\setlist[enumerate]{leftmargin=.5in}
\setlist[itemize]{leftmargin=.5in}
\crefname{hypothesis}{Hypothesis}{Hypotheses}
\Crefname{subsection}{Section}{Sections}
\title{Overlapping Domain Decomposition Preconditioner for Integral Equations 
\thanks{Submitted to the editors on \today.
\funding{
This material is based upon work supported by the U.S. Department of Energy, Office of Science, Office of Advanced Scientific Computing Research, Applied Mathematics program under Award Number DE-SC0019393, by the U.S. Department of Energy, National Nuclear Security Administration Award Number DE-NA0003969; by NIH award 5R01NS042645-11A1; and  by the Portugal Foundation for Science and Technology (FCT) and the UT Austin-Portugal program.  Any opinions, findings, and conclusions or recommendations expressed herein are those of the authors and do not necessarily reflect the views of the   DOE, NSF, and NSF. Computing time on the Texas Advanced Computing Centers Frontera system was provided by an allocation from TACC and the NSF.
}}}
\author{Chao Chen\thanks{University of Texas at Austin, United States (\email{chenchao.nk@gmail.com},\email{biros@oden.utexas.edu}).}
\and George Biros\footnotemark[2]}
\newcommand{\bigO}{ {\mathcal{O}} }
\definecolor{Gray}{gray}{0.9}
\newcolumntype{g}{>{\columncolor{Gray}}c}
\newcolumntype{H}{>{\setbox0=\hbox\bgroup}c<{\egroup}@{}}
\begin{document}
\maketitle

\begin{abstract}
The discretization of certain integral equations, e.g., the first-kind Fredholm equation of Laplace's equation, leads to symmetric positive-definite linear systems, where the coefficient matrix is dense and often ill-conditioned. We introduce a new preconditioner based on a novel overlapping domain decomposition that can be combined efficiently with existing fast direct solvers. Empirically, we observe that the condition number of the preconditioned system is $\bigO(1)$, independent of the problem size. Our domain decomposition is designed so that we can construct approximate factorizations of subproblems efficiently. In particular, we apply the recursive skeletonization algorithm to subproblems associated with every subdomain. We present numerical results on problem sizes up to $16\,384^2$ in 2D and $256^3$ in 3D, which were solved in less than 16 hours and three hours, respectively, on an Intel Xeon Platinum 8280M.
\end{abstract}

\begin{keywords}
  Integral Equations, Overlapping Domain Decomposition, Symmetric Positive-definite Preconditioners, Fast Direct Solvers, Hierarchical Matrices
\end{keywords}

\begin{AMS}
  35R09, 
  65F08, 
  65N55, 
\end{AMS}

\section{Introduction}

Consider the following integral equation (IE)
\begin{equation} \label{e:int}
a(x) u(x) + b(x) \int_{\Omega} K(x-y) c(y) u(y) dy = f(x), \quad x\in\Omega \subset \mathbb{R}^d,
\end{equation}
where $d=2$ or 3, $a(x), b(x), c(x)$ and $f(x)$ are given functions, $K(r) \equiv K(x-y)$ is the Green's function of an  elliptic operator,  $\Omega$ is a finite simply-connected domain, and $u(x)$ is the unknown. Upon discretization of \cref{e:int} through either Galerkin or Nystr\"{o}m methods, we obtain 
\begin{equation} \label{e:axb}
A \, u = f,
\end{equation}
where $u$ and $f$ are the discrete analogues of $u(x)$ and $f(x)$, respectively, and $A \in \mathbb{R}^{N\times N}$ is a \emph{dense} matrix. See an example of the discretization in \Cref{s:setup}.

We further assume the discretized integral operator, $A$ in \cref{e:axb}, is symmetric positive-definite (SPD). This typically occurs when \cref{e:int} is a first-kind Fredholm integral equation for Laplace's equation or the Stokes equation, which has applications in magnetostatics, electrostatics and fluid dynamics. More theory and discussion on symmetric formulations of integral equations (including hypersingular integrals) can be found in~\cite{mund1998two}. One challenge for solving \cref{e:axb} is that $A$ usually has a large condition number~\cite{xing2018preserving,xing2021efficient}, and this paper is concerned with solving \cref{e:axb} iteratively using domain decomposition preconditioners.

\subsection{Previous work}

\begin{figure}
         \centering
         \includegraphics[width=0.45\textwidth]{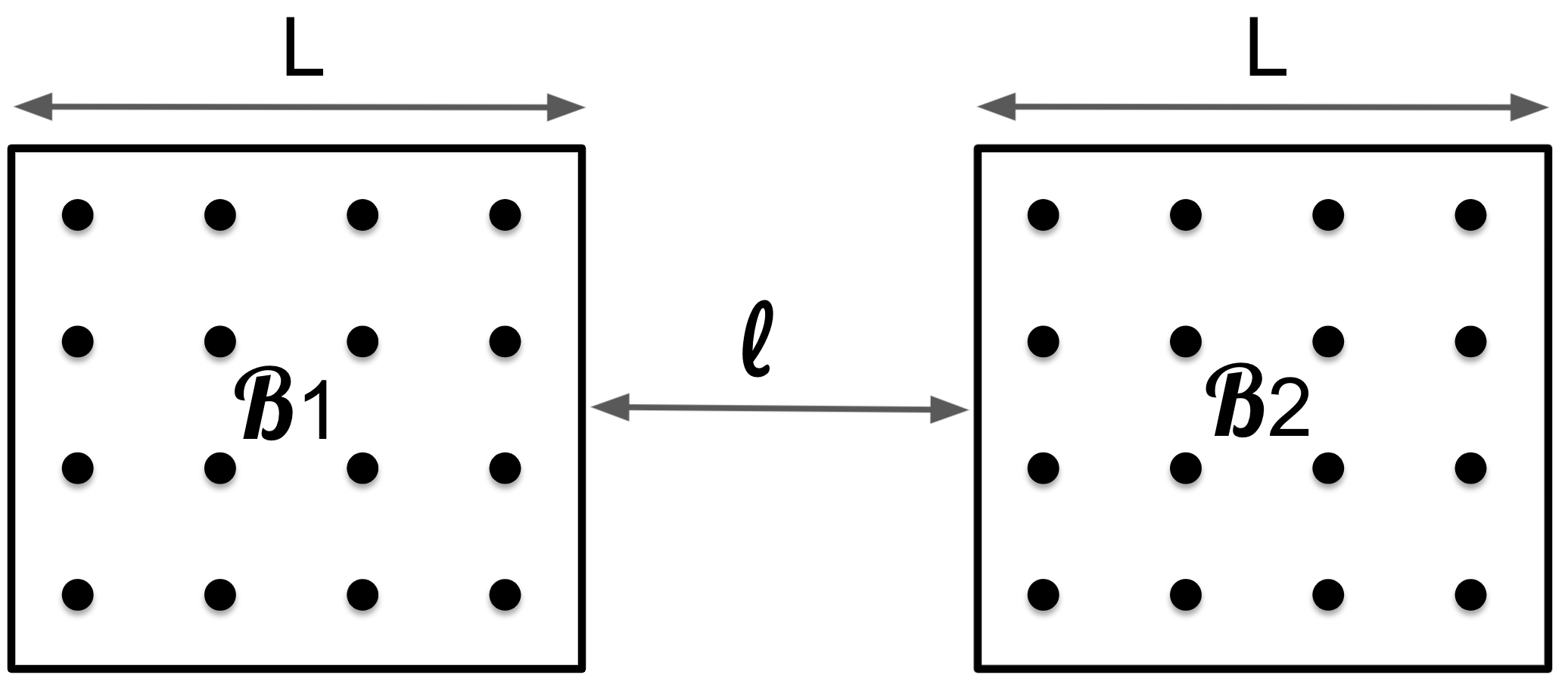}
     \caption{\em Given two sets of points $\mathcal{B}_1$ and $\mathcal{B}_2$ inside two boxes, respectively. If the two boxes are adjacent (weak admissibility), i.e., $\ell = 0$, then the numerical rank of the off-diagonal block $A(\mathcal{B}_1, \mathcal{B}_2)$ is $\bigO(L)$. On the other hand, if the two boxes are \emph{well-separated} (strong admissibility), i.e., $\ell = \bigO(L)$, then  the numerical rank of the off-diagonal block $A(\mathcal{B}_1, \mathcal{B}_2)$ is $\bigO(1)$.}
     \label{f:interaction}
\end{figure}

Since $A$ is a dense matrix, classical direct methods such as Gaussian elimination require $\bigO(N^3)$ operations  and $\bigO(N^2)$ storage. Such costs limit the application of these methods to solving only small problems. 

Although $A$ is a dense matrix, in many applications it is \emph{data sparse} in the sense that the singular values of certain off-diagonal blocks in $A$ decay exponentially fast. This is typically the case for matrices related to integral equation formulations of problems with non-oscillatory coefficients and kernels. The numerical low-rank property is exploited in the fast multipole method  (FMM)~\cite{greengard1987fast,ying2004kernel,martinsson2007accelerated,fong2009black,malhotra2015pvfmm,chen2018fast}, which requires only $\bigO(N)$ operations and storage for applying $A$ to a vector. 
Such a fast matrix-vector multiplication can be coupled with iterative methods such as the conjugate gradient method~\cite{shewchuk1994introduction} to solve \cref{e:axb}. However, the number of iterations can be large when the condition number of $A$ is large. {Such a situation  occurs when \cref{e:int} is a first-kind Fredholm integral equation, i.e., $a(x) = 0$; when $b(x)$ or $c(x)$ exhibits high contrast; or when the problem domain $\Omega$ has a complex geometry.} 



To accelerate the convergence of an iterative method for solving \cref{e:axb}, the block Jacobi preconditioner is arguably the simplest preconditioner but may not be particularly effective. It was observed empirically that the single-level additive Schwarz preconditioner with overlapping blocks  led to better convergence for some realistic engineering applications~\cite{nabors1992fast,nabors1994preconditioned}. Previous work on two-level additive Schwarz preconditioners mainly focused on analyzing the condition number of the preconditioned linear system.
In~\cite{tran2000overlapping} and~\cite{mund1998two}, the authors introduced  two-level  additive Schwarz preconditioners to solve first-kind boundary integral equations for the Laplace's equation on a curve in $\mathbb{R}^2$ and on an open surface in $\mathbb{R}^3$, respectively. 
In~\cite{tran2000overlapping}, overlapping domain decomposition was used, and it was proved that the condition number of the preconditioned linear system is bounded only if the coarse mesh size is proportional to the size of overlap.
In~\cite{mund1998two}, non-overlapping domain decomposition was used, and it was proved that the condition number of the preconditioned linear system still depends on the ratio between the sizes of the coarse mesh and the fine mesh.
 Recently, the authors of~\cite{borges2019domain} introduced  a two-level additive Schwarz preconditioner based on overlapping domain decomposition, which showed great potential in solving {indefinite} dense linear systems from the discretization of the Lippmann-Schwinger equation. For a more detailed discussion on existing preconditioners for solving discretized integral equations, we refer interested readers to~\cite{quaife2015preconditioners} and the references therein.


Fast direct solvers (FDS's) are a class of methods that construct approximate factorizations with a tunable accuracy. When the accuracy is low, they can be used as preconditioners; otherwise with a high accuracy, they behave similarly as a direct method that is suitable for solving multiple right-hand sides. The basic idea of an FDS is to compress certain off-diagonal blocks in $A$ to a prescribed accuracy because they are numerically low rank. This class of methods can be further divided into two groups based on which off-diagonal blocks are compressed or the admissibility criteria; see \cref{f:interaction}.

The first group employs the so-called weak admissibility and compresses off-diagonal blocks corresponding to two spatially adjacent regions. The resulting numerical ranks typically increase proportionally to the perimeter or the surface area of the regions in 2D or 3D, respectively. Consequently, the construction time of these methods typically scale as $\bigO(N^{3/2})$ and   $\bigO(N^{2})$ in 2D and 3D, respectively~\cite{martinsson2005fast,chandrasekaran2006fast,chandrasekaran2007fast,greengard2009fast,xia2010fast,ho2012fast,gillman2012direct,aminfar2016fast,xia2020robust}. Assuming the same rank behavior on the Schur complement, this type of methods can be further accelerated to attain quasilinear complexity~\cite{corona2015n,ho2016hierarchical,xia2021multi}. But existing numerical results seem to indicate that  reaching the asymptotic regime requires a really large problem size in 3D.

The second group employs the so-called strong admissibility and compresses off-diagonal blocks corresponding to two sufficiently distant regions~\cite{ambikasaran2014inverse,coulier2017inverse,minden2017recursive,takahashi2020parallelization,sushnikova2022fmm}. The resulting numerical ranks are constant regardless of the sizes of the regions according to standard fast multipole estimates~\cite{greengard1987fast,greengard1996new}. Consequently, these methods can achieve quasilinear complexity assuming the constant-rank estimates also hold for the Schur complement, which appears be true in practice but lacks firm theoretical support. In practice, the constants in the asymptotic scalings tend to be quite large, especially in 3D, because off-diagonal blocks corresponding to adjacent regions are treated exactly without compression.

\subsection{Contributions}


\begin{figure}
     \centering
     \hfill
     \begin{subfigure}[b]{0.22\textwidth}
         \centering
         \includegraphics[width=\textwidth]{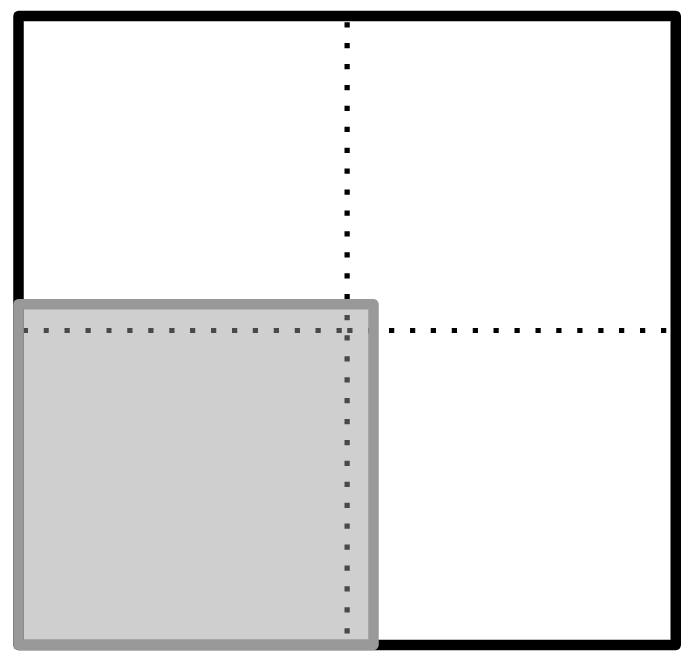}
         \caption{}
         \label{sf:jac_4}
     \end{subfigure}
     \hfill
     \begin{subfigure}[b]{0.22\textwidth}
         \centering
         \includegraphics[width=\textwidth]{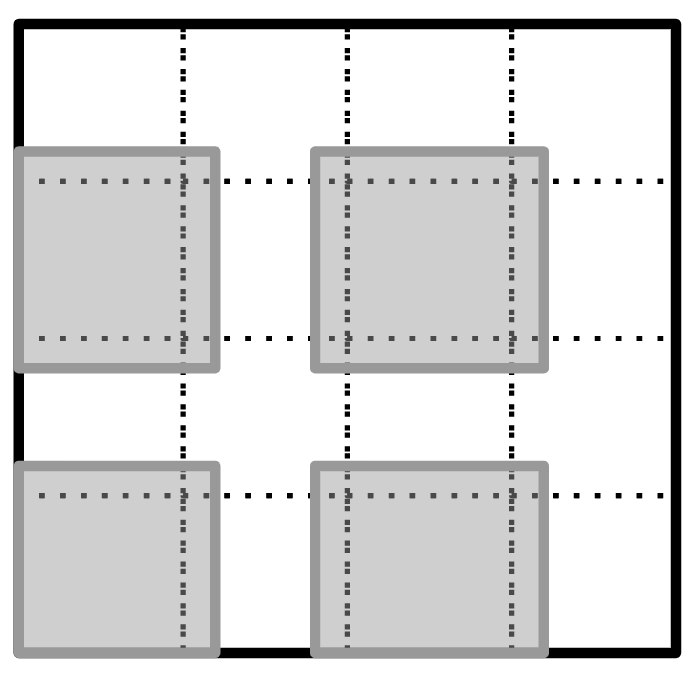}
         \caption{}
         \label{sf:cbd_4}
     \end{subfigure}
     \hfill 
     \begin{subfigure}[b]{0.22\textwidth}
         \centering
         \includegraphics[width=\textwidth]{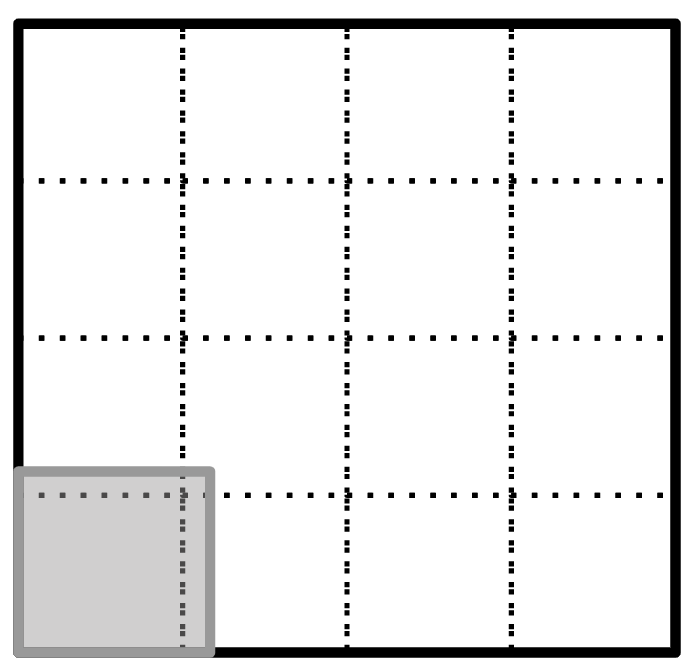}
         \caption{}
         \label{sf:jac_16}
     \end{subfigure}
     \hfill
     \begin{subfigure}[b]{0.22\textwidth}
         \centering
         \includegraphics[width=\textwidth]{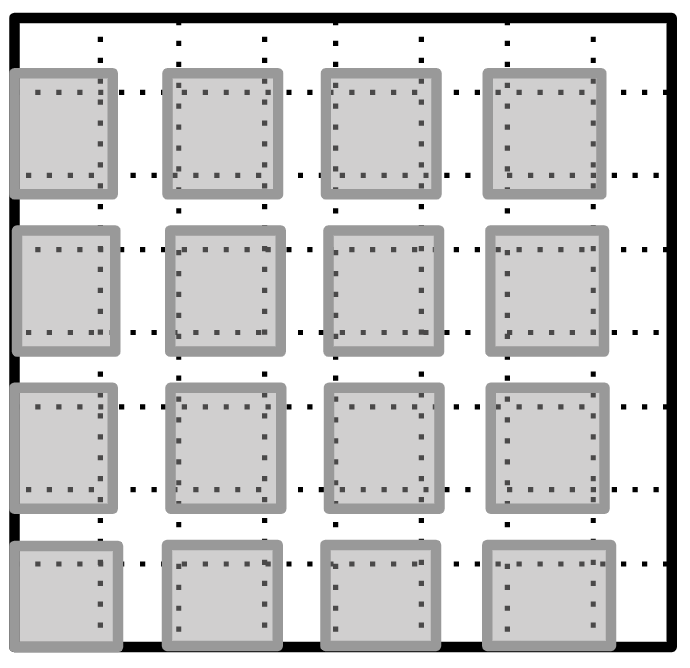}
         \caption{}
         \label{sf:cbd_16}
     \end{subfigure}
     \hfill
     \caption{\em A subdomain (grey) in the Schwarz or the CBD preconditioner. Dashed lines indicate the partitioning $\Omega = \bigcup_{i=1}^M  \mathcal{P}_i$, and every box in grey stands for an extended partition $\tilde{\mathcal{P}}_i$. (a) Schwarz preconditioner with four subdomains. (b) CBD preconditioner with four subdomains, where every subdomain has four separated regions. (c) Schwarz preconditioner with 16 subdomains. (d) CBD preconditioner with \emph{four} subdomains, where every subdomain has 16 separated regions.}
     \label{f:}
\end{figure}



We introduce a new preconditioner based on overlapping domain decomposition to solve \cref{e:axb} iteratively with the preconditioned conjugate gradient (PCG) method. The key feature is that the number of  iterations required is $\bigO(1)$, independent of the problem size. Our method is based on an empirically observation of the single-level additive-Schwarz preconditioner (Schwarz preconditioner $T^{-1}_\text{Schwarz}$ hereafter): the minimum eigenvalue of the preconditioned matrix is lower bounded away from zero. In other words, there exists a positive constant $C>0$ such that $\lambda_{\min}(T^{-1}_\text{Schwarz}A) \ge C$. Our preconditioner is associated with a new strategy for constructing an overlapping domain decomposition that has a fixed number of subdomains, and we apply an FDS as a subdomain solver to construct the preconditioner efficiently. In the following, we illustrate these ideas in more details.

{Let a partitioning (non-overlapping decomposition) of the problem domain be 
\begin{equation}
\Omega = \bigcup_{i=1}^M  \mathcal{P}_i,  \quad M=\bigO(N),
\end{equation}
where $\mathcal{P}_i \cap  \mathcal{P}_j = \emptyset $ if $i\not=j$.}
An overlapping decomposition $\Omega = \bigcup_{i=1}^M  \tilde{\mathcal{P}}_i$ is obtained by extending $\mathcal{P}_i$ to overlap with its neighbors, so that $\mathcal{P}_i \subset \tilde{\mathcal{P}}_i$ and $\mathcal{P}_{j} \cap \tilde{\mathcal{P}}_i \not= \emptyset$ if $\mathcal{P}_i$ and $\mathcal{P}_{j}$ are spatially adjacent. See \cref{sf:jac_4,sf:jac_16} for two examples of the domain decomposition corresponding to $M=4$ and 16, respectively. In the Schwarz preconditioner, every $\tilde{\mathcal{P}}_i$ is a subdomain, and we solve $M$ corresponding subproblems, which are diagonal sub-blocks in $A$  (up to a permutation). The Schwarz preconditioner $T^{-1}_\text{Schwarz}$ can be viewed as a generalization of the block Jacobi preconditioner $T^{-1}_\text{Jacobi}$, where a subproblem is associated with $\mathcal{P}_i$ rather than $\tilde{\mathcal{P}}_i$. 
Numerical results show that $\lambda_{\min}(T^{-1}_\text{Schwarz}A) \ge C > 0$, whereas $\lambda_{\min}(T^{-1}_\text{Jacobi}A)$ decreases toward zero as $N\to\infty$.




The new overlapping domain decomposition that we introduce is a coloring-based decomposition (CBD). 
The key feature of the CBD is that the number of subdomains is fixed, and every subdomain $\Omega_i$ consists of spatially distant $\tilde{\mathcal{P}}_i$'s. See \cref{sf:cbd_4,sf:cbd_16} for two examples of a subdomain in the CBD corresponding to $M=16$ and 64, respectively. 
To compute such a decomposition, we apply graph coloring to the adjacency graph of $\{\mathcal{P}_i\}_{i=1}^M$, so every $\mathcal{P}_i$ (and $\tilde{\mathcal{P}}_i$) is assigned a color $c_i$. Then, a subdomain $\Omega_i$ consists of $\tilde{\mathcal{P}}_k$'s of the same color, i.e., 
\[
{
\Omega_i 
\triangleq \{ \tilde{\mathcal{P}}_k: \text{ the color $c_k$ assigned to $\tilde{\mathcal{P}}_k$ is $i$} \}
= \bigcup_{c_k=i} \tilde{\mathcal{P}}_k. 
}
\]
Given a CBD, we define the corresponding preconditioner (CBD preconditioner hereafter) $T^{-1}_\text{CBD}$
following the same algebraic formulation of the  Schwarz preconditioner except that a subproblem $A_i$ is associated with a subdomain $\Omega_i$ instead of a single $\tilde{\mathcal{P}}_i$. As a result, the diagonal blocks in $A_i$ (with an appropriate permutation) are the subproblems in a related Schwarz preconditioner. In other words, the CBD preconditioner incorporates more information (off-diagonal blocks) from the global problem $A$. So we expect $T^{-1}_\text{CBD}$ to be a better preconditioner than $T^{-1}_\text{Schwarz}$ and thus $\lambda_{\min}(T^{-1}_\text{CBD}A) \ge \lambda_{\min}(T^{-1}_\text{Schwarz}A) \ge C > 0$. Under mild assumptions, the number of subdomains is upper bounded. For example, we have at most four subdomains in 2D and eight subdomains for a uniform partitioning in 3D. This implies that the maximum eigenvalue of the preconditioned matrix is upper bounded. Therefore, the condition number of $T^{-1}_\text{CBD}A$ is bounded; consequently, \cref{e:axb} can be solved in $\bigO(1)$ PCG iterations.


To accelerate the construction and reduce the memory footprint of the CBD preconditioner, we apply an FDS to $A_i$ associated with every subdomain $\Omega_i$. For demonstration, we employ the recursive skeletonization (RS) factorization~\cite{ho2016hierarchical} based on weak admissibility. RS has a relatively simple formulation, and its theoretical complexity is well understood (without any rank assumptions on the Schur complement). If the RS is applied to the global problem $A$ directly, the problem domain $\Omega$ is partitioned at multiple scales, and then the interaction between every pair of adjacent regions is compressed recursively down to the leaf (finest) level. At the leaf level, we have $\Omega = \bigcup_{i=1}^M  \mathcal{P}_i$, where the interaction rank is proportional to the perimeter or the surface area of every $\mathcal{P}_i$ in 2D or 3D, respectively. This type of rank estimation also holds for coarse partitioning. 

Consider a subdomain $\Omega_i$
in the CBD preconditioner, which consists of distant regions at the leaf level. When we apply RS to $A_i$ associated with $\Omega_i$, the separation distance among regions leads to significantly smaller number of skeletons at all scales, compared to the case when these regions are adjacent. In particular, the interaction rank between every pair of regions at the leaf level is provably constant (as in the FMM) because they satisfy the strong admissibility condition. As a result, the construction cost of the CBD preconditioner is much smaller  than that of applying the RS to $A$ directly.

Finally, we clarify that it is not as efficient as the CBD preconditioner if we fix the number of subdomains in the Schwarz preconditioner (problem size per subdomain increases as $\bigO(N)$) and employ the RS as the subdomain solver. The reason is basically the same as above that the interaction rank between a pair of adjacent regions is larger than that between a pair of distant regions. So the cost associated with every subdomain is larger than that in the CBD preconditioner. See more details in \cref{rmk:schwarz}.

\subsection{Outline and notations}

In \Cref{s:setup}, we introduce a model problem and overlapping domain decompositions that we use throughout this paper. In \Cref{s:schwarz}, we define the Schwarz preconditioner $T^{-1}_\text{Schwarz}$ and show the crucial observation that $\lambda_{\min}(T^{-1}_\text{Schwarz}A) \ge C > 0$. In \Cref{s:cbd}, we introduce the CBD preconditioner $T^{-1}_\text{CBD}$, show results on the spectrum of $T^{-1}_\text{CBD}A$, and discuss using the RS to construct $T^{-1}_\text{CBD}$ efficiently. 
In \Cref{s:result}, we present numerical results to demonstrate the performance of the CBD preconditioner, and \Cref{s:end} draws conclusions. 

We adopt the MATLAB notation of submatrices, e.g., $A(\mathcal{I}, :)$ and $A(:, \mathcal{I})$ denote the rows and columns in matrix $A$ corresponding to an index set $\mathcal{I}$, respectively.

\section{Model problem and domain decomposition} \label{s:setup}

In this section, we introduce a model problem and the discretized linear system to be solved.  We also present two strategies for constructing an overlapping domain decomposition
$
\Omega = \bigcup_{i=1}^D \Omega_i,
$
where $\Omega \subset \mathbb{R}^d$ is the problem domain ($d=2$ or 3), $\Omega_i$'s are overlapping subdomains, and $D$ is the number of subdomains.  

For illustration purpose, we assume $\Omega=[0,1]^d$, a uniform discretization grid of size $N=n^d$, and a uniform partitioning of the grid with $M=m^d$ partitions, where $n$ and $m$ are the number of grid points and partitions per dimension, respectively. So every partition has $N/M=(n/m)^d$ grid points. {In this paper, we always assume $M=\bigO(N)$ unless stated otherwise, so the number of points per partition is fixed to be $N/M = \bigO(1)$.}

In \Cref{ss:mp}, we introduce a model problem and the corresponding discretization.
In \Cref{ss:odd}, we introduce the overlapping domain decomposition for the Schwarz preconditioner, where the number of subdomains equals the number of partitions, i.e., $D=M$. In \Cref{ss:cbd}, we introduce the CBD strategy, where the number of subdomains $D=2^d$. These setups are used throughout this paper for numerical results.

\subsection{IE for Laplace's equation} \label{ss:mp}

We focus on a prototypical example of \cref{e:int}, the first-kind volume IE of Laplace's equation in 2D and 3D:
\begin{equation} \label{e:ie}
\int_{\Omega} K(x-y) \, u(y) \, dy = f(x), \quad x \in \Omega=[0,1]^d,
\end{equation}
where $K(r) \equiv K(x-y)$ is the fundamental solution of the free-space Laplace's equation, i.e.,
\begin{equation} \label{e:k}
K(r) = 
\left\{
\begin{array}{cc}
-\frac{1}{2\pi} \log(\|r\|), & d=2, \\
\frac{1}{4\pi \|r\|}, & d=3.
\end{array}
\right.
\end{equation}
Here $u(x)$ is a scalar, and the extension of our approach to cases where  $u(x)$ is a vector (e.g., in the Stokes equation) is straightforward.

For simplicity, we discretize \cref{e:ie} with a piecewise-constant collocation method over a uniform grid as follows.
In 2D, let $h=1/n$ and $x_j = h (j_1-1/2, j_2-1/2)$, where $j$ is the index for an integer pair $(j_1,j_2)$ for $1\le j_1, j_2 \le n$. Using the trapezoidal rule and all $x_j$'s as the collocation points, \cref{e:ie} is discretized into the following linear system
\begin{equation} \label{e:dis}
\sum_{j} A_{ij} \, u_j = f_i
\end{equation}
where $u_j \approx u(x_j)$ is to be solved, $f_i = f(x_i)$ is given, and 
\begin{equation} \label{e:a_2d}
A_{ij} = 
\left\{
\begin{array}{cc}
h^2 \, K(x_i - x_j) ,  &i\not= j, \\
\int_{-h/2}^{h/2} \int_{-h/2}^{h/2} \, K(r) \, dx dy, & i=j,
\end{array}
\right.
\end{equation}
can be evaluated using \cref{e:k} and a proper singular quadrature.

In 3D, let $h=1/n$ and $x_j = h (j_1-1/2, j_2-1/2, j_3-1/2)$, where $j$ is the index for an integer pair $(j_1,j_2,j_3)$ for $1\le j_1, j_2, j_3 \le n$. The same procedure as above leads to 
\begin{equation} \label{e:a_3d}
A_{ij} = 
\left\{
\begin{array}{cc}
h^3 \, K(x_i - x_j) ,  &i\not= j, \\
\int_{-h/2}^{h/2} \int_{-h/2}^{h/2} \int_{-h/2}^{h/2} \, K(r) \, dx dy dz, & i=j.
\end{array}
\right.
\end{equation}

{
Empirically, the condition number of matrix $A$ increases as $\bigO({1}/{h^2})$, where $h$ is the mesh spacing. That is,
\begin{equation} 
\text{cond}(A) = 
\left\{
\begin{array}{cc}
\bigO(N), & d=2, \\
\bigO(N^{2/3}), & d=3.
\end{array}
\right.
\end{equation}
(The same behavior as the discretized Laplace operator.)
}

\subsection{Overlapping domain decomposition} \label{ss:odd}

\begin{figure}
     \centering
     \hfill
     \begin{subfigure}[b]{0.22\textwidth}
         \centering
         \includegraphics[width=\textwidth]{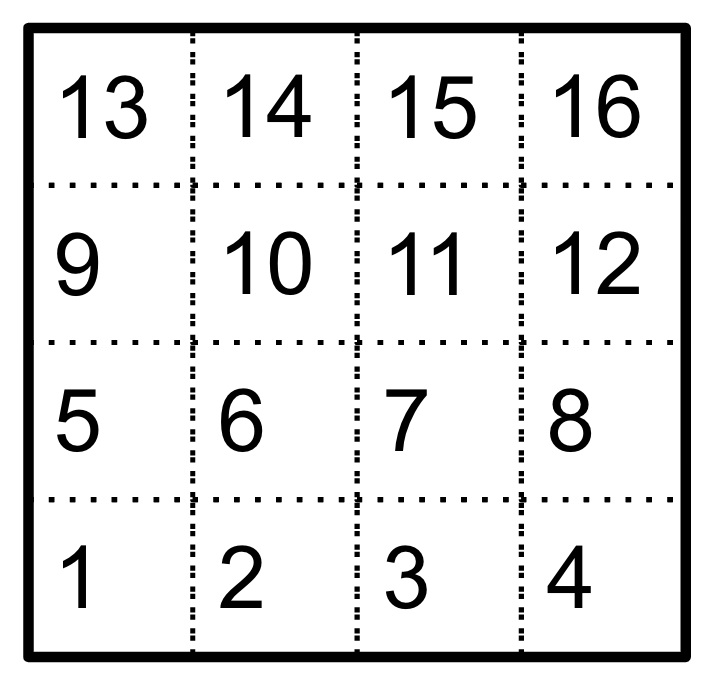}
         \caption{}
         \label{sf:partition}
     \end{subfigure}
     \hfill
     \begin{subfigure}[b]{0.22\textwidth}
         \centering
         \includegraphics[width=\textwidth]{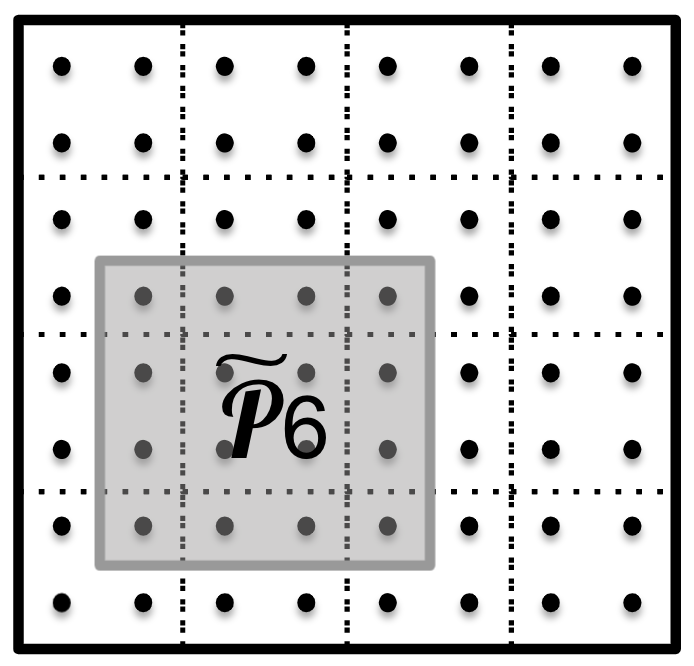}
         \caption{}
         \label{sf:extension}
     \end{subfigure}
     \hfill
     \begin{subfigure}[b]{0.22\textwidth}
         \centering
         \includegraphics[width=\textwidth]{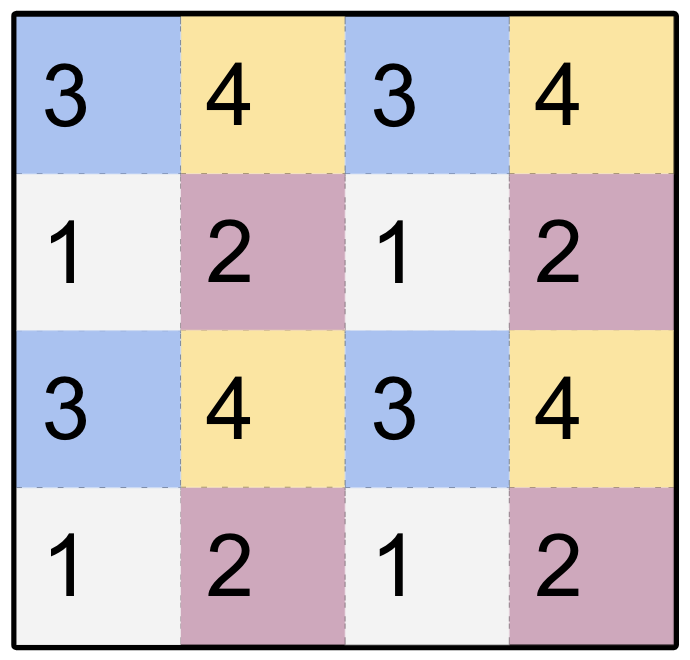}
         \caption{}
         \label{sf:coloring}
     \end{subfigure}
     \hfill
     \begin{subfigure}[b]{0.22\textwidth}
         \centering
         \includegraphics[width=\textwidth]{c}
         \caption{}
         \label{sf:subdomain}
     \end{subfigure}
     \hfill
     \caption{\em (a) A uniform partitioning of the unit square $\Omega = \bigcup_{i=1}^{16}  \mathcal{P}_i$. (b) A uniform grid with $8^2$ points and an extended partition $\tilde{\mathcal{P}}_6$ (shaded) containing $4^2$ points. (c) A four-coloring of the partitioning. (d) A subdomain $\Omega_1$ (shaded) with four spatially separated regions in CBD.}
     \label{f:part}
\end{figure}

Let a uniform partitioning or non-overlapping domain decomposition of the problem domain $\Omega=[0,1]^d$ be 
\begin{equation} \label{e:dd}
\Omega = \bigcup_{i=1}^M  \mathcal{P}_i,
\end{equation}
where $M=\bigO(N)$, and $\mathcal{P}_i \cap  \mathcal{P}_j = \emptyset \text{ if } i\not=j$; see \cref{sf:partition} for an example.
An overlapping domain decomposition
$
\Omega = \bigcup_{i=1}^M  \tilde{\mathcal{P}}_i,
$ 
follows by extending $\mathcal{P}_i$ to overlap with all of its spatially adjacent partitions $\mathcal{P}_j$, i.e., $\mathcal{P}_i \subset \tilde{\mathcal{P}}_i$ and $\mathcal{P}_j \cap \tilde{\mathcal{P}}_i \not= \emptyset$.  In particular, we extend every partition $\mathcal{P}_i$ to include one extra layer of grids in every direction (when possible) to form $\tilde{\mathcal{P}}_i$; see \cref{sf:extension} for an example. Except for those extended partitions near the boundary, every $\tilde{\mathcal{P}}_i$ has $(n/m+2)^d$ nodes given a uniform partitioning of a uniform grid.

Our first overlapping domain decomposition is simply taking $\Omega_i = \tilde{\mathcal{P}}_i$ as a subdomain. Hence, we get
\begin{equation} \label{e:first}
\Omega = \bigcup_{i=1}^M  \tilde{\mathcal{P}}_i = \bigcup_{i=1}^D \Omega_i,
\end{equation}
where $D=M=\bigO(N)$ is the number of subdomains. {(The right-hand side is the union of overlapping subsets of $\Omega$, which quals the entire set $\Omega$.)} In \Cref{s:schwarz}, \cref{e:first} and \cref{e:dd} are used to construct the Schwarz preconditioner and the block Jacobi preconditioner, respectively. In the block Jacobi preconditioner, $\mathcal{P}_i$ is treated as a subdomain.



\subsection{Coloring-based decomposition} \label{ss:cbd}

Given the partitioning in \cref{e:dd}, we apply graph coloring to the adjacency graph of $\{ \mathcal{P}_i \}_{i=1}^M$, where an edge between $\mathcal{P}_i$ and $\mathcal{P}_j$ exists if the two partitions are spatially adjacent. As a result, every partition $\mathcal{P}_i$ (and its extension $\tilde{\mathcal{P}}_i$) is assigned a color $c_i$ in $\{ 1, 2, \ldots, N_c \}$, where $N_c$ is the number of colors; see \cref{sf:coloring} for an example. With the overlapping domain decomposition in \cref{e:first}, we form a subdomain $\Omega_i$ as
\begin{equation} \label{e:cbd}
{
\Omega_i 
= \{ \tilde{\mathcal{P}}_k: \text{ the color $c_k$ assigned to $\tilde{\mathcal{P}}_k$ is $i$} \}
= \bigcup_{c_k=i} \tilde{\mathcal{P}}_k, \quad i=1,2,\ldots,N_c;
}
\end{equation}
see \cref{sf:subdomain} for an example. It is easy to see
\begin{equation} \label{e:second}
\Omega = \bigcup_{i=1}^M  \tilde{\mathcal{P}}_i = \bigcup_{i=1}^{N_c} \left( \bigcup_{c_k=i} \tilde{\mathcal{P}}_k \right) = \bigcup_{i=1}^{D} \Omega_i,
\end{equation}
where $D=N_c$ is the number of subdomains. We call \cref{e:second} a coloring-based decomposition (CBD), which  is used in \Cref{s:cbd} to construct the CBD preconditioner.
If the number of partitions $M=2^d$ in \cref{e:dd}, then \cref{e:second} reduces to \cref{e:first} because every subdomain (color) has only one partition.

According to the four color theorem~\cite{appel1977every}, we know $N_c = 4$ in 2D. For a general graph in 3D, $N_c$ can be arbitrarily large  but  is  usually  small  for  practical  problems  that  have  some regularity.  For example,  $N_c=8$ for a uniform partitioning of a uniform grid. To summarize, we state the following:

\begin{theorem}
For a uniform partitioning of a uniform grid over $\Omega = [0,1]^d$, the number of subdomains (colors)  is $2^d$ in the CBD.
\end{theorem}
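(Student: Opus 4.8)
The plan is to show that a uniform partitioning of a uniform grid on $[0,1]^d$ produces an adjacency graph whose chromatic number is exactly $2^d$, since the number of colors $N_c$ equals the number of subdomains $D$ by construction in \cref{e:second}. The argument splits naturally into an upper bound (a valid $2^d$-coloring exists) and a lower bound (no coloring with fewer colors is possible). First I would fix the precise adjacency relation: with $M=m^d$ partitions indexed by lattice coordinates $(p_1,\dots,p_d)$ with $1\le p_k\le m$, two partitions are spatially adjacent when their index vectors differ by at most $1$ in every coordinate (i.e. they share a face, edge, or corner), so each interior partition has $3^d-1$ neighbors. This is the $d$-dimensional king-graph adjacency, and pinning it down is the first necessary step because the whole count depends on whether diagonal (corner) contact counts as adjacency.

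For the upper bound I would exhibit an explicit coloring by parity of the coordinates: assign to partition $(p_1,\dots,p_d)$ the color determined by the tuple $(p_1 \bmod 2,\dots,p_d \bmod 2)$, which takes $2^d$ distinct values. I would then verify this is proper: if two partitions are adjacent, their index vectors differ by at most $1$ in each coordinate and by at least $1$ in some coordinate, so in that coordinate one index is even and the other odd, forcing their parity tuples to differ, hence their colors differ. This shows $N_c\le 2^d$.

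For the lower bound I would locate a clique of size $2^d$, since the chromatic number is at least the clique number. The natural candidate is any $2^d$ block of partitions occupying a $2\times2\times\cdots\times2$ subcube of the index lattice — for instance all $(p_1,\dots,p_d)$ with each $p_k\in\{1,2\}$. Any two partitions in this set have index vectors differing by at most $1$ in every coordinate, so they are pairwise adjacent, forming a clique on $2^d$ vertices; each vertex therefore needs its own color, giving $N_c\ge 2^d$. Combining the two bounds yields $N_c=2^d$, and the theorem follows. (A mild degenerate case: this requires $m\ge 2$ so that such a $2\times\cdots\times2$ block exists; for $m=1$ there is a single partition and trivially one color, consistent with the remark preceding the theorem that $M=2^d$ collapses the CBD to the single-partition situation.)

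I expect the main obstacle to be purely one of rigor rather than difficulty: making the adjacency definition airtight so that the parity coloring is unambiguously proper and the $2^d$-clique is unambiguously complete. In particular, the argument hinges entirely on diagonal contact counting as adjacency; if only face-adjacency were used the answer would instead be $2$ (a checkerboard two-coloring), so the crux is justifying from the overlapping construction of $\tilde{\mathcal{P}}_i$ that corner-touching partitions do overlap after the one-layer extension and must therefore receive distinct colors. Once that geometric fact is established, both the coloring and the clique are immediate, and no nontrivial estimate is required.
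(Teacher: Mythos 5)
Your proof is correct. Note that the paper itself states this theorem without any proof, so there is nothing to compare against on the paper's side; your parity coloring $(p_1 \bmod 2,\dots,p_d \bmod 2)$ is exactly the coloring depicted in \cref{sf:coloring,sf:subdomain}, and the $2\times\cdots\times 2$ index block gives the matching clique lower bound. You have also correctly identified the one point that genuinely needs to be pinned down, namely that ``spatially adjacent'' must be read as king-graph adjacency (corner contact included): this is forced by the construction of $\tilde{\mathcal{P}}_i$ as a one-layer extension in \emph{every} direction, since diagonally touching partitions have overlapping extensions and hence cannot share a color if the regions within a subdomain are to be well separated. With that reading fixed, your upper and lower bounds are airtight and the theorem follows.
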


As a summary of \Cref{ss:odd,ss:cbd}, \cref{t:dd} shows the three preconditioners to be introduced associated with \cref{e:dd,e:first,e:second}.

\begin{table}
    \centering \small 
    \caption{\em Given a domain decomposition $\Omega = \bigcup_{i=1}^{D} \Omega_i$, the definition of a subdomain $\Omega_i$ leads to different preconditioners including the block Jacobi preconditioner (\Cref{s:schwarz}), the Schwarz preconditioner (\Cref{s:schwarz}), and the CBD preconditioner (\Cref{s:cbd}).}
    \label{t:dd}
    \begin{tabular}{cccc} 
    \toprule
    & Jacobi & Schwarz & CBD \\ \midrule 
    $\Omega_i$ & $\mathcal{P}_i$ & $\tilde{\mathcal{P}}_i$ & $\bigcup_{c_k=i} \tilde{\mathcal{P}}_k$ \\
    \bottomrule
    \end{tabular}
\end{table}


\section{Schwarz preconditioner} \label{s:schwarz}

Given a domain decomposition $\Omega = \bigcup_{i=1}^D \Omega_i$ (not necessarily an overlapping domain decomposition), we construct a preconditioner as follows. Let $\mathcal{I}_i$ denote the indices of the discretization points in $\Omega_i$. First, we define the restriction operator for every subdomain $\Omega_i$:
\[
R_i = I_N(\mathcal{I}_i,:),
\]
a subset of rows in the identity matrix $I_N \in \mathbb{R}^{N \times N}$ corresponding to indices $\mathcal{I}_i$. 
Then, we define the subproblem associated with $\Omega_i$:
\begin{equation} \label{e:ai}
A_i = R_i A R_i^\top.
\end{equation}
Finally, we define the preconditioner:
\begin{equation} \label{e:ad}
T^{-1} = \sum_{i=1}^D R_i^\top A_i^{-1} R_i.
\end{equation}
With the overlapping domain decompositions \cref{e:first,e:second}, we obtain the Schwarz preconditioner (technically, single-level additive Schwarz preconditioner) and the CBD preconditioner (\Cref{s:cbd}). With the non-overlapping domain decomposition \cref{e:dd}, we obtain the block Jacobi preconditioner.

In the following, we focus on the Schwarz preconditioner $T^{-1}_\text{Schwarz}$, derive a theorem on $\lambda_{\max}(T^{-1}_\text{Schwarz}A)$, and show empirical results on $\lambda_{\min}(T^{-1}_\text{Schwarz}A)$. We also compare the Schwarz preconditioner to the block Jacobi preconditioner.

\subsection{Maximum and minimum eigenvalues}

Consider the preconditioned matrix
\begin{equation} \label{e:ta}
T^{-1}A = \sum_{i=1}^D R_i^\top A_i^{-1} R_i \, A.
\end{equation}
We define 
\begin{equation} \label{e:pi}
P_i = R_i^\top A_i^{-1} R_i A,
\end{equation}
and we can verify that $P_i$ is an orthogonal projection with respect to the inner product defined by the SPD matrix $A$:
\[
P_i^2 = P_i, \quad AP_i = P_i^\top A.
\] 
As a result, we have
\[
\lambda_{\max}(P_i) = 1,
\]
which immediately leads to the following:

\begin{theorem} 
\label{th:max}
Let $A$ be an SPD matrix and  the preconditioner  $T^{-1}$ defined in \cref{e:ad}. Then, 
\[
\lambda_{\max}(T^{-1}A) \le  D,
\]
where $D$ is the number of subdomains.
\end{theorem}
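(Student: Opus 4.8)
The plan is to exploit the identity $T^{-1}A = \sum_{i=1}^D P_i$, which follows directly from \cref{e:ta,e:pi}, together with the two facts already established in the excerpt: each $P_i$ is idempotent ($P_i^2 = P_i$) and $A$-self-adjoint ($AP_i = P_i^\top A$). The guiding idea is to work in the inner product $\langle x,y\rangle_A \triangleq x^\top A y$ induced by the SPD matrix $A$; in this inner product $T^{-1}A$ is self-adjoint, and the bound on $\lambda_{\max}$ follows from a Rayleigh-quotient estimate.

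First I would record that each $P_i$ is self-adjoint with respect to $\langle\cdot,\cdot\rangle_A$, since the relation $AP_i = P_i^\top A$ is precisely $\langle P_i x, y\rangle_A = \langle x, P_i y\rangle_A$ for all $x,y$. Combined with $P_i^2 = P_i$, this makes $P_i$ an orthogonal projection in the $A$-inner product, so its eigenvalues lie in $\{0,1\}$ and hence $0 \le \langle P_i x, x\rangle_A \le \langle x,x\rangle_A$ for every $x$. Summing over $i$ and using $T^{-1}A = \sum_{i=1}^D P_i$ gives
\[
0 \le \big\langle (T^{-1}A)\, x,\, x\big\rangle_A = \sum_{i=1}^D \langle P_i x, x\rangle_A \le D\,\langle x,x\rangle_A.
\]

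Finally I would pass from this Rayleigh-quotient bound to a statement about eigenvalues. Since $T^{-1}A$ is a sum of $A$-self-adjoint operators it is itself $A$-self-adjoint, hence diagonalizable with real eigenvalues, each attained as a Rayleigh quotient $\langle (T^{-1}A)x,x\rangle_A/\langle x,x\rangle_A$; the displayed inequality then forces every eigenvalue into $[0,D]$, giving $\lambda_{\max}(T^{-1}A)\le D$. The one delicate point — and the step I expect to require the most care — is exactly this passage, because $T^{-1}A$ is not symmetric in the Euclidean sense. The cleanest way to make it rigorous is the congruence $A^{1/2}(T^{-1}A)A^{-1/2} = A^{1/2}T^{-1}A^{1/2}$, a genuinely symmetric matrix sharing the spectrum of $T^{-1}A$, whose maximum eigenvalue equals its maximum standard Rayleigh quotient; the computation above bounds that quotient by $D$. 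Everything else is routine.
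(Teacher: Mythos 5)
Your proof is correct and follows essentially the same route as the paper's: the paper simply writes $\lambda_{\max}(T^{-1}A) \le \sum_{i=1}^D \lambda_{\max}(P_i) = D$, relying on the previously stated fact that each $P_i$ is an $A$-orthogonal projection, and your argument just makes that one-liner rigorous via the $A$-inner-product Rayleigh quotient and the symmetrization $A^{1/2}T^{-1}A^{1/2}$. No gaps.
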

\begin{proof}
$\lambda_{\max}(T^{-1}A) \le \sum_{i=1}^D \lambda_{\max}(P_i) = D$.
\end{proof}
Note the theorem applies to both the block Jacobi and the Schwarz preconditioners because it only requires a decomposition of the problem domain (not necessarily an overlapping decomposition); see numerical results in \cref{t:schwarz}. The table also shows that as the problem size $N$ increases, $\lambda_{\min}(T^{-1}_{\text{Schwarz}} A)$ appears to converge to a positive constant close to one, whereas $\lambda_{\min}(T^{-1}_{\text{Jacobi}} A)$ is much smaller and keeps decreasing. We state the following (and provide more concrete evidence in \Cref{ss:jacobi}):

\begin{conjecture} 
\label{conjecture}
Let $A$ be the discretized integral operator in \cref{e:a_2d} or \cref{e:a_3d}. Given the overlapping domain decomposition \cref{e:first}, the Schwarz preconditioner $T^{-1}_{\text{Schwarz}}$ is defined in \cref{e:ad}. Then,
\[
\lambda_{\min}(T^{-1}_{\text{Schwarz}} A) \ge C > 0,
\]
where $C$ is a positive constant.
\end{conjecture}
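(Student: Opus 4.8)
The plan is to cast the bound in the abstract additive Schwarz framework and reduce it to a single \emph{stable-splitting} estimate that is uniform in $N$. Recall from \cref{e:pi} that $P_i = R_i^\top A_i^{-1} R_i A$ is an $A$-orthogonal projection onto $\operatorname{range}(R_i^\top)$ and that $T^{-1}_{\text{Schwarz}} A = \sum_{i=1}^M P_i$. The standard lower bound for a sum of such projections (Lions' lemma) states that
\[
\lambda_{\min}(T^{-1}_{\text{Schwarz}} A) \ge C_0^{-2}, \qquad
C_0^2 = \sup_{u\neq 0}\;\inf_{\substack{u=\sum_i w_i\\ \operatorname{supp}(w_i)\subset\tilde{\mathcal{P}}_i}}\;\frac{\sum_{i=1}^M \|w_i\|_A^2}{\|u\|_A^2},
\]
where $\|v\|_A^2 = v^\top A v$ and each $w_i$ is the zero-extension to $\Omega$ of a vector supported on $\tilde{\mathcal{P}}_i$. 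Thus it suffices to exhibit, for every $u$, a decomposition into local pieces whose energies sum to at most a constant times $\|u\|_A^2$, with the constant independent of $N$ (equivalently of $h$). Combined with \cref{th:max}, a uniform $C_0$ yields a bounded condition number.

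For the decomposition I would use a smooth partition of unity $\{\theta_i\}$ subordinate to the overlapping cover $\{\tilde{\mathcal{P}}_i\}$, with $\sum_i\theta_i\equiv 1$, $0\le\theta_i\le 1$, $\operatorname{supp}(\theta_i)\subset\tilde{\mathcal{P}}_i$, and $\theta_i$ varying across the one-layer overlap region of width $\delta=h$. Setting $w_i=\diag(\theta_i)\,u$ gives a valid splitting $u=\sum_i w_i$ with the correct supports. The first step is to replace the discrete energy norm by a mesh-independent surrogate: since $A$ discretizes the first-kind (single-layer) operator, a pseudodifferential operator of order $-1$, its energy is spectrally equivalent to a negative-order Sobolev norm, $\|v\|_A^2 \sim \|v\|_{H^{-1/2}}^2$, uniformly in $h$. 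Establishing this equivalence (including the discrete-to-continuous transfer and the near-boundary truncated subdomains) is routine but must be carried out with $h$-uniform constants. The problem then reduces to the single inequality
\[
\sum_{i=1}^M \|\theta_i\,u\|_{H^{-1/2}}^2 \;\le\; C\,\|u\|_{H^{-1/2}}^2 .
\]

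The reason to expect \cref{conjecture} to hold lies in the \emph{sign} of the operator order. Because the single-layer operator is smoothing, its energy is a \emph{weak} norm that downweights high frequencies; the modes that are expensive to represent locally for a positive-order problem---the smooth, near-constant modes responsible for the classical $\bigO(H^{-2})$ deterioration of single-level methods---are precisely the \emph{cheap}, high-energy modes here, whereas the oscillatory modes that dominate $\lambda_{\min}$ are already essentially local and split stably. Concretely, at $s=0$ the estimate is immediate, $\sum_i\|\theta_i u\|_{L^2}^2=\int(\sum_i\theta_i^2)|u|^2\le\|u\|_{L^2}^2$ from $\sum_i\theta_i^2\le\sum_i\theta_i=1$ and the $\bigO(1)$ overlap, and passing to the negative-order norm should only help for the high-frequency content created by cutting.

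The hard part will be controlling the cutoff-derivative (commutator) terms uniformly as $\delta=h\to 0$. Multiplication by a single cutoff oscillating on scale $\delta$ has operator norm $\sim\delta^{-1/2}$ on $H^{-1/2}$, so a term-by-term bound is useless; the gain must come entirely from summing over the partition of unity together with the finite-overlap property, exploiting that the adversarial high-frequency input for one $\theta_i$ cannot be adversarial for its neighbors simultaneously. I would attack this by duality, writing $\|\theta_i u\|_{H^{-1/2}}=\sup_{v}\langle u,\theta_i v\rangle/\|v\|_{H^{1/2}}$ and controlling $\sum_i\|\theta_i v\|_{H^{1/2}}^2$, or by a Littlewood--Paley splitting of $u$ at frequency $\delta^{-1}$ that estimates the low- and high-frequency regimes separately. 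This is exactly the regime---single level (no coarse space), minimal overlap $\delta=h$, and $\bigO(1)$-sized subdomains ($H=\bigO(h)$)---that falls outside the existing two-level analyses of~\cite{tran2000overlapping,mund1998two}, which required the overlap (or coarse mesh) to be comparable to the subdomain size; removing that requirement uniformly in $N$ is the crux, and is why we can presently offer only empirical evidence.
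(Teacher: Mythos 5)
The statement you are trying to prove is stated in the paper as a \emph{conjecture}: the authors offer no proof, only the empirical evidence in \cref{t:schwarz} (where $\lambda_{\min}(T^{-1}_{\text{Schwarz}}A)$ appears to converge to a constant near one) together with the heuristic eigenvector analysis of \Cref{ss:jacobi} (the ``dipole'' picture and \cref{th:jacobi} for the 1D Jacobi case). So there is no paper proof to compare against, and the honest question is whether your argument closes the gap. It does not, and you say so yourself: the entire difficulty is concentrated in the stable-splitting inequality $\sum_i\|\theta_i u\|_{H^{s}}^2\le C\|u\|_{H^{s}}^2$ with negative $s$, minimal overlap $\delta=h$, and subdomains of size $H=\bigO(h)$, and you only name two possible attacks (duality, Littlewood--Paley) without carrying either out. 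The reduction via Lions' lemma to the splitting constant $C_0$ is correct and is the standard framework (it is the same one used in the two-level analyses of the papers the authors cite), but as you note those analyses require the overlap or coarse mesh to be comparable to the subdomain size, which is exactly the hypothesis that fails here. A term-by-term commutator bound loses a factor $\delta^{-|s|}$ per subdomain, and no mechanism for recovering this loss by summation is actually established. So the proposal is a correct and well-organized \emph{reduction} of the conjecture to a single open inequality, not a proof of it.

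One concrete error worth fixing: the model problem \cref{e:ie} is a \emph{volume} integral equation on $\Omega=[0,1]^d$ with the Newtonian kernel, i.e.\ a pseudodifferential operator of order $-2$, not the single-layer operator of order $-1$ on a codimension-one surface. The natural energy norm is therefore equivalent to $H^{-1}(\Omega)$, not $H^{-1/2}$; this is consistent with the paper's observation that $\mathrm{cond}(A)=\bigO(h^{-2})$, matching the discretized Laplacian. The change of index does not alter the structure of your argument, but it worsens the naive commutator loss (from $\delta^{-1/2}$ to $\delta^{-1}$ per cutoff), so it matters for any quantitative attempt. If you want to strengthen the case for the conjecture short of a proof, the most useful partial result along your lines would be to verify the splitting estimate for the Fourier symbol $|\xi|^{-2}$ on the torus with a uniform grid of cutoffs, where the finite-overlap cancellation you are hoping for can be tested explicitly.
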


\begin{table}
    \caption{\em Maximum and minimum eigenvalues of the preconditioned matrix \cref{e:ta}. The block Jacobi and the Schwarz preconditioners are defined in \cref{t:dd}. The number of subdomains $D=\bigO(N)$.
    }
    \label{t:schwarz}
    \begin{subtable}[h]{\textwidth}
    \centering
    \begin{tabular}{cc|cc|cc} 
    \toprule
      \multirow{2}{*}{$N$}  &  \multirow{2}{*}{$D$}  
      & \multicolumn{2}{c|}{Jacobi}  & \multicolumn{2}{c}{Schwarz}  \\
      && $\lambda_{\max}$ & $\lambda_{\min}$ & $\lambda_{\max}$ & $\lambda_{\min}$  \\ \midrule 
      $8^2$     & $2^2$       & 2.8479     & 0.1695 & 4.0000    & 0.8209       \\
      $16^2$   & $4^2$       & 6.6883     & 0.0804 & 8.8046    & 0.9112       \\
      $32^2$   & $8^2$       & 19.3756     & 0.0533 & 23.5948    & 0.9350        \\
      $64^2$   & $16^2$       & 61.6629     & 0.0409 & 71.5192    & 0.9360       \\
      $128^2$ & $32^2$       & 205.7705     & 0.0336 & 231.7813    & 0.9331       \\
    \bottomrule
    \end{tabular}
    \caption{\em Discretized integral operator in 2D; see \cref{e:a_2d}.}
    \end{subtable}
    \begin{subtable}[h]{\textwidth}
    \centering
    \begin{tabular}{cc|cc|cc} 
    \toprule
      \multirow{2}{*}{$N$}  &  \multirow{2}{*}{$D$}  
      & \multicolumn{2}{c|}{Jacobi} & \multicolumn{2}{c}{Schwarz} \\
      & &  $\lambda_{\max}$  & $\lambda_{\min}$  & $\lambda_{\max}$  & $\lambda_{\min}$  \\ \midrule 
      $4^3$     & $2^3$       & 4.0618     & 0.2602 & 8.0000    & 0.9750      \\
      $8^3$     & $4^3$       & 15.4234     & 0.2056 & 33.1198    & 0.9942      \\
      $16^3$   & $8^3$       & 60.9327     & 0.1916 & 134.7002    & 0.9967        \\
      $32^3$   & $16^3$       & 242.9799     & 0.1878 & 547.3752    & 0.9972       \\
    \bottomrule
    \end{tabular}
    \caption{\em Discretized integral operator in 3D; see \cref{e:a_3d}.}
    \end{subtable}
\end{table}

\begin{remark}
If $A$ is a \emph{sparse} SPD matrix arising from a local discretization (e.g., by finite elements or finite differences) of a second-order self-adjoint, coercive elliptic problem, then the upper bound in \cref{th:max} can be reduced to a constant, independent of the number of subdomains $D$~\cite{chan1994domain,dolean2015introduction}. The reason is that $P_i + P_j$ is still an orthogonal projection when $\Omega_i \cap \Omega_j = \emptyset$ (because $R_i A R_j^\top = 0$ when $A$ is sparse).
\end{remark}

\begin{remark}
In the context of solving PDEs, the (single-level additive) Schwarz preconditioner results in a bounded maximum eigenvalue of the preconditioned matrix, whereas the minimum eigenvalue goes to zero as $M=\bigO(N)\to\infty$.
\end{remark}

\subsection{Analysis for $D=2^d$} \label{ss:jacobi}


In this section, we assume the number of subdomains $D=2^d$ and study the maximum and minimum  eigen-pairs of $T^{-1}_{\text{Schwarz}} A$.  The analysis of the maximum eigen-pair  prepares for the corresponding analysis for $T^{-1}_{\text{CBD}} A$ in \Cref{ss:spectral}. The observations for the minimum eigen-pair provide insights and evidence for \cref{conjecture}. We also compare the Schwarz preconditioner to the block Jacobi preconditioner, for which \cref{conjecture} does not hold. 

Recall $P_i$ defined in \cref{e:pi} is a projection associated with subdomain $\Omega_i$, so we have
\begin{lemma} \label{l:subdomain}
Let $x \in \mathbb{R}^N$ be nonzero only in $\Omega_i$, i.e., $x = R_i^\top R_i \, x$. Then, 
\[
P_i  \,  x = x.
\]
\end{lemma}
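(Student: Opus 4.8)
The plan is to verify the identity by direct substitution, exploiting two elementary algebraic facts about the restriction operator: first, that $R_i$ has orthonormal rows, so $R_i R_i^\top = I$ since $R_i$ is a row-subset of the identity $I_N$; and second, that $A_i = R_i A R_i^\top$ is invertible, which holds because $A$ is SPD and $R_i$ has full row rank, making $A_i$ SPD as well.

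First I would reformulate the hypothesis. Since $x = R_i^\top R_i\, x$, setting $y = R_i x$ gives $x = R_i^\top y$; conversely, applying $R_i$ to both sides and using $R_i R_i^\top = I$ confirms $R_i x = y$, so the two descriptions of $x$ are consistent. Intuitively, this just says that a vector supported in $\Omega_i$ is the prolongation $R_i^\top$ of its own restriction.

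Next I would substitute this into the definition $P_i = R_i^\top A_i^{-1} R_i A$ from \cref{e:pi} and regroup the factors:
\[
P_i x = R_i^\top A_i^{-1} R_i A \, (R_i^\top y) = R_i^\top A_i^{-1} (R_i A R_i^\top)\, y = R_i^\top A_i^{-1} A_i\, y = R_i^\top y = x,
\]
where the third equality uses the definition $A_i = R_i A R_i^\top$ from \cref{e:ai}, the fourth uses $A_i^{-1} A_i = I$, and the last returns to $x = R_i^\top y$ from the reformulated hypothesis.

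There is no genuine obstacle here; the conclusion follows from a single telescoping cancellation, and the lemma is really a bookkeeping statement about the restriction and prolongation operators. The only point worth flagging is that $A_i^{-1}$ must exist for the computation to be meaningful, which is guaranteed since $A$ is SPD, forcing $A_i = R_i A R_i^\top$ to be SPD. The result is precisely the algebraic content of the earlier remark that $P_i$ is the $A$-orthogonal projection onto the range of $R_i^\top$: it acts as the identity on vectors already supported in $\Omega_i$.
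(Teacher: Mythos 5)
Your proof is correct and follows essentially the same route as the paper: substitute $x = R_i^\top R_i x$ into the definition of $P_i$, regroup so that $R_i A R_i^\top = A_i$ appears, and cancel against $A_i^{-1}$. The extra remarks on the invertibility of $A_i$ and on $R_i R_i^\top = I$ are sound but not needed beyond what the paper's one-line computation already uses.
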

\begin{proof}
With the definitions of $A_i$ and $P_i$ in \cref{e:ai,e:pi}, respectively, we have
\[
P_i  \, x = (R_i^\top A_i^{-1} R_i A) (R_i^\top R_i  \, x) = R_i^\top A_i^{-1} (R_i A R_i^\top) R_i  \, x = R_i^\top R_i  \, x = x.
\]
\end{proof}
This lemma is useful for analyzing eigenvectors of the preconditioned matrix. The following theorem addresses the maximum eigenvalue and the associated eigenvector of $T^{-1}_{\text{Schwarz}} A$.
\begin{theorem} 
\label{th:max48}
Suppose the problem domain $\Omega = [0,1]^d$ is partitioned uniformly into $D=2^d$ overlapping subdomains. 
Given the overlapping domain decomposition \cref{e:first}, we have
\[
\lambda_{\max}(T^{-1}_{\text{Schwarz}} A)=2^d,
\]
and the corresponding eigen-space is of dimension $2^d$ and consists of vectors that are nonzero only in the overlapped region shared by all $2^d$ subdomains.
\end{theorem}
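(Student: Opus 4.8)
The plan is to exploit the structure $T^{-1}_{\text{Schwarz}} A = \sum_{i=1}^{D} P_i$ established just before the theorem, where each $P_i$ from \cref{e:pi} is an orthogonal projection with respect to the $A$-inner product $\langle x, y\rangle_A = x^\top A y$. Since $T^{-1}_{\text{Schwarz}} A$ is self-adjoint in this inner product, its eigenvalues are governed by the Rayleigh quotient $\langle \sum_i P_i x, x\rangle_A / \langle x, x\rangle_A = \sum_i \|P_i x\|_A^2 / \|x\|_A^2$. Because $\|P_i x\|_A \le \|x\|_A$ for an orthogonal projection, this quotient is bounded by $D$, recovering \cref{th:max}; the key point I would emphasize is that equality $\lambda_{\max} = D = 2^d$ holds \emph{iff} there is a nonzero $x$ with $\|P_i x\|_A = \|x\|_A$ for every $i$, equivalently $P_i x = x$ for all $i$ (for an $A$-orthogonal projection, $\|P_i x\|_A = \|x\|_A$ forces $(I-P_i)x=0$). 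Conversely, if $P_i x = x$ for all $i$ then $T^{-1}_{\text{Schwarz}} A\, x = 2^d x$. Thus the eigenspace for $\lambda_{\max} = 2^d$ is exactly $\bigcap_{i=1}^{2^d} \{x : P_i x = x\} = \bigcap_{i=1}^{2^d} \operatorname{range}(P_i)$, and the whole theorem reduces to identifying this intersection.

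Next I would characterize $\operatorname{range}(P_i)$. On one hand, $P_i = R_i^\top A_i^{-1} R_i A$ has range contained in $\operatorname{range}(R_i^\top)$, i.e.\ vectors supported on the index set $\mathcal{I}_i$ of $\Omega_i = \tilde{\mathcal{P}}_i$. On the other hand, \cref{l:subdomain} shows every vector supported on $\Omega_i$ is fixed by $P_i$. Hence $\operatorname{range}(P_i) = \{x : \operatorname{supp}(x) \subseteq \mathcal{I}_i\}$, and therefore $\bigcap_i \operatorname{range}(P_i) = \{x : \operatorname{supp}(x) \subseteq \bigcap_i \mathcal{I}_i\}$, the vectors supported on the grid points lying in \emph{all} $2^d$ extended partitions.

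The remaining step is the concrete geometric computation of $\bigcap_{i=1}^{2^d} \tilde{\mathcal{P}}_i$. For the uniform $2^d$ partitioning with one extra grid layer added in each feasible direction, I would write each $\tilde{\mathcal{P}}_i$ explicitly in grid coordinates (e.g.\ in 2D the bottom-left extended partition occupies columns and rows $1$ through $n/2+1$, the bottom-right occupies columns $n/2$ through $n$ and rows $1$ through $n/2+1$, and so on) and intersect the coordinate ranges. Each dimension contracts to the two central indices $\{n/2, n/2+1\}$, so the common overlap is the central $2\times\cdots\times 2$ ($d$-fold) block of exactly $2^d$ grid points. This both shows the intersection is nonempty — so $\lambda_{\max} = 2^d$ is actually attained rather than merely bounded by \cref{th:max} — and yields the eigenspace dimension $|\bigcap_i \mathcal{I}_i| = 2^d$.

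I expect the only genuine obstacle to be bookkeeping rather than conceptual: correctly tracking which directions each boundary-adjacent partition extends (interior faces extend, exterior faces do not) so that the intersection of the coordinate ranges collapses cleanly to two indices per dimension. The projection-sum argument and the range characterization via \cref{l:subdomain} are routine once the $A$-orthogonality of the $P_i$ is in hand.
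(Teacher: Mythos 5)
Your proposal is correct, and it uses the same two building blocks as the paper's proof --- the upper bound $\lambda_{\max}(T^{-1}A)\le D$ from \cref{th:max} and the fixed-point property of \cref{l:subdomain} applied to vectors supported on the central shared block --- but it goes a genuine step further. The paper's proof only exhibits a $2^d$-dimensional space of eigenvectors attaining the bound (take $x$ supported on the $2^d$ shared grid points and apply \cref{l:subdomain} to each $P_i$); it does not argue that the eigenspace for $\lambda_{\max}=2^d$ contains nothing else, so strictly speaking the dimension claim and the ``consists of'' claim in the statement are left unverified. Your Rayleigh-quotient argument closes exactly that gap: since each $P_i$ is an $A$-orthogonal projection, $\langle P_i x,x\rangle_A=\|P_ix\|_A^2\le\|x\|_A^2$ with equality forcing $P_ix=x$, so the top eigenspace is precisely $\bigcap_i\operatorname{range}(P_i)$, and the inclusion $\operatorname{range}(P_i)\subseteq\operatorname{range}(R_i^\top)$ combined with \cref{l:subdomain} identifies $\operatorname{range}(P_i)$ with the vectors supported on $\mathcal{I}_i$. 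The remaining coordinate bookkeeping (each dimension's index range intersecting to the two central indices, giving $2^d$ points) matches the paper's geometric observation. In short, your route buys the full ``if and only if'' characterization of the eigenspace at the cost of a short extra argument about equality in the projection bound; the paper's version is shorter but proves only the attainment direction.
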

\begin{proof}
\begin{figure}
         \begin{subfigure}[b]{0.48\textwidth}
         \centering
         \includegraphics[width=0.46\textwidth]{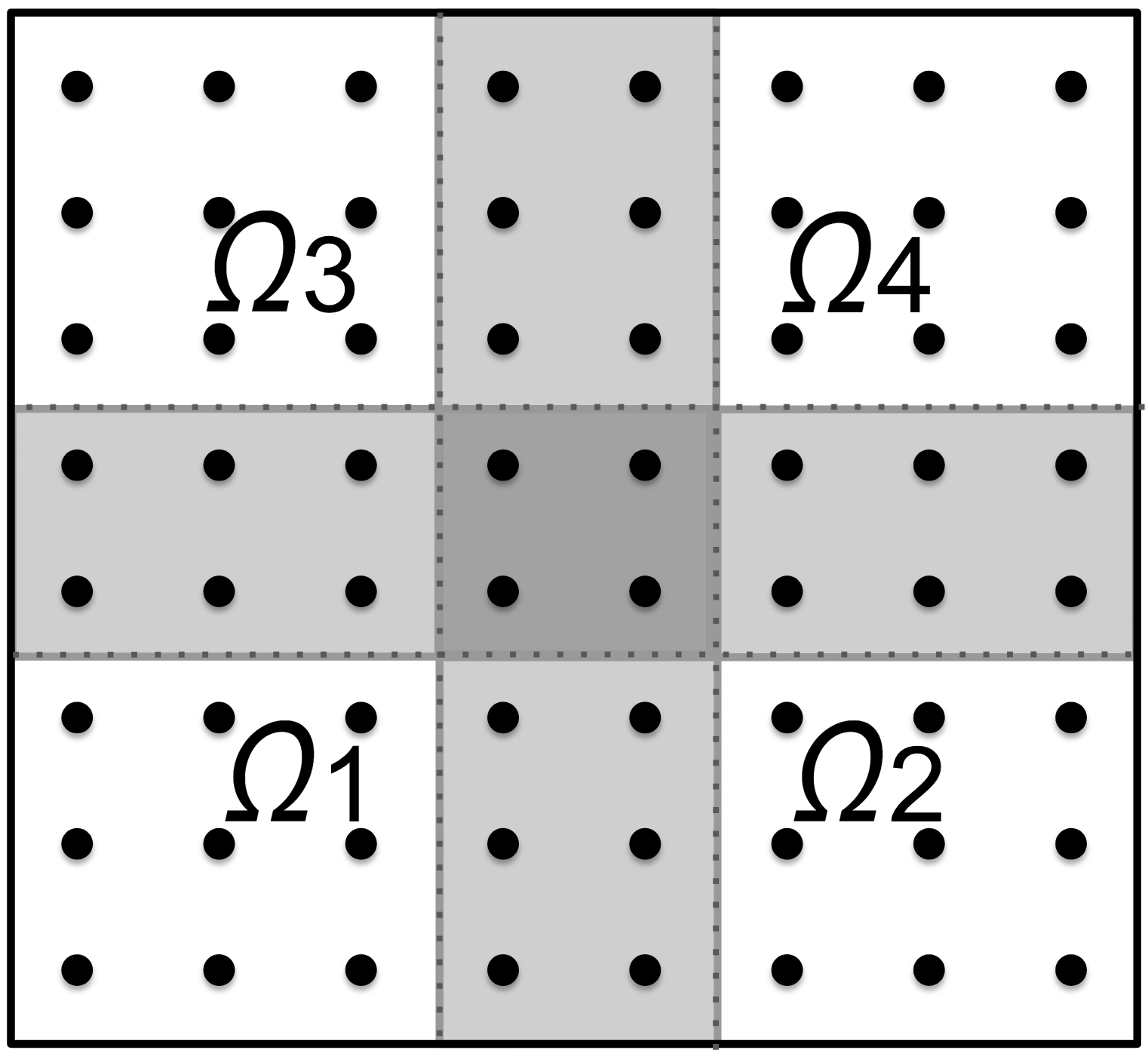}
         \caption{}
        \label{f:shared}
        \end{subfigure}
        \begin{subfigure}[b]{0.48\textwidth}
         \centering
         \includegraphics[width=0.46\textwidth]{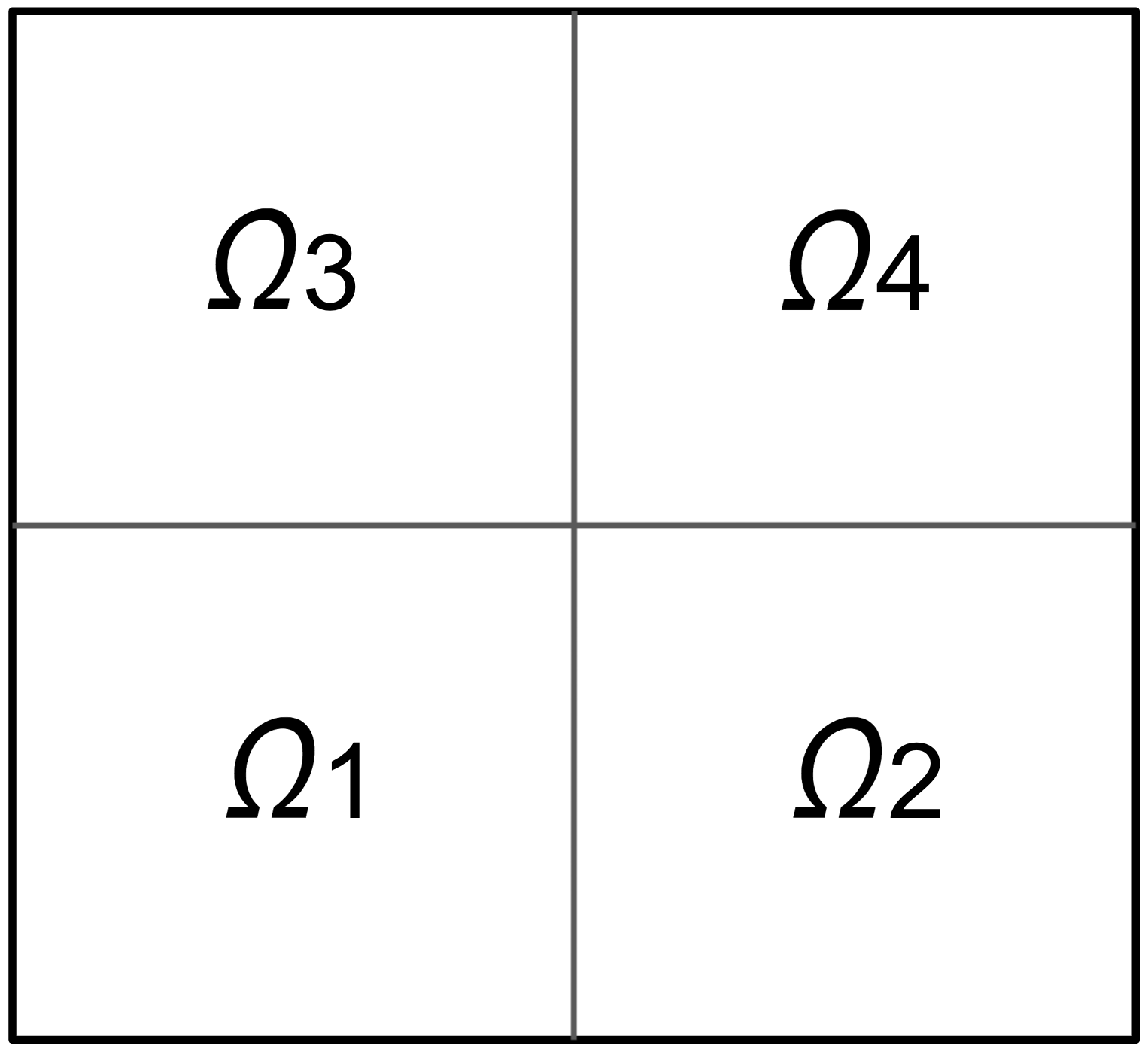}
         \caption{}
        \label{f:shared_jacobi}
        \end{subfigure}
        \caption{\em (a) $D=4$ in \cref{e:first}. Shaded regions are shared by adjacent subdomains, and the four grid points at the center are shared by all four subdomains. (b) $D=4$ in \cref{e:dd}. As $N\to\infty$, all four subdomains contain the center of the square.}        
\end{figure}

According to \cref{th:max}, we know that $\lambda_{\max}(T^{-1}_{\text{Schwarz}} A) \le 2^d$. Recall that when $D=2^d$, we have $\cap_{i=1}^{2^d} \Omega_i \not= \emptyset$, and the shared region contains $2^d$ grid points; see \cref{f:shared} for an example. Suppose $x$ is nonzero only in the shared region. 
According to \cref{l:subdomain}, we know that $P_i \, x = x$ for all $i$. Therefore,
\[
T^{-1}_{\text{Schwarz}} A \, x = \sum_{i=1}^{2^d} P_i \, x = 2^d \, x.
\]
\end{proof}
The theorem does not apply to $T^{-1}_{\text{Jacobi}} A$ directly but provides insight on the maximum eigen-pair when the problem size $N \to \infty$. In the limit, all subdomains associated with the block Jacobi preconditioner share the center of the problem domain $\Omega=[0,1]^d$; see \cref{f:shared_jacobi} for an example. So we know that 
\begin{equation} \label{e:emax_jacobi}
\lim_{N\to\infty} \lambda_{\max}(T^{-1}_{\text{Jacobi}} A)=2^d;
\end{equation}
see numerical results in \cref{t:cbd}. The corresponding eigenvector converges to the indicator function of the center point $x_c$, i.e.,
\begin{equation} \label{e:delta}
\bm{1}_{x_c}(x) = \\
\left\{
\begin{array}{cc}
1 & x = x_c, \\
0 & \text{else};
\end{array}
\right.
\end{equation}
see numerical results plotted in \cref{sf:max_1d,sf:max_2d}.

Next, we consider the minimum eigen-pair of $T^{-1}_{\text{Jacobi}} A$. We start with an example in one dimension (1D). Suppose the problem domain $\Omega = [0,1]$ is cut into two halves of equal length ($D=2$), i.e., $\Omega_1 = [0,1/2], \Omega_2 = [1/2, 1]$. Suppose a uniform discretization grid has $N$ nodes, where $N$ is an even integer and the discretization points are indexed from left to right. Let the indices of discretization points in $\Omega_1$ and $\Omega_2$ be $\mathcal{I}_1 = \{1,2,\ldots, N/2\}$ and $\mathcal{I}_1 = \{N/2+1,N/2+2,\ldots, N\}$, respectively. Recall the block Jacobi preconditioner $T^{-1}_{\text{Jacobi}}$ defined in \cref{e:ad}. Consider the eigenvalue problem
\begin{equation} \label{e:eig}
T^{-1}_{\text{Jacobi}} \, A \, x = \lambda \, x,
\end{equation}
where $\lambda$ is an eigenvalue. Write  $x=x_1+x_2$, where $x_i = R_i x$ for $i=1,2$. We state the following theorem and give the proof in \cref{s:app}.
\begin{theorem} \label{th:jacobi}
Let $x=x_1+x_2$ be an eigenvector associated with an eigenvalue $\lambda$ of the preconditioned matrix $T^{-1}_{\text{Jacobi}} \, A$ in 1D. Then, $x'=x_1-x_2$ is also an eigenvector, and the corresponding eigenvalue is $2-\lambda$. In other words,
\[
T^{-1}_{\text{Jacobi}} \, A \, x = \lambda \, x
\quad
\Leftrightarrow
\quad
T^{-1}_{\text{Jacobi}} \, A \, x' = (2-\lambda) \, x'.
\]
\end{theorem}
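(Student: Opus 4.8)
The plan is to exploit the fact that the block Jacobi preconditioner turns the preconditioned matrix into a $2\times 2$ block matrix whose two diagonal blocks are identities; then flipping the sign of the second half of an eigenvector is an involution that sends $\lambda$ to $2-\lambda$. So I would not attempt any spectral estimate directly, but rather expose a structural symmetry of $T^{-1}_{\text{Jacobi}} A$.

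First I would write everything in block form with respect to the partition $\mathcal{I}_1, \mathcal{I}_2$. Since $R_1^\top R_1 + R_2^\top R_2 = I_N$, the decomposition $x=x_1+x_2$ (interpreted as $x_i = R_i^\top R_i x \in \mathbb{R}^N$, the natural embedding of $R_i x$) is just the splitting of $x$ into its restrictions to the two halves. Writing $A = \begin{pmatrix} A_{11} & A_{12} \\ A_{21} & A_{22}\end{pmatrix}$ with $A_{ij} = R_i A R_j^\top$, the definition of $T^{-1}_{\text{Jacobi}}$ in \cref{e:ad} gives $T^{-1}_{\text{Jacobi}} = \diag(A_{11}^{-1}, A_{22}^{-1})$, hence
\[
T^{-1}_{\text{Jacobi}} A = \begin{pmatrix} I & A_{11}^{-1}A_{12} \\ A_{22}^{-1}A_{21} & I \end{pmatrix}.
\]
The essential observation is that the diagonal blocks are exactly identities, because $A_{ii}^{-1}A_{ii} = I$.

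Next I would translate the eigenvalue relation into coordinates. Setting $u = R_1 x$, $v = R_2 x$ and writing $B = A_{11}^{-1}A_{12}$, $C = A_{22}^{-1}A_{21}$, the equation $T^{-1}_{\text{Jacobi}} A x = \lambda x$ becomes the coupled pair $Bv = (\lambda-1)u$ and $Cu = (\lambda-1)v$. I would then simply apply the block operator to $x' = x_1 - x_2$, i.e.\ to $(u,-v)$: its first block component is $u - Bv = u - (\lambda-1)u = (2-\lambda)u$ and its second is $Cu - v = (\lambda-1)v - v = (\lambda-2)v$, so $T^{-1}_{\text{Jacobi}} A x' = (2-\lambda)x'$. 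Since the sign-flip map is its own inverse, exchanging the roles of $x$ and $x'$ yields the stated equivalence in both directions.

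I do not expect a genuine obstacle here; the entire content is the structural identity that the diagonal blocks of the preconditioned matrix are identities, which relies only on $D=2$ and on the \emph{non-overlapping} (Jacobi) partition, so that $R_1^\top R_1 + R_2^\top R_2 = I_N$. The only points to handle with care are the bookkeeping between the $\mathbb{R}^N$ vectors $x_i$ and their coordinate representations $u,v$, and the remark that SPD-ness of $A$ enters solely to guarantee that $A_{11}, A_{22}$ are invertible so that $T^{-1}_{\text{Jacobi}}$ is well defined. It is worth noting that nothing in the computation is special to 1D beyond the two-block setting; the same argument shows the Jacobi spectrum is symmetric about $1$, so that $\lambda_{\min} = 2 - \lambda_{\max} \to 0$ as $\lambda_{\max} \to 2$, which is precisely the behavior reported for $T^{-1}_{\text{Jacobi}}$ in \cref{t:schwarz}.
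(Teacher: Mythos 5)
Your proof is correct and is essentially the paper's own argument in different notation: the paper phrases the key fact that the diagonal blocks of $T^{-1}_{\text{Jacobi}}A$ are identities via the projections $P_i$ and \cref{l:subdomain}, derives the same coupled system $P_1x_2=(\lambda-1)x_1$, $P_2x_1=(\lambda-1)x_2$ (your $Bv=(\lambda-1)u$, $Cu=(\lambda-1)v$), and concludes with the same computation on $x_1-x_2$. Your closing remark that the argument only needs the two-block non-overlapping structure, and hence gives the symmetry of the Jacobi spectrum about $1$, matches how the paper uses the theorem to deduce $\lambda_{\min}\to 0$ from $\lambda_{\max}\to 2^d$ in 1D.
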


\begin{figure}
     \centering
     \begin{subfigure}[b]{0.48\textwidth}
         \centering
         \includegraphics[width=0.67\textwidth]{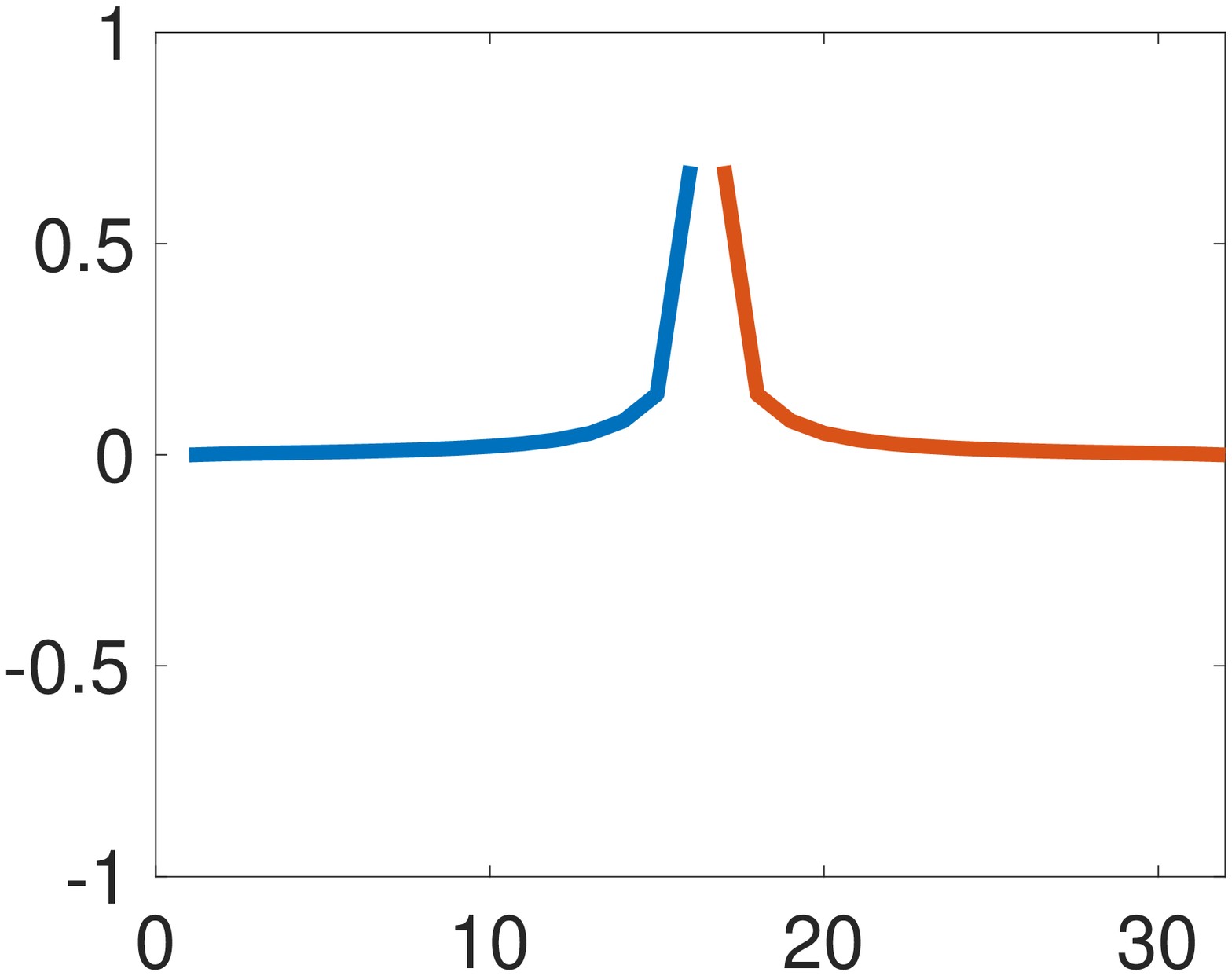}
         \caption{\em Eigenvector for $\lambda_{\max}$ in 1D}
         \label{sf:max_1d}
     \end{subfigure}
     \begin{subfigure}[b]{0.48\textwidth}
         \centering
         \includegraphics[width=0.67\textwidth]{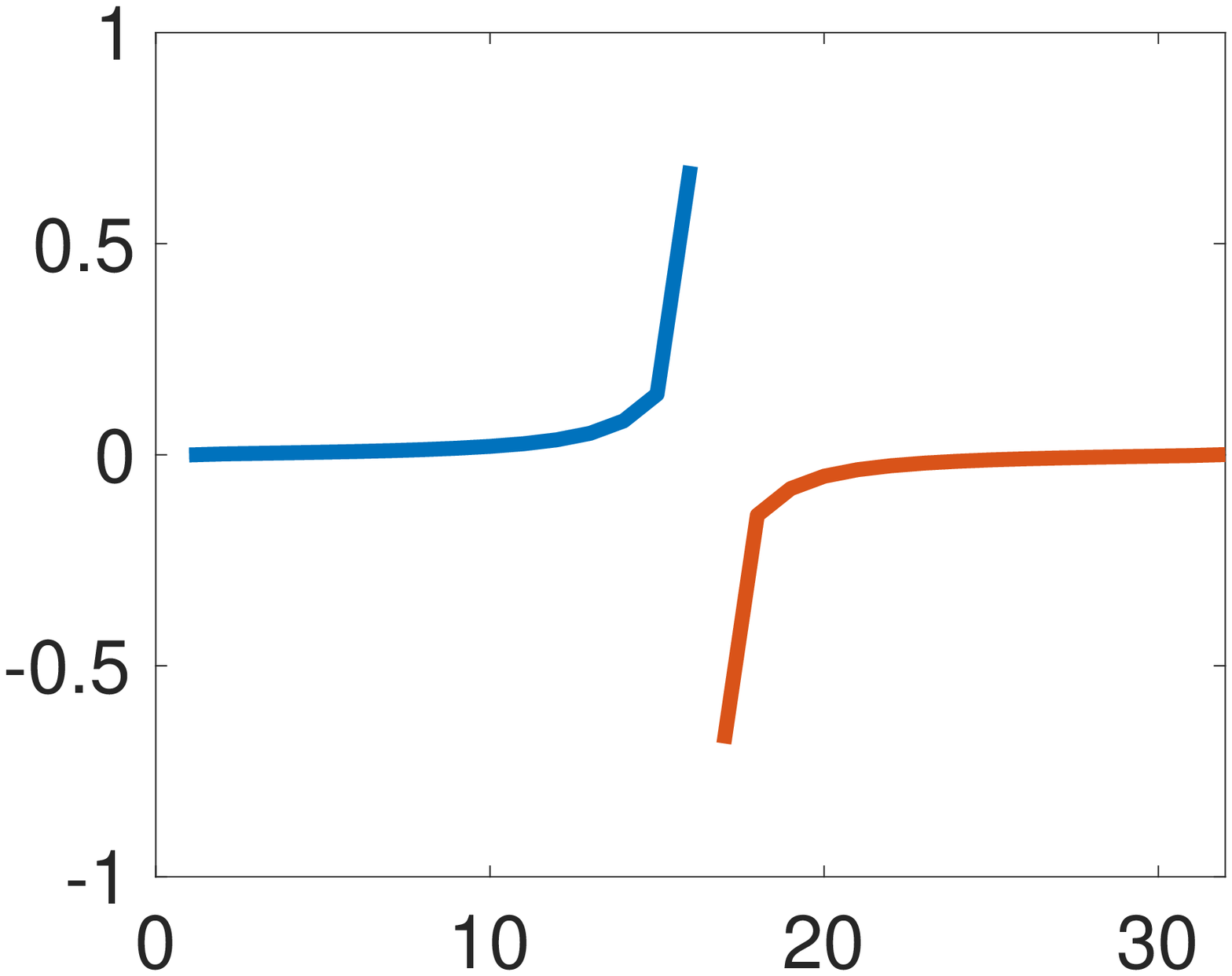}
         \caption{\em Eigenvector for $\lambda_{\min}$ in 1D}
         \label{sf:min_1d}
     \end{subfigure}
     \centering
          \begin{subfigure}[b]{0.48\textwidth}
         \centering
         \includegraphics[width=\textwidth]{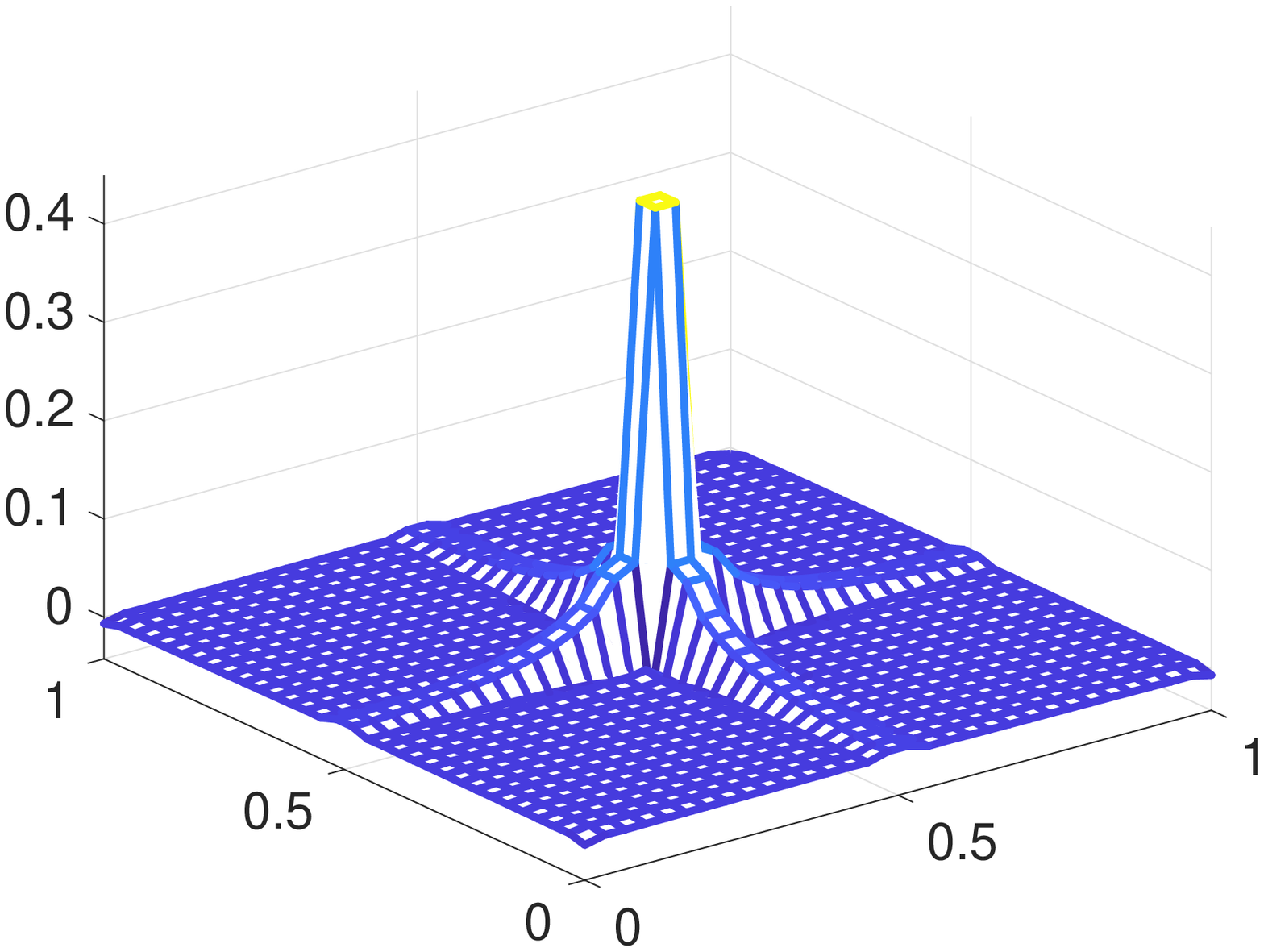}
         \caption{\em Eigenvector for $\lambda_{\max}$ in 2D}
         \label{sf:max_2d}
     \end{subfigure}
     \begin{subfigure}[b]{0.48\textwidth}
         \centering
         \includegraphics[width=\textwidth]{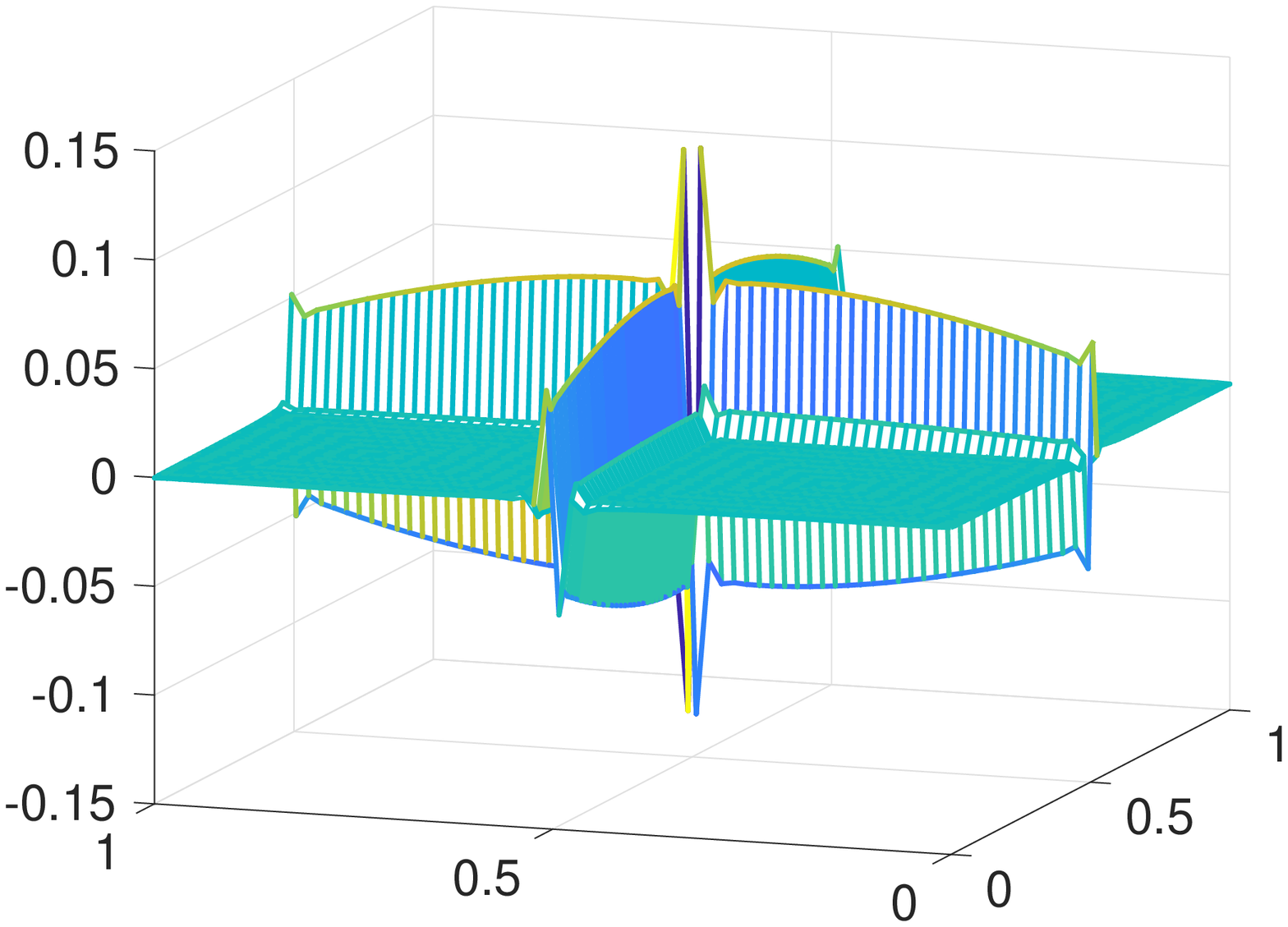}
         \caption{\em Eigenvector for $\lambda_{\min}$ in 2D}
         \label{sf:min_2d}
     \end{subfigure} 
     \caption{\em Eigenvectors of $T^{-1}_{\text{Jacobi}} A$ in 1D ($N=32, D=2$) and 2D ($N=32^2, D=2^2$). The discretized integral operator is given in \cref{e:a_2d}.}
     \label{f:jacobi_evec}
\end{figure}

According to the above theorem, \cref{e:emax_jacobi} implies
\begin{equation} \label{e:emin_jacobi}
\lim_{N\to\infty} \lambda_{\min}(T^{-1}_{\text{Jacobi}} A) = 0
\end{equation}
for our example in 1D; see numerical results of eigenvectors in \cref{sf:max_1d,sf:min_1d}. 
One interpretation of the eigenvectors of $T^{-1}_{\text{Jacobi}} A$ from electrostatics is the following. 
The limit ($N\to\infty$) of the eigenvector associated with $\lambda_{\max}$, namely, \cref{e:delta}, can be interpreted as putting a point charge at the domain center. The eigenvector associated with $\lambda_{\min}$  corresponds to putting dipoles near the boundaries of adjacent subdomains. For the example in 1D, the boundary is just one point $x_c=1/2$; see \cref{sf:min_1d}. For the unite square domain in 2D, the boundaries are two lines $1/2 \times [0,1]$ and $[0,1] \times 1/2$; see \cref{sf:min_2d}. As a result, the electric potential induced by all charges cancels out approximately.
Based on this intuition, we conjecture that \cref{e:emin_jacobi} also holds when $d=2$ and 3; see numerical results in \cref{t:cbd}.

Finally, we consider the minimum eigen-pair of $T^{-1}_{\text{Schwarz}} A$. Let us revisit the previous example in 1D, where the  problem domain $\Omega = [0,1]$ is discretized with a uniform grid of size $N$ (an even integer) and the grid points are indexed from left to right. To construct the Schwarz preconditioner, we use the overlapping domain decomposition where $\Omega_1$ and $\Omega_2$ contain the first and the last $N/2+1$ grid points, respectively. \cref{sf:vmin_1d} shows the eigenvector corresponding to $\lambda_{\min}$. In the figure, there is a ``dipole'' at the overlapped region, and the eigenvector decays more slowly away from the boundary than that in \cref{sf:min_1d}. 
 This observation extends to higher dimensions, and the differences between the two eigenvectors for $\lambda_{\min}$ are more pronounced  in 2D; compare \cref{sf:vmin_2d} to \cref{sf:min_2d}. 
 Both eigenvectors contain ``dipoles'' near the boundaries of the four subdomains in 2D. For the eigenvector associated with the Schwarz preconditioner, the ``dipoles'' mainly concentrate around $(1/2,0), (1/2,1), (0,1/2)$ and $(1,1/2)$ on the boundary of the square domain. 
For the eigenvector associated with the block Jacobi preconditioner, however, 
the ``dipoles'' mainly exist in the interior of the domain. This empirical observation extends to cases when $D>2^d$. 
The difference between the minimum eigenvalues of the preconditioned matrix is shown in \cref{t:cbd} for problems in 2D and in 3D. 

\begin{figure}
     \centering
     \begin{subfigure}[b]{0.48\textwidth}
         \centering
         \includegraphics[width=0.75\textwidth]{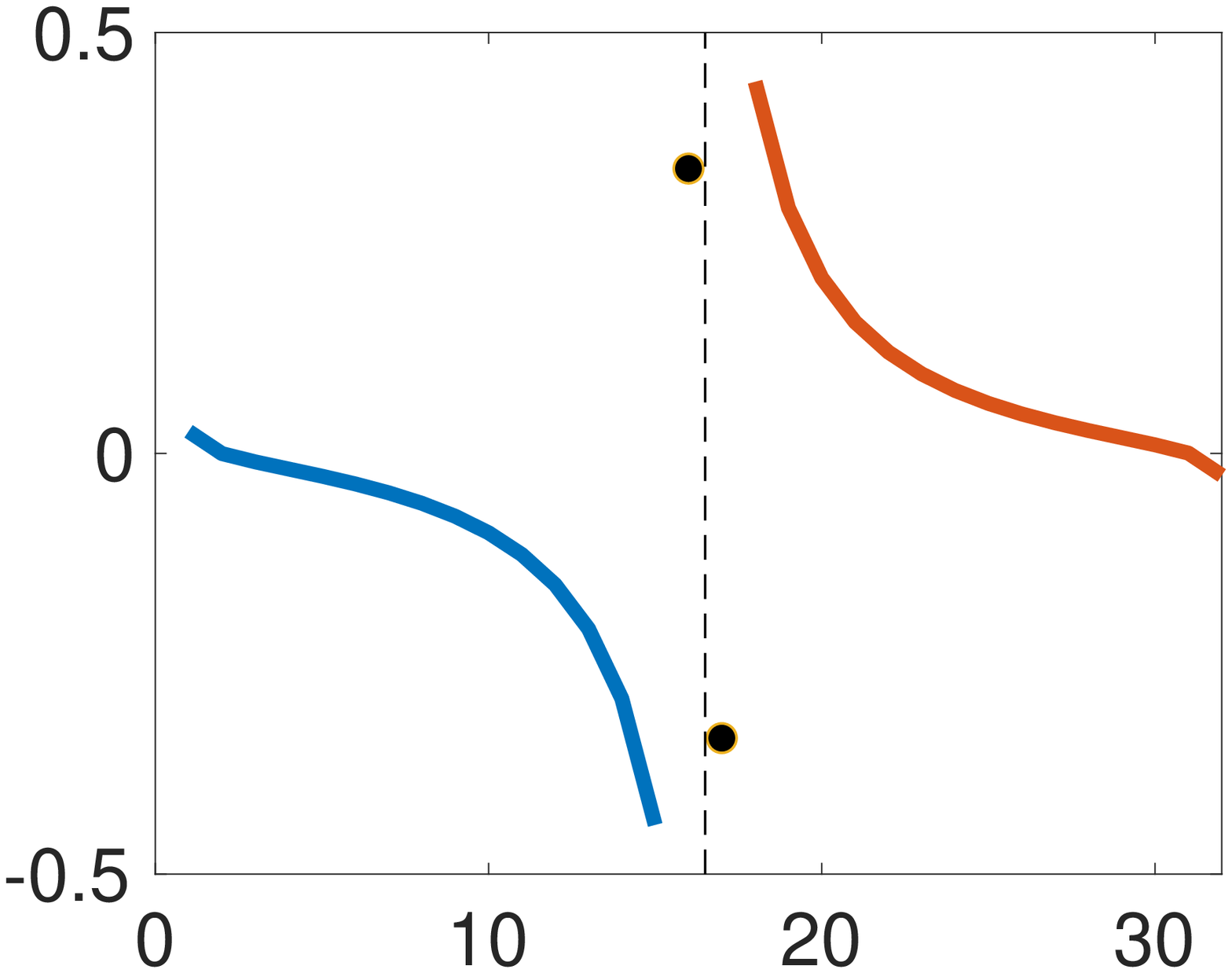}
         \caption{\em Eigenvector for $\lambda_{\min}$ in 1D}
         \label{sf:vmin_1d}
     \end{subfigure}
     \begin{subfigure}[b]{0.48\textwidth}
         \centering
         \includegraphics[width=\textwidth]{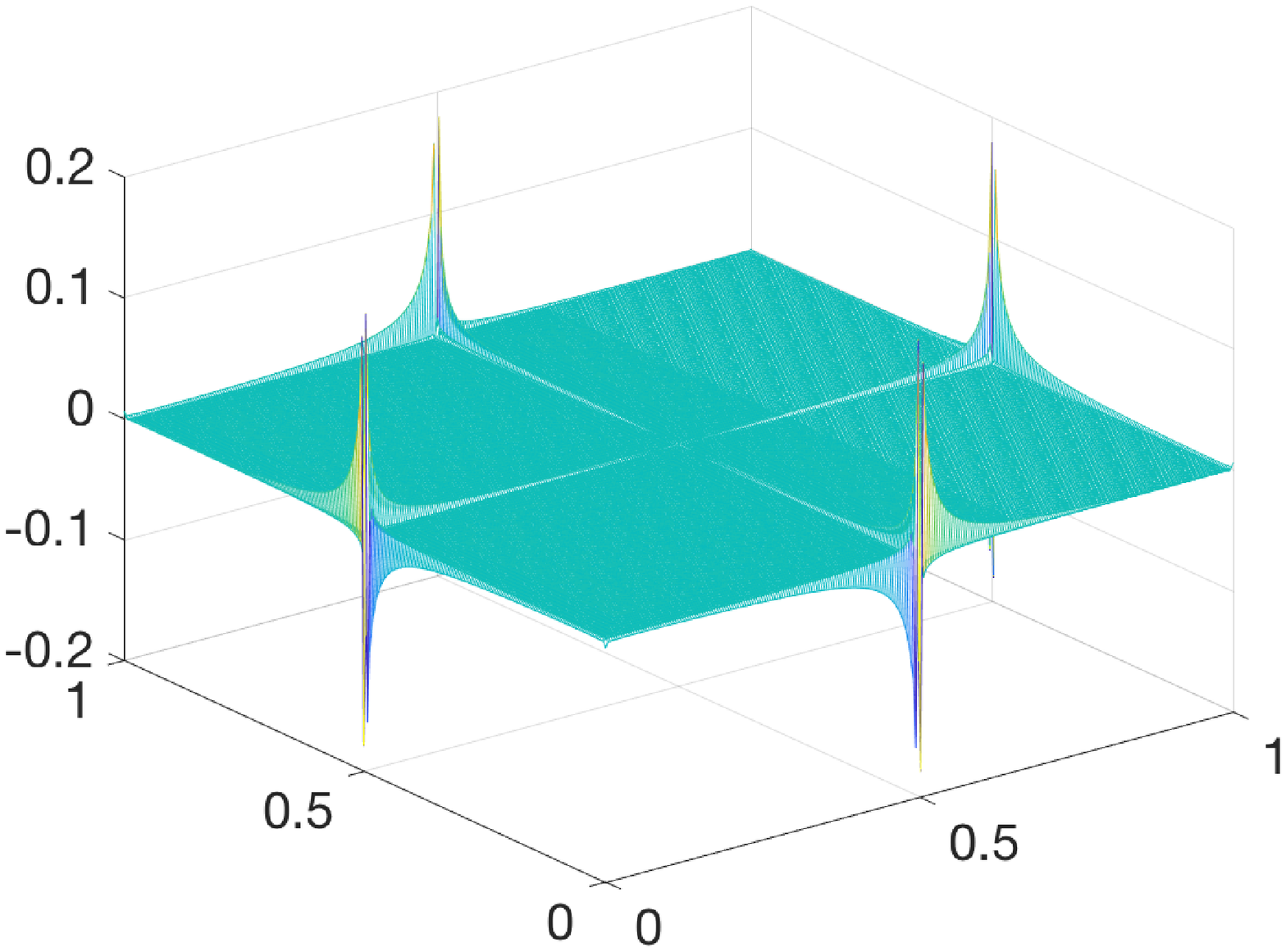}
         \caption{\em Eigenvector for $\lambda_{\min}$ in 2D}
         \label{sf:vmin_2d}
     \end{subfigure}     
     \caption{\em Eigenvectors of $T^{-1}_{\text{Schwarz}} A$ in 1D ($N=32, D=2$) and 2D ($N=256^2, D=2^2$). In (a), two dots (in black) lie in the overlapped region of two subdomains, and two curves (in blue and red, respectively) lie in the interior of two subdomains, respectively. The discretized integral operator is given in \cref{e:a_2d}.}
\end{figure}

\section{Coloring-based decomposition (CBD) and preconditioner} \label{s:cbd}


In this section, we introduce the CBD preconditioner based on the decomposition \cref{e:second}. Recall that a subdomain consists of spatially distant regions as shown in \cref{sf:subdomain} and the algebraic formulation of the CBD preconditioner is given in \cref{e:ad}. In \Cref{ss:spectral}, we show that the maximum and minimum eigenvalues of the preconditioned matrix $T^{-1}_{\text{CBD}} A$ are bounded from above and from below, respectively. In \Cref{ss:subdomain}, we apply the recursive skeletonization (RS) factorization~\cite{ho2016hierarchical} to construct approximate factorizations of subproblems in the CBD preconditioner, which are used to apply the preconditioner efficiently. In \Cref{ss:compare}, we compare the CBD preconditioner to the original RS method, and in \Cref{s:fast}, we provide some analysis of the CBD preconditioner.




\begin{table}
    \caption{\em Maximum and minimum eigenvalues of the preconditioned matrix \cref{e:ta}. The block Jacobi, the Schwarz and the CBD preconditioners are defined in \cref{t:dd}. The number of subdomains $D=2^d$ is fixed.
    }
    \label{t:cbd}
    \begin{subtable}[h]{\textwidth}
    \centering
    \begin{tabular}{cc|cc|cc|ccc} 
    \toprule
      \multirow{2}{*}{$N$}  &  \multirow{2}{*}{$D$}  
      & \multicolumn{2}{c|}{Jacobi}  & \multicolumn{2}{c|}{Schwarz} & \multicolumn{2}{c}{CBD} \\
      && $\lambda_{\max}$ & $\lambda_{\min}$ & $\lambda_{\max}$ & $\lambda_{\min}$ 
      & $M$ & $\lambda_{\max}$ & $\lambda_{\min}$  \\ \midrule 
      $8^2$     & 4       & 2.8479     & 0.1695 & 4.0000    & 0.8209 & $2^2$ & 4.0000    & 0.8209      \\
      $16^2$   & 4       & 3.1876     & 0.0838 & 4.0000    & 0.8237 & $4^2$ & 4.0000    & 0.9201      \\
      $32^2$   & 4       & 3.3965     & 0.0419 & 4.0000    & 0.8280 & $8^2$ & 4.0000    & 0.9397       \\
      $64^2$   & 4       & 3.5349     & 0.0210 & 4.0000    & 0.8305 & $16^2$ & 4.0000    & 0.9403      \\
      $128^2$ & 4       & 3.6316     & 0.0105 & 4.0000    & 0.8317 & $32^2$ & 4.0000    & 0.9399      \\
    \bottomrule
    \end{tabular}
    \caption{\em Discretized integral operator in 2D; see \cref{e:a_2d}.}
    \end{subtable}
    \begin{subtable}[h]{\textwidth}
    \centering
    \begin{tabular}{cc|cc|cc|ccc} 
    \toprule
      \multirow{2}{*}{$N$}  &  \multirow{2}{*}{$D$}  
      & \multicolumn{2}{c|}{Jacobi} & \multicolumn{2}{c|}{Schwarz} & \multicolumn{2}{c}{CBD} \\
      & &  $\lambda_{\max}$  & $\lambda_{\min}$  & $\lambda_{\max}$  & $\lambda_{\min}$
      & $M$ & $\lambda_{\max}$  & $\lambda_{\min}$  \\ \midrule 
      $4^3$     & 8       & 4.0618     & 0.2602 & 8.0000    & 0.9750  & $2^3$ & 8.0000    & 0.9750    \\
      $8^3$     & 8       & 4.6797     & 0.1532 & 8.0000    & 0.9408  & $4^3$ &  8.0000    & 0.9965    \\
      $16^3$   & 8       & 5.1116     & 0.0850 & 8.0000    & 0.9020  & $8^3$ &  8.0000    & 0.9992      \\
      $32^3$   & 8       & 5.4158     & 0.0451 & 8.0000    & 0.8661  & $16^3$ &  8.0000    & 0.9998     \\
    \bottomrule
    \end{tabular}
    \caption{\em Discretized integral operator in 3D; see \cref{e:a_3d}.}
    \end{subtable}
\end{table}

\subsection{Spectral equivalence} \label{ss:spectral}

\begin{figure}
     \hfill
     \begin{subfigure}[b]{0.4\textwidth}
         \centering
         \includegraphics[width=0.6\textwidth]{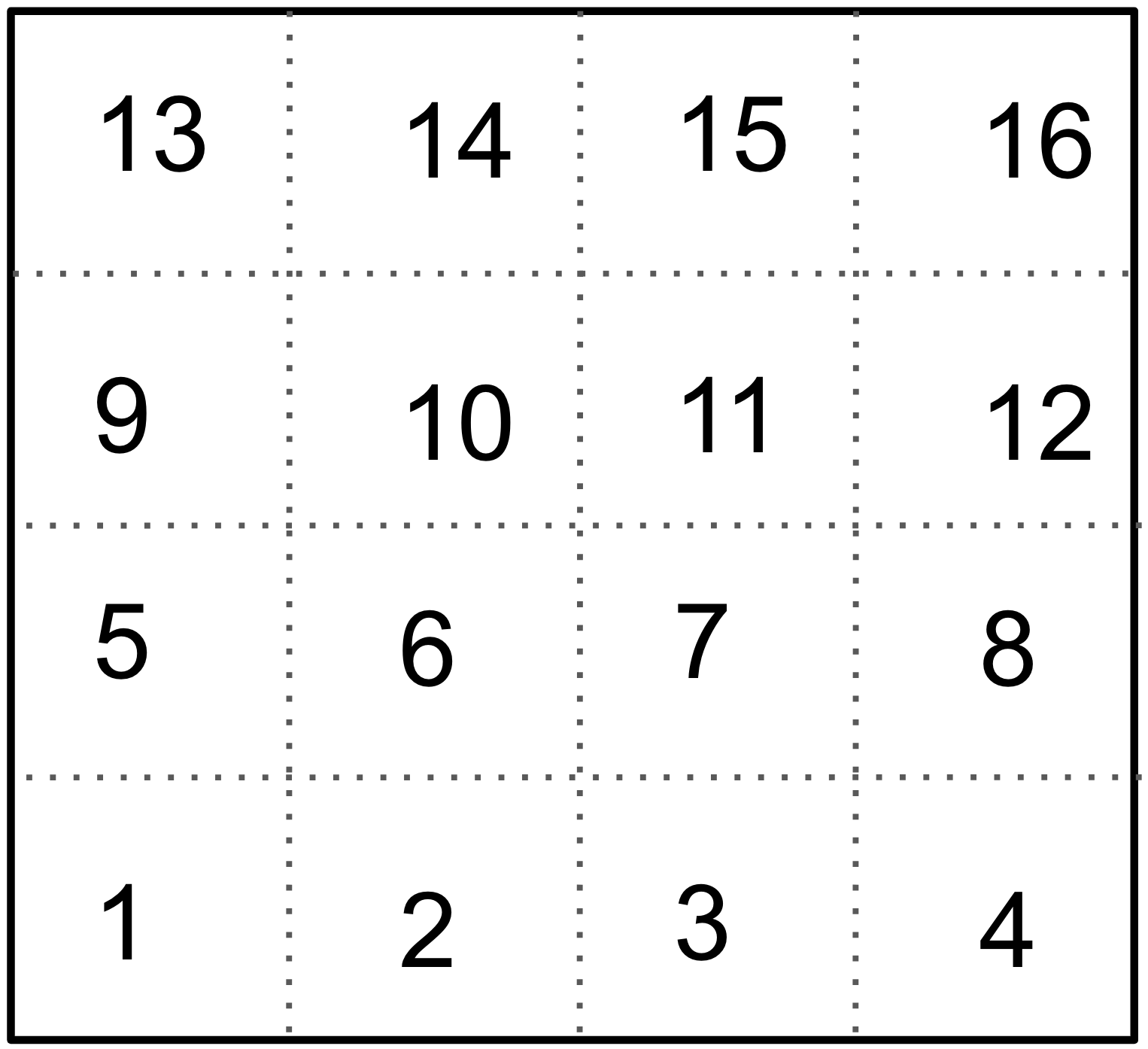}
         \caption{}
         \label{sf:partition2}
     \end{subfigure}
     \hfill
     \begin{subfigure}[b]{0.4\textwidth}
         \centering
         \includegraphics[width=0.6\textwidth]{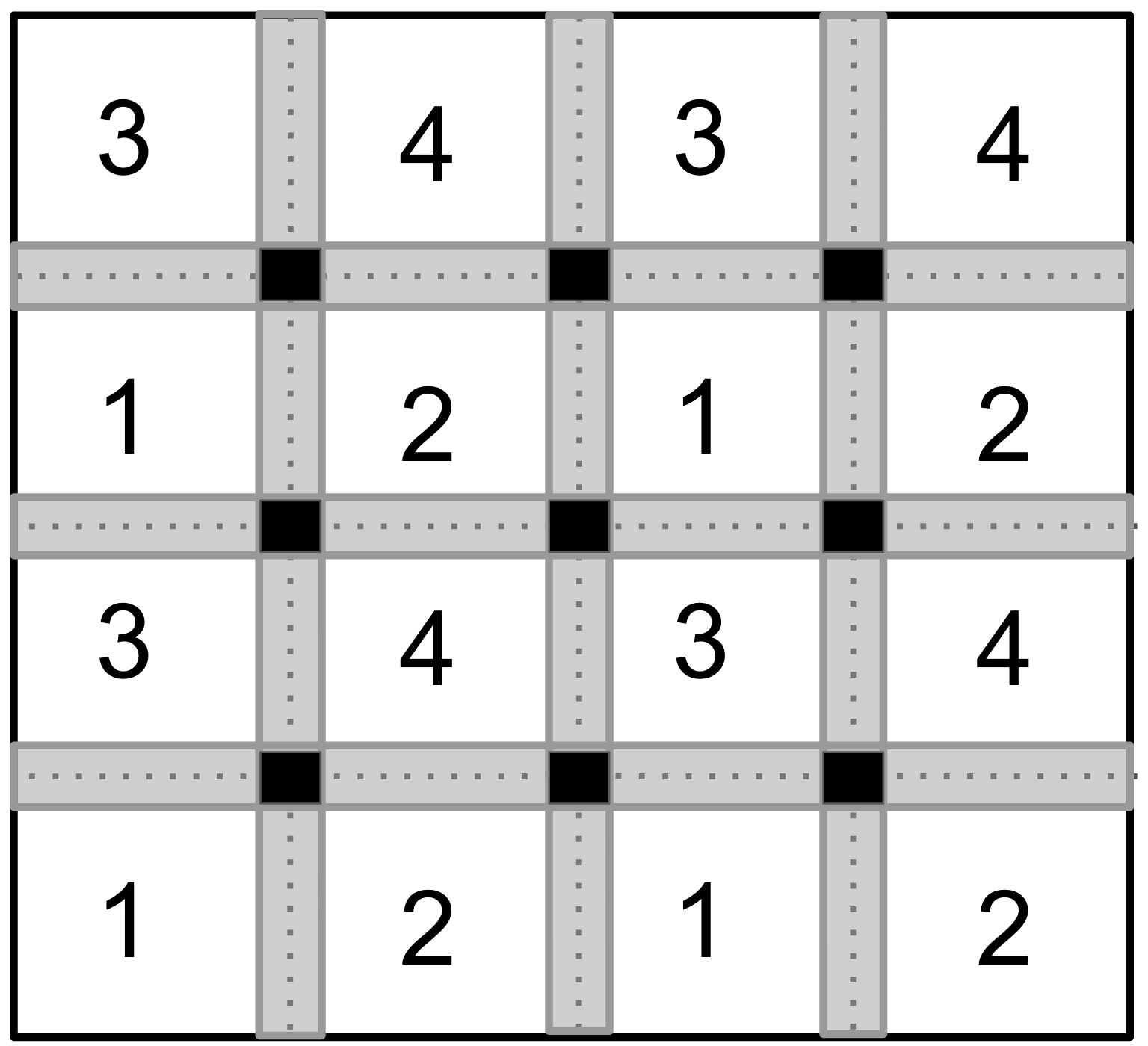}
         \caption{}
     	 \label{sf:overlap}     
     \end{subfigure}
     \hfill
     \caption{\em (a) a square domain is partitioned uniformly into 16 partitions; (b) shared region (in black) of the four subdomains in the CBD preconditioner. Every subdomain has four separated regions indicated by the index.}
     \label{f:cbd}
\end{figure}

The number of subdomains used in the CBD preconditioner is typically a constant independent of the problem size. This implies that the maximum eigenvalue of the preconditioned matrix is upper bounded according to  \cref{th:max}. In addition, numerical results reveal strong evidence that the minimum eigenvalue is lower bounded away from zero, which can also be justified by \cref{conjecture}. We illustrate these statements with more details in the following.

Consider the maximum eigenvalue of the preconditioned matrix $T^{-1}_{\text{CBD}} A$. As previously mentioned in \cref{s:setup}, we assume the problem domain $\Omega = [0,1]^d$ is discretized with a uniform grid and is partitioned uniformly into $M=m^d$ partitions ($m$ partitions along every dimension); see \cref{sf:partition2}. In this case, we have $D=2^d$ subdomains in the CBD preconditioner. There are $\left(m-1\right)^d$ overlapped regions shared by all subdomains; see \cref{sf:overlap} for an example. When $M=D=2^d$, the CBD preconditioner is reduced to the Schwarz preconditioner. We generalize the analysis in \Cref{ss:jacobi} and  \cref{th:max48} to the following:

\begin{theorem} 
Suppose the problem domain $\Omega = [0,1]^d$ is partitioned uniformly. Given the overlapping decomposition in \cref{e:second}, where $M=m^d \ge D$, we have 
\[
\lambda_{\max}(T^{-1}_{\text{CBD}} A)=2^d,
\]
and the corresponding eigen-space is of dimension $\left(2 \left(m-1\right) \right)^d$ and consists of vectors that are nonzero only in the overlapped region shared by all $2^d$ subdomains.
\end{theorem}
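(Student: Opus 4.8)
The plan is to combine the general upper bound of \cref{th:max} with a direct generalization of the eigenvector construction in \cref{th:max48}, and then to pin down the eigenspace exactly using the $A$-orthogonality of the projections $P_i$. Write $S = \bigcap_{i=1}^{2^d} \Omega_i$ for the region shared by all $2^d$ subdomains. First I would record the two easy facts. By \cref{th:max} we already have $\lambda_{\max}(T^{-1}_{\text{CBD}} A) \le D = 2^d$, so it suffices to exhibit eigenvectors attaining this value. If $x$ is supported only on $S$, then $x = R_i^\top R_i\, x$ for every $i$ (because $S \subseteq \Omega_i$), so \cref{l:subdomain} gives $P_i\, x = x$ for all $i$ and hence $T^{-1}_{\text{CBD}} A\, x = \sum_{i=1}^{2^d} P_i\, x = 2^d x$. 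This shows $\lambda_{\max} = 2^d$ and that the associated eigenspace \emph{contains} all vectors supported on $S$.

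The substantive step is the reverse inclusion: every eigenvector for $2^d$ is supported on $S$. Here I would use that each $P_i$ is an $A$-orthogonal projection, so in the inner product $\langle u,v\rangle_A = u^\top A v$ it is self-adjoint with spectrum $\{0,1\}$; in particular $\langle P_i x, x\rangle_A = \|P_i x\|_A^2 \le \|x\|_A^2$ with equality iff $P_i x = x$. Pairing the eigen-equation $T^{-1}_{\text{CBD}} A\, x = 2^d x$ with $x$ in $\langle\cdot,\cdot\rangle_A$ and using $T^{-1}_{\text{CBD}} A = \sum_i P_i$ gives $\sum_{i=1}^{2^d} \langle P_i x, x\rangle_A = 2^d \langle x,x\rangle_A$, which forces equality in every one of the $2^d$ terms and hence $P_i x = x$ for all $i$. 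Since $P_i x = R_i^\top A_i^{-1} R_i A\, x$ is supported on $\Omega_i$, the identity $P_i x = x$ implies $x$ is supported on $\Omega_i$; intersecting over $i$ yields that $x$ is supported on $S$. Thus the eigenspace is exactly the set of vectors supported on $S$, of dimension $|S|$.

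It remains to count $|S|$. A grid node lies in $S$ precisely when it belongs to extended partitions of all $2^d$ colors, which occurs only in a neighborhood of an interior corner where $2^d$ partitions meet. For a uniform partitioning into $m^d$ partitions there are $(m-1)^d$ such interior corners, and because each partition is enlarged by exactly one grid layer per direction, the nodes shared by the $2^d$ extended partitions around a single corner form a $2\times\cdots\times2$ block of $2^d$ nodes. Under the standing assumption of more than one node per partition per dimension these corner blocks are pairwise disjoint, so $|S| = (m-1)^d \cdot 2^d = (2(m-1))^d$, matching the claimed dimension. When $M = D = 2^d$ there is a single interior corner and this reduces to \cref{th:max48}.

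I expect the main obstacle to be the reverse inclusion, i.e.\ ruling out eigenvectors for $2^d$ with support outside $S$; the equality-in-the-$A$-Rayleigh-quotient argument, leaning on the $\{0,1\}$ spectrum of the $A$-self-adjoint projections $P_i$, is the crucial point, and one must be careful that equality in the summed inequality genuinely forces $P_i x = x$ termwise. The geometric bookkeeping for $|S|$ is comparatively routine once the one-layer extension and the $2^d$-coloring are used to localize the fully shared nodes at the interior corners and to check that the resulting $2^d$-node blocks do not collide.
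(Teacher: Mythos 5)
Your proposal is correct, and for everything the paper actually proves it follows the same route: the upper bound $\lambda_{\max}\le 2^d$ comes from \cref{th:max}, and vectors supported on the region shared by all $2^d$ subdomains are fixed by every $P_i$ via \cref{l:subdomain}, exactly as in \cref{th:max48}, to whose proof the paper simply defers. Where you go beyond the paper is the reverse inclusion. The paper's argument only shows that the eigenspace for $2^d$ \emph{contains} the vectors supported on the shared region and then asserts the dimension $\left(2(m-1)\right)^d$; it never rules out eigenvectors for $2^d$ supported elsewhere, so the "consists of" and dimension claims are left unjustified. Your equality-in-the-$A$-Rayleigh-quotient step closes this gap cleanly: each $P_i$ is an $A$-orthogonal projection, so $\langle P_i x,x\rangle_A=\|P_i x\|_A^2\le\|x\|_A^2$ with equality iff $P_i x=x$; summing over the $2^d$ terms and using the eigen-equation forces $P_i x=x$ for every $i$, and since $P_i x$ lies in the range of $R_i^\top$ it is supported on $\Omega_i$, whence $x$ is supported on $\bigcap_i\Omega_i$. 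Your count of that intersection --- $(m-1)^d$ interior corners, each contributing a disjoint block of $2^d$ nodes under the one-layer extension --- matches the parenthetical count the paper relies on. In short, your approach coincides with the paper's where the paper gives details, and your additional $A$-orthogonality argument is precisely what is needed to turn the paper's dimension claim from an observation into a proved statement.
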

\begin{proof}
The proof is almost identical to that of \cref{th:max48}, except there are $\left(m-1\right)^d$ overlapped regions shared by all subdomains. (Recall that every shared region has $2^d$ discretization points; see \cref{f:shared}.)
\end{proof}

Next, we consider the minimum eigenvalue of the preconditioned matrix in \cref{e:ta}. Assume we are given a partitioning of the problem domain as in \cref{e:dd}, where the number of partitions $M=\bigO(N)$. We can construct the Schwarz and the CBD preconditioners based on \cref{e:first,e:second}, respectively. In this case, a subdomain in the CBD preconditioner is a union of non-adjacent subdomains in the Schwarz preconditioner. This implies that every subproblem in the Schwarz preconditioner is a diagonal block in a subproblem in the CBD preconditioner. As a concrete example, we have $\Omega_1^\text{CBD} = \Omega_1^\text{Schwarz} \cup \Omega_3^\text{Schwarz} \cup \Omega_9^\text{Schwarz} \cup \Omega_{11}^\text{Schwarz}$ in \cref{f:cbd}. Correspondingly, we know the discretization points satisfy the same relationship, i.e., $\mathcal{I}_1^\text{CBD} = \mathcal{I}_1^\text{Schwarz} \cup  \mathcal{I}_3^\text{Schwarz} \cup \mathcal{I}_9^\text{Schwarz} \cup \mathcal{I}_{11}^\text{Schwarz}$. According to the definition \cref{e:ai}, we have
\[
A_{1}^\text{CBD} = 
\begin{bmatrix*}[l]
A_{1}^\text{Schwarz} & \times & \times & \times  \\
\times & A_{3}^\text{Schwarz} & \times & \times  \\
\times & \times & A_{9}^\text{Schwarz} & \times  \\
\times & \times & \times & A_{11}^\text{Schwarz}
\end{bmatrix*},
\]
{where $\times$ stands for the ``interaction'' between two Schwarz subdomains, entries in the original matrix $A$ but not in any $A_{i}^\text{Schwarz}$.} In general, the CBD preconditioner uses (much) more entries in the original problem $A$ than the related Schwarz preconditioner. A subproblem $A_{i}^\text{CBD}$ contains $\bigO(N/2^d)$ entries in $A$, whereas a subproblem $A_{i}^\text{Schwarz}$ contains only $\bigO(1)$ entries. Therefore, we expect the CBD preconditioner to be a better preconditioner. Compare $\lambda_{\min}(T^{-1}_{\text{CBD}} A)$ in \cref{t:cbd} to $\lambda_{\min}(T^{-1}_{\text{Schwarz}} A)$ in \cref{t:schwarz}. We state the following:

\begin{conjecture} 
Let $A$ be the discretized integral operator in \cref{e:a_2d} or \cref{e:a_3d}. Given the overlapping domain decomposition \cref{e:second}, the CBD preconditioner $T^{-1}_{\text{CBD}}$ is defined in \cref{e:ad}. Then,
\[
\lambda_{\min}(T^{-1}_{\text{CBD}} A) \ge C > 0,
\]
where $C$ is a positive constant.
\end{conjecture}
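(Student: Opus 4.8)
The plan is to deduce the bound from the analogous statement for the Schwarz preconditioner, \cref{conjecture}, by proving the comparison $\lambda_{\min}(T^{-1}_{\text{CBD}}A) \ge \lambda_{\min}(T^{-1}_{\text{Schwarz}}A)$ for matching domain decompositions. The tool is the abstract additive-Schwarz energy identity (the stable-decomposition characterization used in~\cite{chan1994domain,dolean2015introduction}): for any preconditioner of the form \cref{e:ad} one has $v^\top T v = \min\{\sum_i w_i^\top A_i w_i : \sum_i R_i^\top w_i = v\}$, and since $A$ is SPD, $\lambda_{\min}(T^{-1}A) = \min_{v\ne 0} (v^\top A v)/(v^\top T v)$. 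Hence it suffices to establish $v^\top T_{\text{CBD}} v \le v^\top T_{\text{Schwarz}} v$ for all $v$, i.e.\ $T_{\text{CBD}} \preceq T_{\text{Schwarz}}$ in the Loewner order.

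First I would put the two energy identities on a common footing. Because partitions sharing a color are pairwise non-adjacent, their one-layer extensions are pairwise disjoint, so each CBD index set $\mathcal{I}_i^{\text{CBD}}$ is the disjoint union of the Schwarz index sets with $c_k=i$, and $A_i^{\text{CBD}}$ carries the blocks $A_k^{\text{Schwarz}}$ on its diagonal. Grouping the Schwarz minimization by color and stacking the local vectors of a common color into a single vector $z_i$ supported on $\Omega_i^{\text{CBD}}$ gives a constraint-preserving bijection between Schwarz and CBD decompositions, and yields $v^\top T_{\text{Schwarz}} v = \min\{\sum_i z_i^\top D_i z_i : \sum_i (R_i^{\text{CBD}})^\top z_i = v\}$, where $D_i = \diag(A_k^{\text{Schwarz}} : c_k=i)$ is the block diagonal of $A_i^{\text{CBD}}$. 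Since $v^\top T_{\text{CBD}} v$ is the \emph{same} minimization with $D_i$ replaced by the full block $A_i^{\text{CBD}}$, the comparison collapses to the assembled positivity
\[
T^{-1}_{\text{CBD}} - T^{-1}_{\text{Schwarz}} = \sum_{i=1}^{D} (R_i^{\text{CBD}})^\top \big[ (A_i^{\text{CBD}})^{-1} - D_i^{-1} \big] R_i^{\text{CBD}} \succeq 0.
\]

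The hard part is precisely this positivity, and I expect it to be the main obstacle. It does not follow block by block: $(A_i^{\text{CBD}})^{-1} \succeq D_i^{-1}$ is equivalent to $A_i^{\text{CBD}} \preceq D_i$, i.e.\ to the off-diagonal interaction part of $A_i^{\text{CBD}}$ being negative semidefinite, which fails for the Laplace kernel (its off-diagonal entries are positive in 3D and of mixed sign in 2D, so the interaction block is indefinite). The route I would pursue is to exploit strong admissibility: the off-diagonal blocks of $A_i^{\text{CBD}}$ couple \emph{well-separated} same-color regions, so by the admissibility estimates illustrated in \cref{f:interaction} they are numerically low rank ($\bigO(1)$) and decay with the separation distance. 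Writing $A_i^{\text{CBD}} = D_i + F_i$ with $F_i$ low rank and small in the $D_i$-weighted norm, I would expand $(A_i^{\text{CBD}})^{-1} - D_i^{-1}$ via the Sherman--Morrison--Woodbury formula and try to show that, after global assembly through the $(R_i^{\text{CBD}})^\top(\cdot)R_i^{\text{CBD}}$ maps, the indefinite first-order term is dominated by the (positive) second-order term, \emph{uniformly in} $N$. This domination-after-assembly, made uniform in the problem size, is where the genuine cancellation across colors must be quantified and is the crux of the argument.

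As a fallback that sidesteps the Loewner inequality, I note that it is enough to verify $v^\top T_{\text{CBD}} v \le v^\top T_{\text{Schwarz}} v$ only at the extremal eigenvector of $T^{-1}_{\text{CBD}}A$; and, more robustly, one could instead prove the bound for CBD directly by constructing an $N$-independent stable decomposition adapted to the coloring, using a partition of unity subordinate to $\{\Omega_i^{\text{CBD}}\}_{i=1}^{2^d}$ together with the mapping properties of the single-layer (Newton-potential) operator, which is coercive and of order $-2$, so that $v^\top A v$ is equivalent to a discrete $H^{-1}$-type energy. The obstacle in this second route is controlling the negative-order norm of the localized pieces across the thin, one-layer overlap with a constant independent of $h=1/n$; this is the same difficulty underlying the two-level analyses of~\cite{tran2000overlapping,mund1998two} and is, I believe, the true source of the conjecture's difficulty.
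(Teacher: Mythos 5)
The first thing to note is that the paper does not prove this statement: it is deliberately labeled a conjecture, and the paper's entire support for it consists of (i) the informal argument in \Cref{ss:spectral} that each CBD subproblem $A_i^{\text{CBD}}$ contains the same-color Schwarz subproblems as diagonal blocks plus many additional entries of $A$, so CBD ``should'' be at least as good as Schwarz, and (ii) the eigenvalue tables (\cref{t:cbd} compared with \cref{t:schwarz}). Your reduction to \cref{conjecture} via the Loewner comparison $T_{\text{CBD}} \preceq T_{\text{Schwarz}}$ is a sharpened, quantitative version of exactly that heuristic, and the bookkeeping is sound as far as it goes: the stable-decomposition identity for preconditioners of the form \cref{e:ad}, the color-grouping of local vectors, and the disjointness of same-color extended partitions (valid once $N/M$ is large enough that one-layer extensions of non-adjacent partitions cannot meet) all check out.

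As a proof, however, the proposal has two genuine gaps, both of which you flag yourself. First, the assembled positivity $\sum_i (R_i^{\text{CBD}})^\top\bigl[(A_i^{\text{CBD}})^{-1} - D_i^{-1}\bigr]R_i^{\text{CBD}} \succeq 0$ (with your $D_i$ the block diagonal of $A_i^{\text{CBD}}$) is never established: you correctly observe that the blockwise version is equivalent to the indefinite well-separated interaction part of $A_i^{\text{CBD}}$ being negative semidefinite, which fails for the Laplace kernel, and the proposed Woodbury expansion with ``domination after assembly, uniformly in $N$'' is a program rather than an argument. Second, even a complete proof of that comparison would only reduce the statement to \cref{conjecture}, which is itself unproven in the paper and rests solely on the numerics of \cref{t:schwarz}. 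So the proposal is a reasonable attack plan that correctly locates where the difficulty lives --- consistent with the authors' own decision to leave both statements as conjectures --- but it does not close either gap, and it should not be presented as a proof.
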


\subsection{Recursive skeletonization (RS) as subdomain solver} \label{ss:subdomain}

To construct the CBD preconditioner, we apply the RS algorithm to compute an approximate factorization of every subproblem $A_i$ in the CBD preconditioner. The resulting factorizations consist of block triangular factors, which can be inverted easily for  applying  the  preconditioner.
 Suppose the subproblem $A_i$ is associated with the subdomain 
\begin{equation} \label{e:oi}
\Omega_i = \bigcup_{k=1}^{\tilde{M}} \tilde{\mathcal{P}}_k,
\end{equation} 
where $\tilde{M} = M/D = M/2^d$, and $\tilde{\mathcal{P}}_k$'s are spatially distant regions of the same color; see an example in \cref{sf:overlap}. To simplify notations, we drop the subscript $i$ in $A_i$ for the rest of this section, and we refer interested readers to~\cite{ho2016hierarchical} for more details. 


Without loss of generality, write
\begin{equation} \label{e:block}
A =
\left(
\begin{array}{cc}
A_{pp} & A_{pq}  \\ 
A_{qp} & A_{qq}
\end{array}
\right),
\end{equation}
where rows/columns indices $p$ and $q$ correspond to $\tilde{\mathcal{P}}_1$ and the union of remaining regions in \cref{e:oi}, respectively. Assume $A_{qp}$ is numerically low-rank, we compute the following interpolative decomposition (ID)~\cite{cheng2005compression} :
\begin{equation} \label{e:id}
A_{qp} = 
\left(
\begin{array}{cc}
A_{qr} & A_{qs}  
\end{array}
\right)
\approx
A_{qs} 
\left(
\begin{array}{cc}
T_p & I
\end{array}
\right),
\end{equation}
where the index set $p$ is split into a \emph{redundant} subset $r$ and a \emph{skeleton} subset $s$ (up to a permutation) and
\begin{equation} \label{e:id_err}
\|A_{qr} - A_{qs}T\| \le \varepsilon \, \|A_{qr}\|
\end{equation}
for a prescribed (relative) accuracy $\varepsilon$. We call the indices in $s$ skeletons, which tend to be discretization points close to the boundary of $\tilde{\mathcal{P}}_1$. See a pictorial illustration in \cref{f:scbd}. Define the numerical rank $k = |s|$. Since $\tilde{\mathcal{P}}_1$ is well-separated from the remaining regions in \cref{e:oi}, we know
\begin{equation} \label{e:k1}
k = \bigO(1),
\end{equation}
according to standard fast multipole estimates~\cite{greengard1987fast,greengard1996new}. Computing the ID requires $\bigO(k \, |p| \, |q|)$ operations in general but can be accelerated to $\bigO(k \, |p|^2)$ operations using the so-called proxy trick~\cite{ying2004kernel,martinsson2019fast}. 


With \cref{e:id}, we have the approximation
\[
A \approx
\left(
\begin{array}{ccc}
A_{rr} & A_{rs} & T_p^\top A_{sq}  \\ 
A_{sr} & A_{ss} & A_{sq}  \\ 
A_{qs} T_p & A_{qs} & A_{qq}
\end{array}
\right),
\]
so we can eliminate the original $A_{rq}$ and $A_{qr}$ blocks without affecting the $A_{qq}$ block:
\begin{equation} \label{e:al}
L_p^\top A L_p \approx
\left(
\begin{array}{ccc}
B_{rr} & B_{rs} &   \\ 
B_{sr} & A_{ss} & A_{sq}  \\ 
 & A_{qs} & A_{qq}
\end{array}
\right),
\quad
L_p = 
\left(
\begin{array}{ccc}
I \\
- T_p & I \\
&& I
\end{array}
\right)
\end{equation}
where 
\begin{align*}
B_{rr} &= A_{rr} -  A_{rs}T_p - T_p^\top A_{sr} + T_p^\top A_{ss} T_p, \\
B_{sr} &= B_{rs}^\top = A_{sr} -  A_{ss}T_p.
\end{align*}
Let $B_{rr} = G_r^\top G_r$ be its Cholesky factorization, and we have the following approximate partial factorization
\begin{equation} \label{e:alu}
U_r^\top L_p^\top A L_p U_r \approx
\left(
\begin{array}{ccc}
I & &  \\ 
 & B_{ss} & A_{sq}  \\ 
 & A_{qs} & A_{qq}
\end{array}
\right),
\quad
U_r = 
\left(
\begin{array}{ccc}
G_r^{-1} & - B_{rr}^{-1} B_{rs} \\
& I \\
&& I
\end{array}
\right),
\end{equation}
where
\begin{equation} \label{e:b}
B_{ss} = A_{ss} - B_{sr} B_{rr}^{-1} B_{rs}.
\end{equation}
Notice that the $A_{qq}$, $A_{sq}$, and $A_{qs}$  blocks have not been modified. Define $R_1 = L_{p} U_{r}$ corresponding to the first region $\tilde{\mathcal{P}}_1$ in \cref{e:oi}, and we call $R_1$ an ``compress-then-eliminate" operator.

We continue the above ``compress-then-eliminate" step for the remaining block rows/columns corresponding to each of the remaining regions in \cref{e:oi}. The resulting Schur complement is a block matrix, where off-diagonal blocks remain sub-matrices in $A$. To be precise, suppose the indices of every block row/column are split into a redundant subset $r_i$ and a skeleton subset $s_i$ for $i=1,2,\ldots, \tilde{M}$, then there exists a permutation matrix $Q$ such that 
\begin{equation} \label{e:schur}
Q^\top (R_{\tilde{M}}^\top \ldots R_2^\top R_1^\top A R_1  R_2 \ldots R_{\tilde{M}}) Q \approx
\left(
\begin{array}{ccccc}
I & &  \\ 
 & B_{s_1 s_1} & A_{s_1 s_2} & \ldots & A_{s_1 s_{\tilde{M}}} \\ 
 & A_{s_2 s_1} & B_{s_2 s_2} & \ldots & A_{s_2 s_{\tilde{M}}} \\
 & \vdots & \vdots &  & \vdots \\
 & A_{s_{\tilde{M}} s_1} & A_{s_{\tilde{M}} s_2} & \ldots & B_{s_{\tilde{M}} s_{\tilde{M}}} 
\end{array}
\right),
\end{equation}
where the first diagonal block has size $\sum r_i$, and every $B_{s_i s_i}$ is computed analogously to \cref{e:b}. \cref{f:scbd} shows a pictorial illustration of the row/column indices in the original subproblem $A_i$ and the Schur complement in \cref{e:schur}.

To continue factorizing the Schur complement (approximately), we need a coarse partitioning of the domain $\Omega_i$. It can be obtained by merging spatially close regions in \cref{e:oi}. With a coarse overlapping domain, we repeat the above process recursively until only one coarse partition remains. As the last step, we factorize the remaining Schur complement  with Cholesky factorization directly. 
In practice, we compute a hierarchical partitioning of the problem domain $\Omega$, where the decomposition \cref{e:oi} is  at the first level and the last Schur complement factorized with Cholesky factorization is at the last level. For convenience, we make the following definition:

\begin{definition} \label{d:dof}
The degrees of freedom (DOFs) at every level in a hierarchical partitioning of the problem domain $\Omega$ are the discretization points that have not been processed in the RS algorithm. In particular, we have

\begin{itemize}
\item
DOFs at the first level: discretization points corresponding to $A$ in \cref{e:block};

\item
DOFs at the second level: discretization points corresponding to the Schur complement in \cref{e:schur};

\item
DOFs at the last level: discretization points corresponding to the last Schur complement factorized with Cholesky factorization.

\end{itemize}
\end{definition}

\begin{figure}
     \centering
     \hfill
     \begin{subfigure}[b]{0.45\textwidth}
         \centering
         \includegraphics[width=0.67\textwidth]{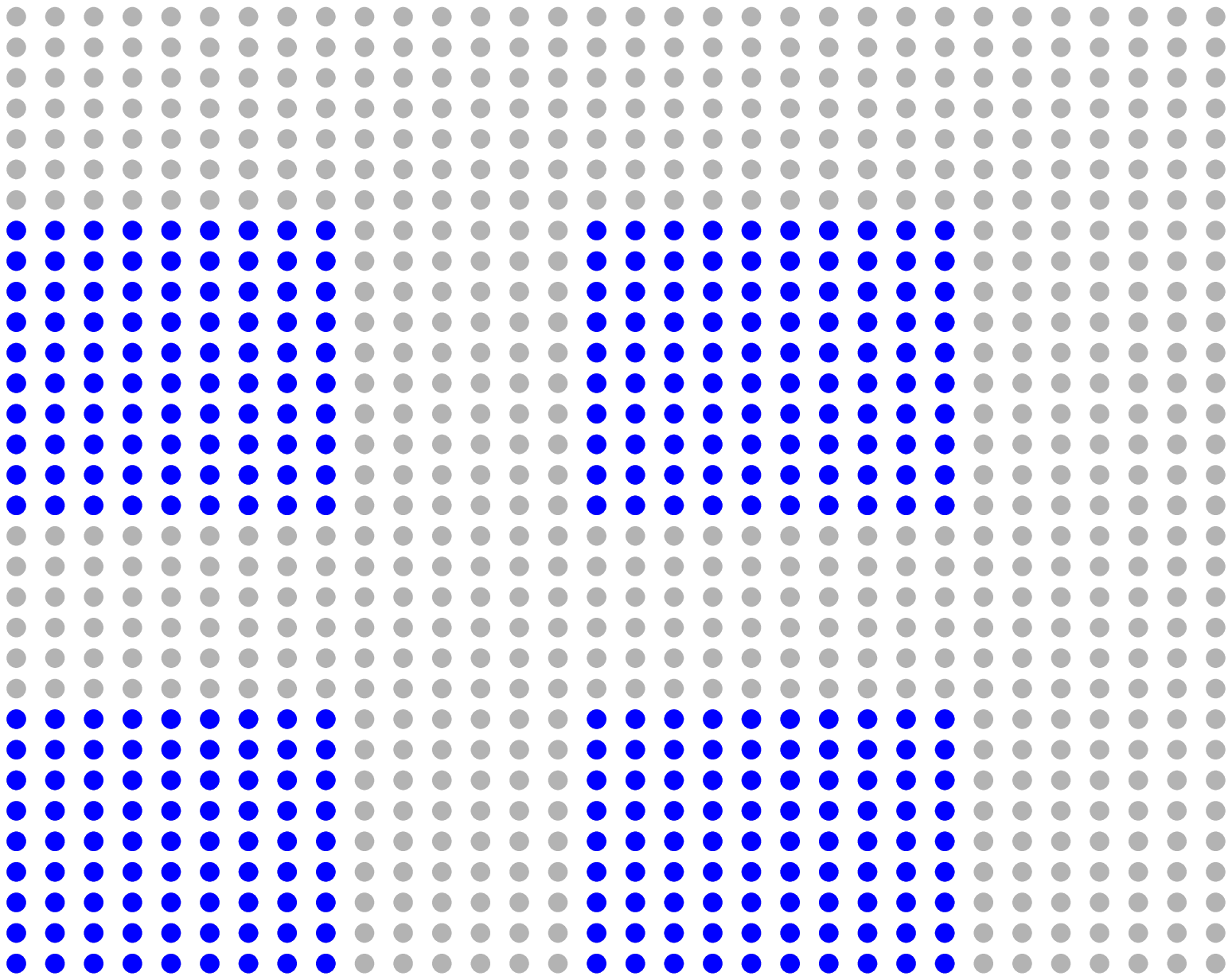}
         \caption{ DOFs at the first level.}
     \end{subfigure}
     \hfill
     \begin{subfigure}[b]{0.45\textwidth}
         \centering
         \includegraphics[width=0.67\textwidth]{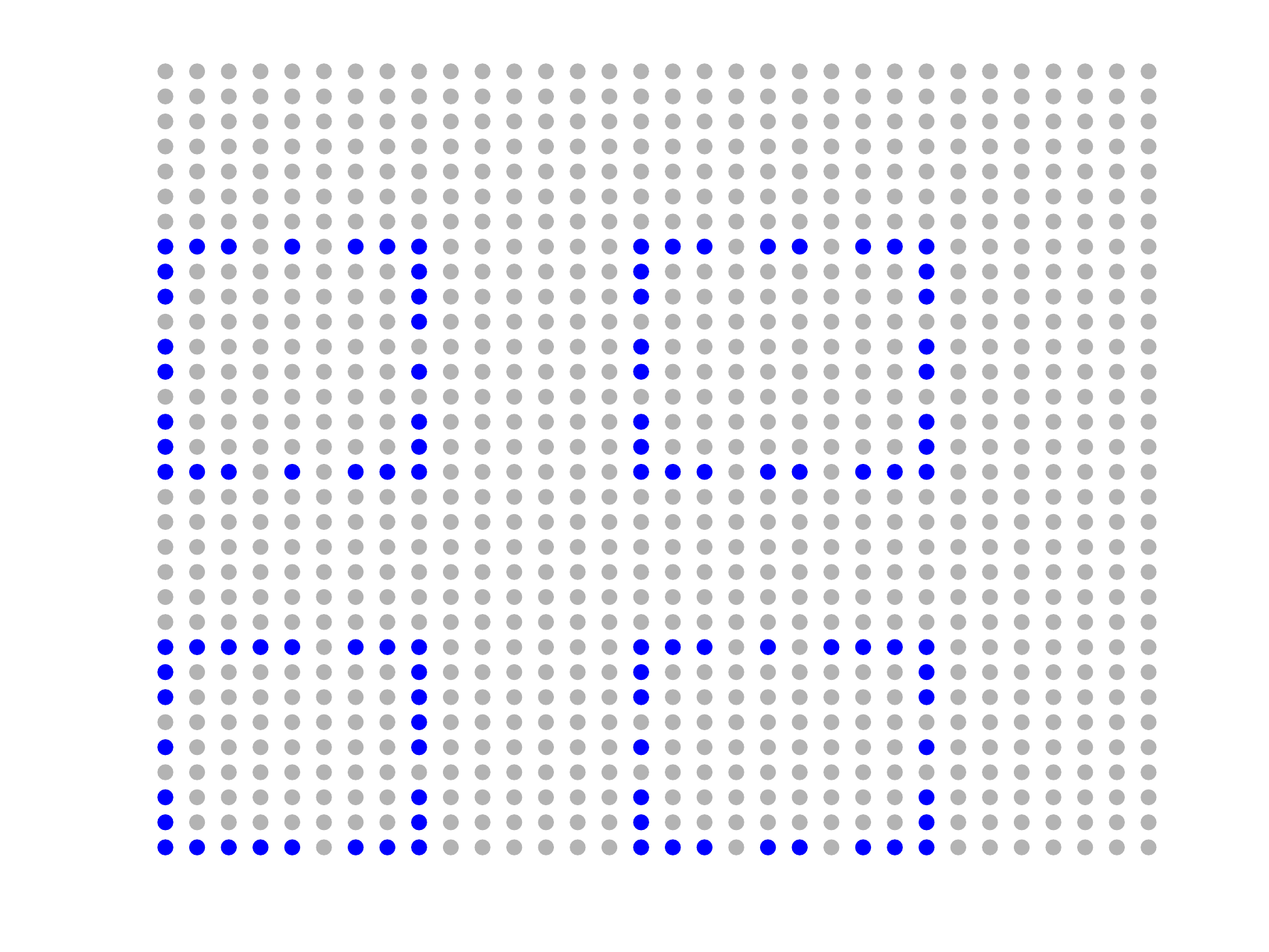}
         \caption{ DOFs at the second/last level.}
         \label{sf:sk}
     \end{subfigure}
     \hspace*{\fill}
     \caption{\em Illustration of  applying the RS to a subproblem in the CBD preconditioner ($N=32^2$, $M=4^2$, and $\varepsilon=10^{-6}$ in ID). 
     }
     \label{f:scbd}
\end{figure}

Assume in \cref{e:id} the numerical rank $|s_i| \equiv k$ is a constant for all $i$. The number of DOFs at the second level is
\begin{equation} \label{e:sk}
\sum_{i=1}^{\tilde{M}} s_i = k \tilde{M},
\end{equation}
which  immediately leads to the following:

\begin{theorem} \label{th:sk2}
In the CBD preconditioner, the number of DOFs at the second level is $\bigO(M)$, independent of the original problem size $N$.
\end{theorem}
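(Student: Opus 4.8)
The plan is to reduce the statement to a direct count of skeleton indices and then lean on the constant-rank estimate already recorded in \cref{e:k1}. Fix one subdomain $\Omega_i = \bigcup_{k=1}^{\tilde{M}}\tilde{\mathcal{P}}_k$ and its subproblem $A_i$, and recall from \Cref{ss:subdomain} that the DOFs surviving to the second level are exactly the skeleton sets $s_1,\ldots,s_{\tilde{M}}$ produced by the $\tilde{M}$ compress-then-eliminate steps, one per region. By \cref{e:sk} their total size is $\sum_{i=1}^{\tilde{M}}|s_i| = k\,\tilde{M}$ under the stated assumption $|s_i|\equiv k$. Hence it suffices to establish (i) $k = \bigO(1)$ with a constant independent of $N$, and (ii) $\tilde{M} = \bigO(M)$.

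The heart of the argument is (i). When region $\tilde{\mathcal{P}}_1$ is processed, the block compressed in \cref{e:id} couples the sources in $\tilde{\mathcal{P}}_1$ to the union $q$ of all remaining regions of $\Omega_i$. Because every region in $\Omega_i$ carries the same color, the coloring of the adjacency graph guarantees that these regions are pairwise non-adjacent; thus $q$ lies entirely in the far field of $\tilde{\mathcal{P}}_1$, and the pair satisfies the strong-admissibility condition $\ell = \bigO(L)$ of \cref{f:interaction}. For a non-oscillatory kernel, the field generated by the sources in $\tilde{\mathcal{P}}_1$ and evaluated anywhere in this far field is captured, to the prescribed accuracy $\varepsilon$, by a single multipole expansion whose number of terms depends only on $d$ and $\varepsilon$. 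This is precisely \cref{e:k1}: the $\varepsilon$-rank $k=|s_1|$ of the coupling block is $\bigO(1)$, and, crucially, this bound does not grow when the grid is refined, since refining $\tilde{\mathcal{P}}_1$ adds sources but no new multipole moments. The same reasoning applies verbatim to every region, so $|s_i| = \bigO(1)$ uniformly in $i$ and in $N$.

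For (ii), recall $\tilde{M} = M/D = M/2^d$, and that for a uniform partitioning the number of colors is $D = 2^d$, a fixed constant; hence $\tilde{M} = \bigO(M)$. Combining with (i), the number of second-level DOFs is $k\,\tilde{M} = \bigO(M)$ with a constant involving only $d$ and $\varepsilon$, i.e., independent of $N$, as claimed. The one point deserving care, and the only nontrivial ingredient, is the uniform-in-$N$ rank bound of step (i): I would stress that a \emph{single} expansion representing the field of $\tilde{\mathcal{P}}_1$ serves simultaneously for all $\tilde{M}-1$ far regions, rather than accumulating one $\bigO(1)$ contribution per region, which is exactly what the multipole estimate delivers, and that this estimate survives discretization so that $k$ remains bounded as $N\to\infty$. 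Everything else is the elementary arithmetic above.
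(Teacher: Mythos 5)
Your proof is correct and takes essentially the same route as the paper: the second-level DOFs are exactly the skeleton sets, so their count is $\sum_{i=1}^{\tilde{M}} |s_i| = k\,\tilde{M}$ as in \cref{e:sk}, with $k=\bigO(1)$ supplied by the strong-admissibility/multipole estimate \cref{e:k1} and $\tilde{M}=M/2^d=\bigO(M)$. Your remark that a \emph{single} outgoing expansion of $\tilde{\mathcal{P}}_1$ serves all $\tilde{M}-1$ far regions at once (so the rank does not accumulate per region or grow under grid refinement) is simply an unpacking of the standard FMM estimate the paper cites, not a different argument.
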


Recall that $M=D \tilde{M} = 2^d \tilde{M}$ is the number of partitions of the entire domain $\Omega$. Here, we do not require $M=\bigO(N)$ as in \Cref{s:schwarz}. In fact, we show how to choose $M$ to achieve optimal asymptotic scaling in \Cref{s:fast}.

\subsection{Comparison to the original RS factorization} \label{ss:compare}

The original RS factorization was applied to matrix $A$ in \cref{e:axb} directly, as other FDS's are typically used. To that end, the algorithm described in \Cref{ss:subdomain} remains the same except that we replace \cref{e:oi} with a partitioning (non-overlapping decomposition) of the entire problem domain as follows
\begin{equation} \label{e:part}
\Omega = \cup_{i=1}^M  \mathcal{P}_i, \quad \mathcal{P}_i \cap  \mathcal{P}_j = \emptyset, \text{ if } i\not=j.
\end{equation}
In particular, we view matrix $A$ as a block matrix as in \cref{e:block}, where $p$ and $q$ correspond to $\mathcal{P}_1$ and $\cup_{i=2}^M  \mathcal{P}_i$, respectively. Then, we compute the ID of  $A_{qp}$ as in \cref{e:id}. Since $\mathcal{P}_1$ is spatially adjacent to $\cup_{i=2}^M  \mathcal{P}_i$, we know the following according to Green's theorem:
\begin{equation} \label{e:rank}
k = \bigO \left( (N/M)^{{(d-1)}/{d}} \right).
\end{equation}
With a uniform partitioning, $\mathcal{P}_1$ contains $N/M$ discretization points, and the numerical rank $k$ scales as the perimeter or the surface area of $\mathcal{P}_1$ in 2D or 3D, respectively. Compare \cref{e:rank} to the numerical rank in \cref{e:k1} corresponding to applying the RS to a subproblem $A_i$ in the CBD preconditioner. 


After $A_{qp}$ is compressed, we follow \cref{e:al,e:alu}, and we repeat the ``compress-then-eliminate" step for each of the remaining block rows/columns and obtain the Schur complement as in \cref{e:schur}. Assuming the numerical rank $k$ is a constant for all blocks, the resulting Schur complement has size $k M$. To factorize the Schur complement, we construct a coarse partitioning of $\Omega$ by merging adjacent partitions in \cref{e:part} and recurse until a single partition is left. Finally, we apply Cholesky factorization to the last Schur complement directly. \cref{f:rs} shows a pictorial illustration of the entire algorithm.

\begin{figure}
     \centering
     \hfill
     \begin{subfigure}[b]{0.32\textwidth}
         \centering
         \includegraphics[width=\textwidth]{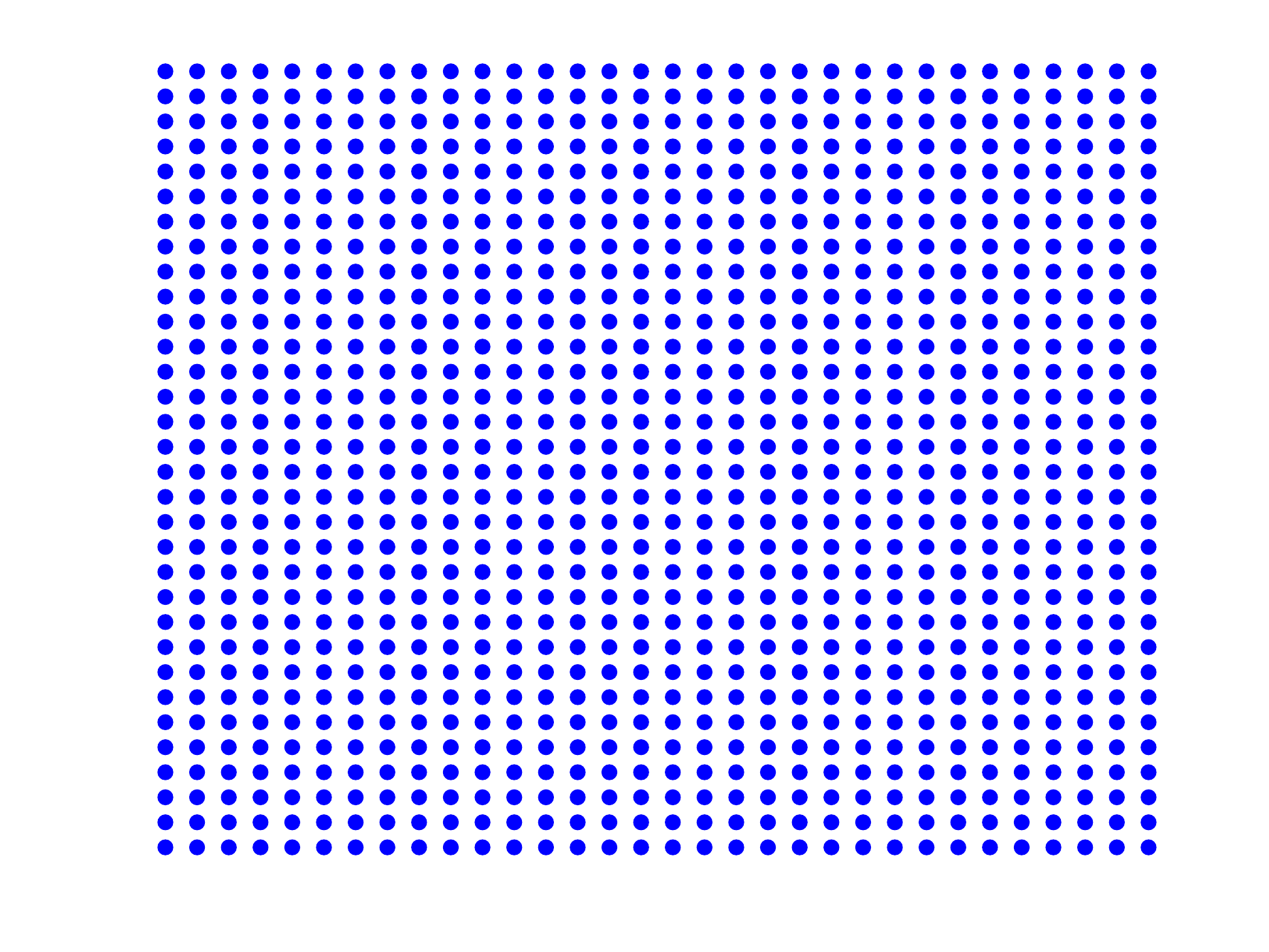}
         \caption{ DOFs at the first level.}
     \end{subfigure}
     \hfill
     \begin{subfigure}[b]{0.32\textwidth}
         \centering
         \includegraphics[width=\textwidth]{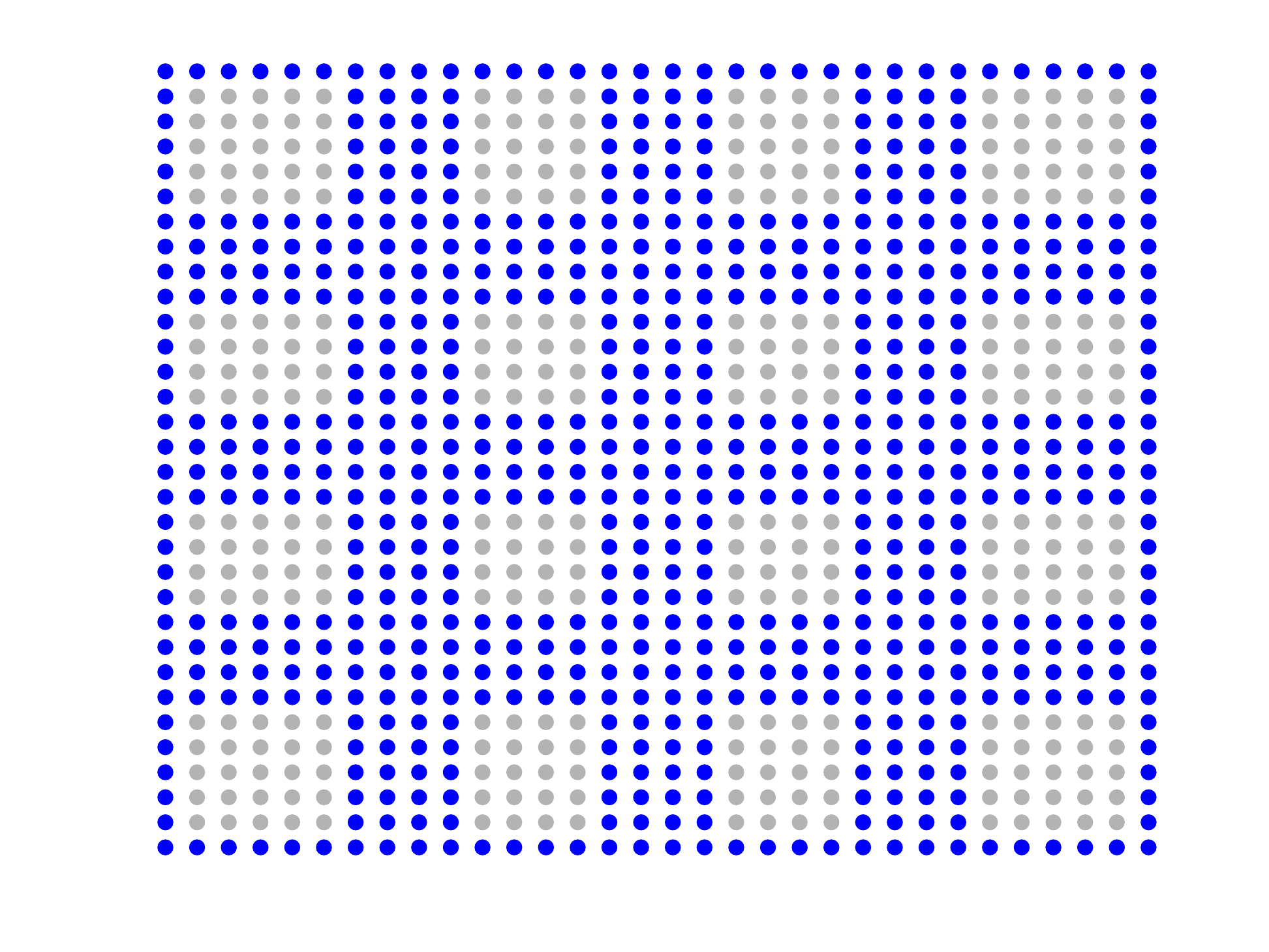}
         \caption{ DOFs at the second level.}
     \end{subfigure}
     \hfill
     \begin{subfigure}[b]{0.32\textwidth}
         \centering
         \includegraphics[width=\textwidth]{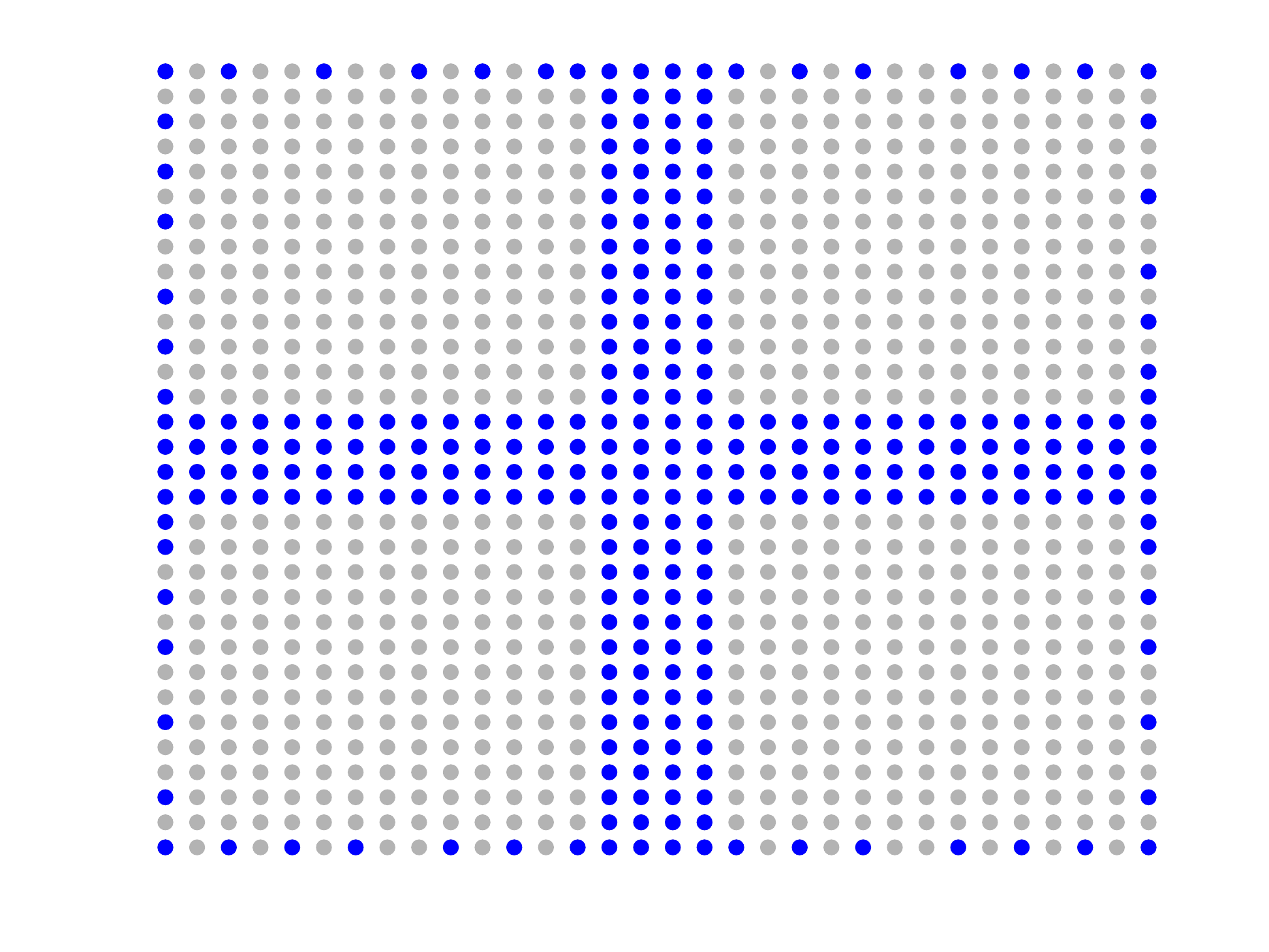}
         \caption{ DOFs at the last level.}
         \label{sf:sk_rs}
     \end{subfigure}
     \hfill
     \caption{\em Illustration of applying RS to $A$ ($N=32^2$, $M=4^2$, and $\varepsilon=10^{-6}$ in ID). 
     }
     \label{f:rs}
\end{figure}

Let $S$ be the number of DOFs at the last level, and we have the following~\cite{ho2016hierarchical}:
\begin{equation} \label{e:s}
S = \bigO \left( N^{(d-1)/d} \right).
\end{equation}
Since work required to factorize the last Schur complement typically dominates that of the RS algorithm, we have the following~\cite{ho2016hierarchical}:
\begin{theorem} 
\label{th:rs}
The construction cost of applying the RS algorithm to $A$ in \cref{e:axb} is 
\[
t_f = \bigO \left( N^{3(d-1)/d} \right)
\]
for $d=2,3$, and storing/applying the approximate factorization requires
\[
m_f = t_a = 
\left\{
\begin{array}{lc}
\bigO \left(N \log N \right), & d=2, \\
\bigO \left( N^{4/3} \right), & d = 3.
\end{array}
\right.
\]
\end{theorem}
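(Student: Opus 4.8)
The plan is to establish the cost estimates in \cref{th:rs} by carefully accounting for the work and storage at each level of the hierarchical partitioning, following the standard recursive-skeletonization analysis. First I would set up the geometric bookkeeping: at the first level there are $M$ partitions each with $N/M = \bigO(1)$ points, and at each subsequent coarser level the partitions are merged (say $2^d$ at a time) so that the number of partitions drops by a factor of $2^d$ while the linear size of each partition doubles. The key input is the rank estimate \cref{e:rank}, $k = \bigO((N/M)^{(d-1)/d})$ interpreted per level: a partition of linear size $w$ has a skeleton of size $\bigO(w^{d-1})$, the surface-area scaling dictated by Green's theorem. I would then track the DOF count entering each level, which is governed by the accumulated skeletons, culminating in the top-level Schur complement of size $S = \bigO(N^{(d-1)/d})$ as stated in \cref{e:s}.

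Next I would bound the three quantities separately. For the construction cost $t_f$, I would sum over levels the cost of the compress-then-eliminate steps. Using the accelerated ID cost $\bigO(k\,|p|^2)$ from \Cref{ss:subdomain}, each elimination at a level with partition-skeleton size $\bigO(w^{d-1})$ costs $\bigO(w^{3(d-1)})$, and summing over the $\bigO((N^{1/d}/w)^d)$ partitions at that level gives a per-level contribution. I would argue — as is standard and asserted in the excerpt — that the dominant contribution comes from the final dense Cholesky factorization of the top-level Schur complement, which costs $\bigO(S^3) = \bigO(N^{3(d-1)/d})$, yielding $t_f = \bigO(N^{3(d-1)/d})$. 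For the memory $m_f$ and apply cost $t_a$, I would sum the sizes of the stored triangular factors and skeleton blocks across levels; the geometric decay of DOFs makes the lower levels contribute $\bigO(N)$ in 2D (with a $\log N$ factor from the number of levels, $\bigO(\log N)$), while in 3D the surface-to-volume growth makes the top level dominant, giving $\bigO(N^{4/3})$.

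The main obstacle is making the level-by-level summation rigorous rather than merely heuristic: one must verify that the per-level rank estimate \cref{e:rank} genuinely propagates to the \emph{Schur complement} at each coarser level, i.e., that after elimination the remaining skeleton blocks still obey the surface-area rank bound. The excerpt itself flags this as the delicate point (it notes that constant-rank or surface-area-rank behavior on the Schur complement ``appears to be true in practice but lacks firm theoretical support''), but for weak-admissibility RS the needed estimate is exactly the one established in~\cite{ho2016hierarchical}, so I would invoke that result directly rather than reprove it. Given the rank propagation, the remaining work is the routine geometric-series summation: I would verify that in 2D the levels contribute comparably (hence the logarithmic factor) whereas in 3D the top level dominates, and that in both dimensions the cubic cost of the final Cholesky step controls $t_f$. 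The cleanest exposition simply cites \cref{e:s} and~\cite{ho2016hierarchical} for the rank/DOF behavior and then presents the three summations, confirming the stated exponents $3(d-1)/d$, and $N\log N$ versus $N^{4/3}$.
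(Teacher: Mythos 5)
Your proposal is correct and matches the paper's own (very brief) justification: the paper simply cites~\cite{ho2016hierarchical} and notes that the Cholesky factorization of the last-level Schur complement of size $S=\bigO(N^{(d-1)/d})$ dominates, which is precisely your dominant-term argument, and both you and the paper defer the Schur-complement rank-propagation to the cited reference. Your level-by-level summations (geometric decay dominated by the root for $t_f$ in both dimensions, balanced levels giving $N\log N$ in 2D versus a root-dominated $N^{4/3}$ in 3D for $m_f=t_a$) reproduce the standard analysis correctly.
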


Next, we derive results analogous to \cref{e:s,th:rs} for the CBD preconditioner.  According to  \cref{th:sk2}, the number of DOFs at the second level is $\bigO(M)$ in the CBD preconditioner. The DOFs corresponds to the Schur complement in \cref{e:schur}, to which we apply the RS algorithm recursively. If we apply \cref{e:s} to the Schur complement of size $\bigO(M)$, it is easy to see the following:

\begin{theorem} \label{th:sk}
In the CBD preconditioner, the number of  DOFs at the last level is  (upper bounded by)
\[
S=\bigO \left( M^{(d-1)/d} \right).
\]
\end{theorem}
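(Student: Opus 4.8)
The plan is to recognize the second-level Schur complement in \cref{e:schur} as a reduced problem of the same data-sparse type as the original operator, but supported on only $\bigO(M)$ degrees of freedom, and then to invoke the standard RS last-level estimate \cref{e:s} with $N$ replaced by $\bigO(M)$.

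I would begin from \cref{th:sk2}: after the first ``compress-then-eliminate'' sweep over the $\tilde{M}$ well-separated regions of \cref{e:oi}, the strong-admissibility bound \cref{e:k1} leaves $k=\bigO(1)$ skeletons per region, so by \cref{e:sk} the Schur complement carries $k\tilde{M}=\bigO(M)$ DOFs (using $\tilde{M}=M/2^d$ with $D=2^d$ constant). These retained skeletons are a subset of the original discretization points, concentrated near the boundaries of the $\tilde{\mathcal{P}}_k$'s; since the regions of a single color form a regularly-spaced sub-lattice covering $\Omega=[0,1]^d$, the surviving $\bigO(M)$ points are quasi-uniformly distributed over $\Omega$ with effective spacing $\bigO(M^{-1/d})$.

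The next step is to certify that running RS on this Schur complement is structurally identical to running the original RS of \cref{ss:compare} on a quasi-uniform problem of size $\bigO(M)$. The key point is that the off-diagonal blocks of \cref{e:schur} remain sub-matrices of $A$, so they automatically inherit the weak-admissibility rank estimate \cref{e:rank}; no separate rank assumption on the Schur complement is needed, consistent with the assumption-free RS complexity of \cref{th:rs}. From the second level onward the coarsening merges spatially close regions that are no longer well-separated, so the recursion reverts to the usual weak-admissibility regime of \cref{ss:compare}. Applying \cref{e:s} to this size-$\bigO(M)$ problem then yields that the final Schur complement factorized by Cholesky has $S=\bigO(M^{(d-1)/d})$ DOFs.

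The main obstacle is the geometric bookkeeping above: making precise that the $\bigO(M)$ second-level skeletons behave, for the purposes of the coarse hierarchical partitioning, like a quasi-uniform grid of $\bigO(M)$ points on $\Omega$, so that the ``surface'' scaling driving \cref{e:s} reproduces the exponent $(d-1)/d$ with $M$ in place of $N$. I would discharge this by appealing to the uniform-partitioning assumption of \cref{s:setup} (so that skeletons concentrate on region boundaries at a single uniform scale) together with the fact that the off-diagonal rank structure is inherited verbatim from $A$; the remaining estimate is then a direct reuse of the argument behind \cref{e:s,th:rs} from \cite{ho2016hierarchical}.
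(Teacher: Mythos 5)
Your proposal is correct and follows essentially the same route as the paper: invoke \cref{th:sk2} to reduce to a Schur complement with $\bigO(M)$ DOFs whose off-diagonal blocks are still sub-matrices of $A$, then apply the RS last-level estimate \cref{e:s} with $N$ replaced by $\bigO(M)$. The extra geometric bookkeeping you supply (quasi-uniformity of the retained skeletons) is a reasonable elaboration of a step the paper treats as immediate, but it does not change the argument.
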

To construct the CBD preconditioner, we apply the RS algorithm to every subproblem $A_i$ for $i=1,2,\ldots,2^d$. In general, suppose we have an FDS with $C N^\alpha$ construction cost, where $C$ is a constant. Then, we immediately have 
\begin{theorem} \label{th:cbd}
The construction cost of the CBD preconditioner is 
\[
2^d \cdot \tilde{C}  \left( \frac{N}{2^d} \right)^\alpha =  \frac{\tilde{C}}{C \cdot 2^{(\alpha-1)d}} \, C N^\alpha < C N^\alpha,
\]
where $d=2$ or $3$, $\tilde{C} < C$, and $\alpha>1$. In particular, $\alpha = 3(d-1)/d$ for the RS factorization.
\end{theorem}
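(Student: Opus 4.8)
The plan is to reduce the claim to a short calculation once two structural facts are in place: the number of subproblems and the size of each one. First I would recall from the definition of the preconditioner in \cref{e:ad} together with the CBD in \cref{e:second} that the construction is carried out by applying the chosen FDS \emph{independently} to each of the $D=2^d$ subproblems $A_i$, so the total construction cost is simply the sum $\sum_{i=1}^{2^d}$ of the per-subdomain costs. Next I would establish that under a uniform partitioning each subdomain $\Omega_i=\bigcup_{k=1}^{\tilde{M}}\tilde{\mathcal{P}}_k$ in \cref{e:oi} carries $\bigO(N/2^d)$ discretization points: there are $\tilde{M}=M/2^d$ regions, each holding $\bigO(N/M)$ points, and the single extra overlap layer contributes only a lower-order term. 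Thus each subproblem is an instance of nominal size $N/2^d$ for the FDS.

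Given an FDS with construction cost $C n^\alpha$ on a problem of size $n$, the per-subdomain cost is therefore $\tilde{C}\,(N/2^d)^\alpha$. The one genuinely substantive point is the claim $\tilde{C}<C$: the prefactor for a CBD subproblem must be strictly smaller than the prefactor for the global problem of the same nominal size. I would justify this by appealing to the admissibility contrast already developed in \cref{ss:subdomain,ss:compare}: within a subdomain the regions $\tilde{\mathcal{P}}_k$ are spatially distant (strong admissibility), so the leaf-level interaction ranks are constant by \cref{e:k1}, whereas the global problem has spatially adjacent leaf regions with growing ranks as in \cref{e:rank}. Smaller skeleton sets at every level (cf.\ \cref{th:sk2,th:sk}) propagate into a smaller final Schur complement and hence into a strictly smaller leading constant, giving $\tilde{C}<C$.

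With these two facts the conclusion is immediate. Summing the per-subdomain costs and using $2^d/(2^d)^\alpha = 2^{-(\alpha-1)d}$ gives
\[
\sum_{i=1}^{2^d}\tilde{C}\left(\frac{N}{2^d}\right)^\alpha
= 2^d\,\tilde{C}\left(\frac{N}{2^d}\right)^\alpha
= \frac{\tilde{C}}{2^{(\alpha-1)d}}\,N^\alpha
= \frac{\tilde{C}}{C\cdot 2^{(\alpha-1)d}}\,C N^\alpha .
\]
Since $\alpha>1$ forces $2^{(\alpha-1)d}>1$, and since $\tilde{C}<C$, the bracketed prefactor $\tilde{C}/(C\,2^{(\alpha-1)d})$ is strictly less than one, which yields the stated strict inequality $<C N^\alpha$. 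Finally I would record that $\alpha=3(d-1)/d$ is the relevant exponent for the RS factorization by \cref{th:rs}.

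The main obstacle is not the arithmetic but the rigorous justification of $\tilde{C}<C$. The algebraic collapse of $2^d$ subproblems of size $N/2^d$ is routine; what requires care is that the strong-admissibility rank bound \cref{e:k1} indeed translates into a strictly smaller \emph{constant} (and not merely the same asymptotic order) in the FDS cost model. I would treat $\tilde{C}<C$ as inherited from the preceding rank analysis rather than reproving the full RS complexity, so that the present statement follows immediately as advertised.
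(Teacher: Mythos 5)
Your proposal is correct and follows essentially the same route as the paper: the paper states the theorem as an immediate consequence of applying the FDS to each of the $2^d$ subproblems of size $N/2^d$ and justifies $\tilde{C}<C$ in one sentence by the drop in off-diagonal rank from \cref{e:rank} to \cref{e:k1}. Your write-up reproduces this logic (including the arithmetic $2^d/(2^d)^\alpha=2^{-(\alpha-1)d}$) and is appropriately candid that $\tilde{C}<C$ is inherited from the rank discussion rather than proved rigorously, which matches the paper's own level of justification.
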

%
%
%
%
%
%
%
%
Here, the fact that $\tilde{C} < C$ comes from the decrease of off-diagonal rank from \cref{e:rank} to \cref{e:k1}. To summarize, \cref{t:compare} shows the key differences between the original RS algorithm and the CBD preconditioner.

\begin{table}
    \centering \small 
    \caption{\em Comparison between the original RS factorization~\cite{ho2016hierarchical} (applied to $A$ directly) and the CBD preconditioner (employing RS for every subproblem $A_i$). Results of RS are from \cref{e:rank}, \cref{e:s}, and \cref{th:rs}. Results of CBD are from \cref{e:k1}, \cref{th:sk,th:cbd}. The problem dimension $d=2$ or 3.}
    \label{t:compare}
    \centering
    \begin{tabular}{ccc} 
    \toprule
          &  RS    &  CBD    \\ \midrule 
       Off-diagonal rank at first level &   $\bigO \left( \left(N/M\right)^{\frac{d-1}{d}} \right)$    &   $\bigO(1)$   \\
        \# DOFs at last level  & $\bigO \left(N^{\frac{d-1}{d}} \right)$ & $\bigO \left(M^{\frac{d-1}{d}} \right)$  \\
       Construction cost &  $C N^{\frac{3(d-1)}{d}}$ & $ \frac{\tilde{C}}{2^{2d - 3}}  N^{\frac{3(d-1)}{d}}$  \\
    \bottomrule
    \end{tabular}
\end{table}


\begin{remark} \label{rmk:schwarz}
We can fix $D = 2^d$ and employ an FDS as the subdomain solver in the Schwarz preconditioner. Following the analysis above, it is obvious that the associated construct cost is $C N^\alpha / 2^{(\alpha-1)d}$. In other words, the cost is higher by at least a factor of $C/\tilde{C}$ than that of the CBD preconditioner.
\end{remark}

\subsection{Asymptotically faster algorithm} \label{s:fast}


Unlike that in the RS method,  the number of DOFs at the last level in the CBD preconditioner depends  on only the number of  partitions $M$ according to  \cref{th:sk}. {In this section, we show that the construction of the CBD preconditioner can be  asymptotically accelerated if we scale $M$ sublinearly to the problem size $N$.}

Let us revisit the construction of the CBD preconditioner. In particular, we decompose the factorization cost into two terms: one for the first level and the other for all remaining levels. As previously mentioned, we apply the ``compress-then-eliminate'' step for $\bigO(M)$ times at the first level, where computing an ID and the subsequent elimination both require $\bigO\left((N/M)^3\right)$. If we plug in the asymptotic complexities of the RS in \cref{th:rs} for the $\bigO(M)$ DOFs at the second level, we obtain the following:

\begin{theorem}
Suppose we apply the RS solver to every subdomain in the CBD preconditioner. Define $P \equiv \bigO(N/M)$.
The following complexities holds:
\begin{equation} \label{e:complexity}
t_f = \bigO \left( P^3 M + M^{3(d-1)/d} \right),
\quad
m_f = t_a = 
\left\{
\begin{array}{lc}
\bigO \left( P^2 M + M \log M \right), & d=2 \\
\bigO \left( P^2 M + M^{4/3} \right), & d = 3
\end{array}
\right.
\end{equation}
\end{theorem}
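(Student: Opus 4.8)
The plan is to decompose the total factorization cost of the CBD preconditioner into two additive contributions, corresponding exactly to the two-level analysis already set up in \Cref{ss:subdomain}: the work done at the first level (processing the $\bigO(M)$ finest regions) and the work done at all remaining levels (factorizing the resulting Schur complement). By \cref{th:cbd}, the $2^d$ subproblems are handled in parallel in the complexity accounting, so it suffices to track the cost per subproblem and note that multiplying by the constant $2^d$ does not change the asymptotics; equivalently, since $P \equiv \bigO(N/M)$ is the number of points per region, I would simply track the aggregate over all $\bigO(M)$ regions directly.

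For the first-level term, I would invoke the cost estimates stated beneath \cref{e:id}: each ``compress-then-eliminate'' step requires computing an ID plus the subsequent elimination, both of which cost $\bigO(P^3)$ where $P=\bigO(N/M)$ is the block size $|p|$ (using the accelerated $\bigO(k|p|^2)$ ID cost with $k=\bigO(1)$ from \cref{e:k1}, together with the dense elimination producing the updates in \cref{e:al,e:alu,e:b}). Since this step is performed $\bigO(M)$ times across all finest-level regions, the first-level cost is $\bigO(P^3 M)$; the corresponding memory and apply cost per step is $\bigO(P^2)$, giving the $\bigO(P^2 M)$ term in $m_f=t_a$. These account for the first summands in \cref{e:complexity}.

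For the remaining levels, the key observation is \cref{th:sk2}: after the first level the number of DOFs collapses to $\bigO(M)$, independent of $N$. The Schur complement in \cref{e:schur} therefore behaves like an RS factorization problem on a system of effective size $\bigO(M)$. I would then simply substitute $N\to M$ into the RS complexities of \cref{th:rs}, yielding $\bigO(M^{3(d-1)/d})$ for construction and $\bigO(M\log M)$ (for $d=2$) or $\bigO(M^{4/3})$ (for $d=3$) for memory and apply cost. Adding the first-level and remaining-level contributions gives precisely \cref{e:complexity}.

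The main obstacle — and the step requiring the most care rather than routine bookkeeping — is justifying that substituting $N\to M$ into \cref{th:rs} is legitimate, i.e., that the recursion on the Schur complement genuinely inherits the RS rank and DOF behavior on a domain of $\bigO(M)$ points. This rests on the implicit assumption that the constant-rank estimate \cref{e:k1} persists through the coarser levels (the same ``rank behavior on the Schur complement'' caveat flagged in the introduction), and that merging spatially close regions to build the coarse partitioning preserves the geometric separation needed for those estimates. I would state this as the standing assumption under which \cref{e:s,th:rs} were derived and emphasize that, once granted, the two terms in \cref{e:complexity} follow by direct addition of the two-level costs.
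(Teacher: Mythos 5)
Your proposal is correct and follows essentially the same route as the paper: split the cost into the first-level ``compress-then-eliminate'' work ($\bigO(M)$ steps at $\bigO(P^3)$ each, with $\bigO(P^2)$ storage per step) plus the cost of recursing on the $\bigO(M)$-DOF Schur complement of \cref{th:sk2}, into which the RS complexities of \cref{th:rs} are substituted with $N\to M$. Your explicit flagging of the standing rank assumption on the Schur complement is a point the paper leaves implicit, but it does not change the argument.
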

We note that the number of partitions $M$ is a parameter in \cref{e:complexity}, so we can optimize the complexities to obtain asymptotically more efficient methods than the RS solver as the following corollary shows:

\begin{corollary} \label{th:optimal}
In 2D, let $M=\bigO(N^{6/7})$, and we have
\begin{align*} 
t_f = \bigO \left( N^{9/7} \right), \quad
m_f = t_a = \bigO \left( N^{8/7} \right).
\end{align*}
In 3D, let $M=\bigO(N^{3/4})$, and we have
\begin{align*} 
t_f = \bigO \left( N^{3/2} \right), \quad
m_f = t_a = \bigO \left( N^{5/4} \right),
\end{align*}
\end{corollary}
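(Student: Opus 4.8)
The plan is to treat \Cref{th:optimal} as a pure optimization problem: we substitute $P = \bigO(N/M)$ into the complexity formulas of \cref{e:complexity}, view the resulting expressions as functions of the single free parameter $M$, and choose $M$ to balance the competing terms. First I would focus on the construction cost $t_f = \bigO(P^3 M + M^{3(d-1)/d})$. Writing $P = N/M$ gives $t_f = \bigO(N^3/M^2 + M^{3(d-1)/d})$. The first term decreases in $M$ while the second increases, so the minimizer occurs where the two are balanced, i.e. $N^3/M^2 = M^{3(d-1)/d}$. Solving this single scalar equation for $M$ yields the stated exponents: in 2D, $3(d-1)/d = 3/2$, so $N^3/M^2 = M^{3/2}$ gives $M^{7/2} = N^3$, hence $M = \bigO(N^{6/7})$; in 3D, $3(d-1)/d = 2$, so $N^3/M^2 = M^2$ gives $M^4 = N^3$, hence $M = \bigO(N^{3/4})$.

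Next I would substitute these optimal values of $M$ back into $t_f$ to recover the advertised construction costs. In 2D, plugging $M=\bigO(N^{6/7})$ into either balanced term gives $N^3/N^{12/7} = N^{9/7}$, matching the claim $t_f = \bigO(N^{9/7})$. In 3D, plugging $M=\bigO(N^{3/4})$ gives $N^3/N^{3/2} = N^{3/2}$, matching $t_f = \bigO(N^{3/2})$. Then I would repeat the substitution for the memory/apply cost $m_f = t_a$. Here the expression is $\bigO(P^2 M + M\log M)$ in 2D and $\bigO(P^2 M + M^{4/3})$ in 3D, with $P^2 M = N^2/M$. Because $M$ is already fixed by the $t_f$ balance, this is a direct plug-in rather than a second optimization: in 2D, $N^2/N^{6/7} = N^{8/7}$ dominates the $M\log M = \bigO(N^{6/7}\log N)$ term, giving $t_a = \bigO(N^{8/7})$; in 3D, $N^2/N^{3/4} = N^{5/4}$ and $M^{4/3} = N^{(3/4)(4/3)} = N$, so the $P^2 M$ term dominates and $t_a = \bigO(N^{5/4})$.

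The one subtlety I would flag, rather than a genuine obstacle, is verifying that the value of $M$ chosen to balance $t_f$ also yields the stated $t_a$ (i.e. that we are not forced to pick different $M$ for the two quantities). The corollary commits to a single $M$ per dimension, so I would confirm that each $t_a$ term evaluated at the $t_f$-optimal $M$ reproduces the claimed bound, which the arithmetic above confirms. I would also note in passing that since these are the dominant costs and the preconditioner is used within PCG with an $\bigO(1)$ iteration count (by \Cref{th:max} together with the conjectured lower bound on $\lambda_{\min}$), the per-solve cost is governed by $t_a$, so the reported $m_f = t_a$ exponents are the operative ones. The main thing to get right is simply careful bookkeeping of the exponents $3(d-1)/d$ and $(d-1)/d$ for $d=2,3$; there is no deep difficulty, as the argument reduces to elementary balancing of power-law terms.
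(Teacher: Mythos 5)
Your proposal is correct and follows essentially the same route the paper takes (and makes explicit in the generalized theorem that follows the corollary): balance the two terms of $t_f = \bigO(N^3/M^2 + M^{3(d-1)/d})$ to get $M = N^{3/(\alpha+2)}$ with $\alpha = 3(d-1)/d$, then substitute this $M$ back into both $t_f$ and $m_f = t_a$. The exponent bookkeeping checks out in both dimensions, and your observation that the $M\log M$ (resp.\ $M^{4/3}$) term is dominated by $P^2M$ at the chosen $M$ is the only verification the paper leaves implicit.
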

Similarly, we can optimize the memory $m_f$ (or equivalently $t_a$) in \cref{e:complexity} with respect to $M$. In general, suppose we have an FDS of work complexity $\bigO(N^\alpha)$. \cref{th:optimal} can be generalized to the following:

\begin{theorem}
Suppose we have an FDS with construction cost $\bigO(N^\alpha)$ when applied to $A$ in \cref{e:axb}. The optimal complexity to construct the CBD preconditioner employing the FDS as a subdomain solver is
\[
\min_M \bigO \left( P^3M + M^\alpha \right) =\bigO \left( N^{3\alpha/(2+\alpha)} \right) < \bigO \left( N^\alpha \right),
\]
where the minimum is obtained when $M = N^{{3}/{(\alpha+2)}}$.
\end{theorem}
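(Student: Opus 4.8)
The plan is to treat $M$ as a free continuous parameter and minimize the stated construction cost $\bigO(P^3 M + M^\alpha)$ subject to the constraint $P = N/M$, then verify that the two terms balance at the claimed value of $M$. First I would substitute $P = N/M$ into the first term, turning the objective into $g(M) = N^3 M^{-2} + M^\alpha$ (up to the constant in $P = \bigO(N/M)$). The key observation is that $g$ is a sum of a decreasing term ($N^3 M^{-2}$, which falls as $M$ grows) and an increasing term ($M^\alpha$, which rises as $M$ grows), so the minimum over $M \in (0, N)$ is attained either at an interior stationary point or on the boundary; I would argue the interior critical point dominates for the regime of interest.

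Next I would locate the stationary point by setting $g'(M) = -2 N^3 M^{-3} + \alpha M^{\alpha - 1} = 0$, which gives $\alpha M^{\alpha + 2} = 2 N^3$, hence $M = \bigO\bigl(N^{3/(\alpha+2)}\bigr)$, matching the claim. Substituting this $M$ back, both terms become $\bigO\bigl(N^{3\alpha/(\alpha+2)}\bigr)$: indeed $M^\alpha = \bigO\bigl(N^{3\alpha/(\alpha+2)}\bigr)$ directly, and $N^3 M^{-2} = N^3 \cdot \bigO\bigl(N^{-6/(\alpha+2)}\bigr) = \bigO\bigl(N^{(3\alpha+6-6)/(\alpha+2)}\bigr) = \bigO\bigl(N^{3\alpha/(\alpha+2)}\bigr)$, confirming the two terms balance as expected at an optimum of this min-max shape. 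This establishes the equality $\min_M \bigO(P^3 M + M^\alpha) = \bigO\bigl(N^{3\alpha/(\alpha+2)}\bigr)$.

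Finally I would verify the strict inequality $N^{3\alpha/(\alpha+2)} < N^\alpha$. Since $N > 1$ asymptotically, this reduces to comparing exponents: $\tfrac{3\alpha}{\alpha+2} < \alpha$ is equivalent (dividing by $\alpha > 0$) to $\tfrac{3}{\alpha+2} < 1$, i.e.\ $3 < \alpha + 2$, i.e.\ $\alpha > 1$. Because any FDS of genuine interest has superlinear construction cost $\alpha > 1$ (for RS, $\alpha = 3(d-1)/d \in \{3/2, 2\}$ for $d = 2, 3$, both exceeding $1$), the inequality holds, so the CBD construction is asymptotically cheaper than applying the FDS directly to $A$. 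I would also remark that the chosen $M$ stays in the valid range $1 \le M \le N$: since $3/(\alpha+2) \in (0,1)$ for $\alpha > 1$, we have $M = \bigO(N^{3/(\alpha+2)})$ growing sublinearly in $N$, consistent with the sublinear-$M$ regime motivating this section.

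The main obstacle is essentially bookkeeping rather than depth: one must be careful that the omitted constants in $P = \bigO(N/M)$ and in the per-block work do not affect the exponent, and that the minimization genuinely attains its optimum in the interior (as opposed to a boundary value of $M$) for the relevant range of $\alpha$. A minor subtlety is that $M$ must be an integer (indeed a perfect $d$-th power for a uniform partitioning), so strictly speaking one rounds the continuous optimizer to the nearest feasible $M$; I would note that such rounding changes the cost only by a constant factor and hence leaves the asymptotic exponent unchanged.
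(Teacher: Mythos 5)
Your proposal is correct and follows essentially the same route the paper intends: substitute $P=N/M$ into the cost from the preceding theorem, balance the decreasing term $N^3M^{-2}$ against the increasing term $M^\alpha$ to get $M=N^{3/(\alpha+2)}$ and cost $N^{3\alpha/(\alpha+2)}$, and use $\alpha>1$ for the strict inequality (consistent with the 2D/3D cases in Corollary~\ref{th:optimal}). The extra remarks about integrality of $M$ and interior versus boundary optima are harmless bookkeeping that the paper omits.
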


\section{Numerical results} \label{s:result}

In this section, we benchmark the CBD preconditioner and compare it to the RS factorization~\cite{ho2016hierarchical} on problems in 2D and in 3D. 
Recall the problem and the domain decompositions in \Cref{s:setup}. 
The RS solver was applied to solve the global problem \cref{e:axb} directly, while  the CBD preconditioner employed the RS solver for every subdomain (4 and 8 subdomains in 2D and in 3D, respectively). We fixed $\varepsilon = \num{1e-3}$ in \cref{e:id_err} as the relative accuracy of low-rank approximations computed with the ID\footnote{The radius of the proxy surface used to accelerate the computation of an ID  was set to 1.5 as suggested in~\cite{ho2016hierarchical}.}.
We used the PCG to solve \cref{e:axb}, where the matrix-vector (matvec) product with $A$ was carried out via the fast Fourier transform (FFT). In the general case where the discretization grid is non-uniform, the FMM could be used for fast matvec. Below are the notations we used to report results of our experiments (timing and storage are in seconds and in GB, respectively):

\begin{itemize}
\item
$N$: problem size/matrix size/number of discretization points;
\item
$M = \bigO(N)$: number of partitions of the problem domain in \cref{e:dd};
\item
$S$: number of DOFs at the last level in the RS solver and that associated with \emph{one} subproblem in the CBD preconditioner ($2^d$ subproblems in total);
\item
$t_f$: time of factorization to construct a preconditioner;
\item
$m_f$: storage of a preconditioner;
\item
$t_s$: time of applying a preconditioner;
\item
$t_{pcg}$: total PCG time for solving \cref{e:axb} {(excluding $t_f$)};
\item
$n_{it}$: number of PCG iterations to reach a relative residual of $10^{-12}$.
\end{itemize}

All experiments were performed with MATLAB\textsuperscript{\textregistered} R2020a on an Intel Xeon Platinum 8280M\footnote{\url{https://frontera-portal.tacc.utexas.edu/user-guide/system/\#large-memory-nodes}} (``Cascade Lake") that has 2.1 TB of memory and 112 cores on four sockets (28 cores/socket). Results in \Cref{ss:2d} corresponding to solving problems in 2D were obtained using one thread, since intermediate matrices are relatively small that associated computation benefits little from multithreading. Results in \Cref{ss:3d} corresponding to solving problems in 3D were obtained using the default multithreading in MATLAB, where the maximum number of computational threads equals to the number of physical cores. 


In \Cref{ss:iter}, we show the convergence of the na\"ive CBD preconditioner, where subproblems are solved exactly (with round-off errors) without low-rank compression. In \Cref{ss:2d,ss:3d}, we show the practical performance of the CBD preconditioner described in \Cref{s:cbd}.

\subsection{Na\"ive CBD preconditioner} \label{ss:iter}

{We benchmarked the number of PCG iterations required by the na\"ive CBD preconditioner, where subproblems were factorized exactly (with round-off errors) using Cholesky factorization rather than the RS method.} 
For comparison, we fixed the number of subdomains $D=2^d$ in the Jacobi preconditioner and the Schwarz preconditioner to be the same as in the CBD preconditioner. Recall that the domain decomposition associated with the block Jacobi preconditioner is non-overlapping but those with the other two preconditioners are overlapping. As \cref{t:it_m48} shows, the number of PCG iterations was almost constant with the two preconditioners based on overlapping domain decompositions. By contrast, the Jacobi preconditioner led to increasing PCG iterations as the problem size increased.

In addition, the number of PCG iterations required by the Schwarz method and the na\"ive CBD preconditioner were almost the same. {For the (practical) CBD preconditioner, we apply the RS method leveraging low-rank compression to every subproblem, which leads to efficient running time and slightly higher PCG iterations. As \cref{rmk:schwarz} mentioned, we can also apply the RS method to speedup the Schwarz preconditioner, but it will not be as efficient as the (practical) CBD preconditioner.}


\begin{table}
    \centering \small 
    \caption{\em Number of PCG iterations $n_{it}$ with the Jacobi preconditioner, the Schwarz preconditioner and the CBD preconditioner (without low-rank compression) in 2D (left) and in 3D (right). The number of partitions $M=D=2^d$ is fixed for the block Jacobi and the Schwarz preconditioners. For the CBD preconditioner, we used $M=N/16$ and $M=N/8$ in 2D and in 3D, respectively.}
    \label{t:it_m48}
    \begin{tabular}{ccccc} 
    \toprule
      $N$   &  $D$ & Jacobi & Schwarz & CBD \\ \midrule 
      $16^2$ & $4$ & 34 & 18  & 18 \\
      $32^2$ & $4$ & 50 & 19  & 19 \\
      $64^2$ & $4$ & 72 &  19 & 20 \\
      $128^2$ & $4$ & 102 &  20 & 20 \\
    \bottomrule
    \end{tabular}
    \quad\quad
    \begin{tabular}{ccccc} 
    \toprule
      $N$   &  $D$ & Jacobi & Schwarz & CBD \\ \midrule 
      $8^3$ & $8$ & 33 & 26  & 27 \\
      $16^3$ & $8$ & 50 & 27  & 27 \\
      $32^3$ & $8$ & 72 &  29 & 29 \\
    \bottomrule
    \end{tabular}
\end{table}

\subsection{Two dimensions} \label{ss:2d}

\begin{table}
    \caption{\em 2D results, where the problem size $N$ is fixed and the number of non-overlapping partitions $M$ varies. The matrix is defined in \cref{e:a_2d}.}
    \begin{subtable}[h]{\textwidth}
    \centering
    \begin{tabular}{cccccccc} \toprule
        $N$ & $M$ & $S$ & $t_f$ & $m_f$ & $t_s$ & $n_{it}$ & $t_{pcg}$  \\ \midrule 
%
$256^2$ & $8^2$ & 1048 & \num{1.21e+02} & \num{3.50e-01} & \num{2.94e-01} & 6 & \num{3.22e+00} \\ 
$256^2$ & $16^2$ & 1043 & \num{1.43e+01} & \num{1.37e-01} & \num{1.20e-01} & 7 & \num{1.17e+00} \\ 
\rowcolor{Gray}
$256^2$ & $32^2$ & 1048 & \num{7.68e+00} & \num{8.73e-02} & \num{1.30e-01} & 6 & \num{1.07e+00} \\ 
$256^2$ & $64^2$ & 1048 & \num{8.62e+00} & \num{7.98e-02} & \num{3.04e-01} & 6 & \num{2.19e+00} \\ 
$256^2$ & $128^2$ & 1047 & \num{1.20e+01} & \num{7.98e-02} & \num{9.40e-01} & 6 & \num{5.94e+00} \\ 
    \bottomrule
    \end{tabular}
    \caption{\em RS preconditioner based on the domain partitioning \cref{e:dd}.}
    \label{t:rs_strong_2d}
    \end{subtable}
    \begin{subtable}[h]{\textwidth}
    \centering
    \begin{tabular}{cccccccc} \toprule
    $N$ & $M$ & $S$ & $t_f$ & $m_f$ & $t_s$ & $n_{it}$ & $t_{pcg}$  \\ \midrule 
%
$256^2$ & $8^2$ & $64$ & \num{4.76e+00} & \num{3.39e-01} & \num{1.73e-01} & 26 & \num{5.71e+00} \\ 
$256^2$ & $16^2$ & $114$ & \num{2.08e+00} & \num{1.13e-01} & \num{8.18e-02} & 26 & \num{3.10e+00} \\ 
\rowcolor{Gray}
$256^2$ & $32^2$ & $213$ & \num{1.84e+00} & \num{5.53e-02} & \num{1.07e-01} & 24 & \num{3.76e+00} \\ 
$256^2$ & $64^2$ & $407$ & \num{5.20e+00} & \num{6.86e-02} & \num{3.27e-01} & 22 & \num{8.47e+00} \\ 
    \bottomrule
    \end{tabular}
    \caption{\em CBD preconditioner based on the overlapping domain decomposition \cref{e:second} (four subdomains).}
    \label{t:cbd_strong_2d}
    \end{subtable}
\end{table}
%
%

\begin{table}
    \caption{\em 2D results, where the problem size per partition, i.e., $N/M$, is fixed. The matrix is defined in \cref{e:a_2d}.}
    \label{t:weak_2d}    
    \begin{subtable}[h]{\textwidth}
    \centering
    \begin{tabular}{cccccccc} \toprule
        $N$ & $M$ & $S$ & $t_f$ & $m_f$ & $t_s$ & $n_{it}$ & $t_{pcg}$  \\ \midrule 
%
$512^2$ & $64^2$ & 2072 & \num{5.64e+01} & \num{4.25e-01} & \num{4.42e-01} & 7 & \num{4.22e+00} \\ 
$1024^2$ & $128^2$ & 4112 & \num{4.57e+02} & \num{2.01e+00} & \num{1.84e+00} & 8 & \num{2.01e+01} \\ 
$2048^2$ & $256^2$ & 5713 & \num{1.67e+03} & \num{7.32e+00} & \num{7.40e+00} & 15 & \num{1.34e+02} \\ 
$4096^2$ & $512^2$ & 8897 & \num{8.05e+03} & \num{2.88e+01} & \num{3.17e+01} & 19 & \num{8.22e+02} \\ 
\rowcolor{Gray}
$8192^2$ & $1024^2$ & 15303 & \num{3.88e+04} & \num{1.05e+02} & \num{1.51e+02} & 27 & \num{5.44e+03} \\ 
$16384^2$ & - & - & - & - & - & - & -  \\
    \bottomrule
    \end{tabular}
    \caption{\em RS preconditioner based on the domain partitioning \cref{e:dd}.}
    \label{t:rs_weak_2d}    
    \end{subtable}
    \begin{subtable}[h]{\textwidth}
    \centering 
    \begin{tabular}{cccccccc} \toprule
    $N$ & $M$ & $S$ & $t_f$ & $m_f$ & $t_s$ & $n_{it}$ & $t_{pcg}$  \\ \midrule 
%
$512^2$ & $64^2$ & $338$ & \num{8.94e+00} & \num{2.27e-01} & \num{4.37e-01} & 24 & \num{1.41e+01} \\ 
$1024^2$ & $128^2$ & $568$ & \num{3.55e+01} & \num{9.09e-01} & \num{1.77e+00} & 25 & \num{5.97e+01} \\ 
$2048^2$ & $256^2$ & $1052$ & \num{1.56e+02} & \num{3.64e+00} & \num{7.34e+00} & 25 & \num{2.31e+02} \\ 
$4096^2$ & $512^2$ & $2066$ & \num{7.99e+02} & \num{1.48e+01} & \num{3.58e+01} & $25^*$ & \num{1.27e+03} \\ 
\rowcolor{Gray}
$8192^2$ & $1024^2$ & $4109$ & \num{3.91e+03} & \num{5.92e+01} & \num{1.80e+02} & $31^*$ & \num{7.52e+03} \\
$16384^2$ & $2048^2$ & $6759$ & \num{2.03e+04} & \num{2.37e+02} & \num{8.94e+02} & $28^*$ & \num{3.49e+04} \\ 
    \bottomrule
    \end{tabular}
    \caption{\em CBD preconditioner based on the overlapping domain decomposition \cref{e:second} (four subdomains). $^*$PCG stagnated at relative residuals \num{1.13e-12}, \num{1.71e-12}, and \num{3.99e-12}, respectively. }
    \label{t:cbd_weak_2d}
    \end{subtable}
\end{table}



\cref{t:rs_strong_2d,t:cbd_strong_2d} show results of experiments with a fixed problem size but different numbers of partitions. Observe that $S$, the number of DOFs at the last level, is almost a constant for the RS method (see \cref{e:s}), whereas $S=\bigO(M^{1/2})$ for the CBD preconditioner (\cref{th:sk}). Notice that $S$, a deciding factor of the computational efficiency of both methods, is much smaller with the CBD preconditioner.

Results in \cref{t:rs_strong_2d,t:cbd_strong_2d} also indicate appropriate $M$ for the two methods. In \cref{t:rs_strong_2d}, results are similar between $M=32^2$ and $M=64^2$; so we chose $N/M=64$ in the RS solver for large problem sizes as done in~\cite{ho2016hierarchical,minden2017recursive}. 
In \cref{t:cbd_strong_2d}, $M=32^2$ led to half of the factorization time and the memory footprint than $M=16^2$; so  we also chose $N/M=64$ in the CBD preconditioner as well. 

\cref{t:rs_weak_2d} shows results for solving large problem sizes with the RS method, where the factorization time was always an order of magnitude larger than the PCG time. 
Due to an increasing condition number of the discretized integral operator defined in \cref{e:a_2d}, we observe that $S$  increased slower than the predicted $\bigO(N^{1/2})$ scaling; the factorization time $t_f$ scaled slower than the predicted $\bigO(N^{3/2})$ scaling, and the number of PCG iterations $n_{it}$ increased. 
(This phenomenon is known in~\cite{ho2016hierarchical}; see Table 5.3, 3rd row when $\varepsilon=\num{1e-3}$.)
The expected behaviors such as $S=\bigO(N^{1/2})$ can be obtained by employing  larger ranks in \cref{e:id} through either decreasing $\varepsilon$ in \cref{e:id_err} or reducing the radius of the proxy surface. Both require extra computation and  would lead to  longer factorization time.

To compare with the RS solver, \cref{t:cbd_weak_2d} shows corresponding results of the CBD preconditioner: (1) The number of DOFs at the last level $S$ (for one subproblem) was much smaller. As a consequence, the factorization time $t_f$ of the CBD preconditioner was an order of magnitude smaller. (2) the storage $m_f$ of the CBD preconditioner was approximately half of that required by the RS solver. On a computer with 2.1 TB memory,  the CBD preconditioner could be used to solve problems as large as $N=16\,384^2 \approx 2.7\times 10^8$, for which the RS solver ran out of memory. (3) The CBD preconditioner required more iterations than the RS solver. Overall, the total running time of the CBD preconditioner was approximately a quarter of that taken by the RS solver.

In the CBD preconditioner, $S$, the number of DOFs at the last level, increased as $\bigO(N^{1/2})$ until $N$ reached $16384^2$, indicating that the ill-conditioning effect associated with the RS solver was mitigated. However, ill-conditioning still affected the convergence of the CBD preconditioner, preventing the PCG from reaching the relative residual \num{1e-12} when $N \ge 4096^2$. 
%
{For environments where many right-hand sides need to be solved, the break-even point of the two preconditioners is $(t_f^{\text{RS}} - t_f^{\text{CBD}}) / (t_{pcg}^{\text{CBD}} - t_{pcg}^{\text{RS}})$, which evaluates to be about 16 for $N=8192^2$ in \cref{t:weak_2d}.}

\subsection{Three dimensions} \label{ss:3d}

We first show results of experiments with a fixed problem size but different numbers of partitions in \cref{t:rs_strong_3d,t:cbd_strong_3d}. Again, we observe results predicted by \cref{e:s,th:sk}: $S$ depended on $M$ but not on $N$ in the CBD preconditioner, whereas $S$ was independent of $M$ in the RS method. To solve large problems, we chose $N/M=8^3=512$ for both methods. Notice that this choice for the RS method is different from that in~\cite{ho2016hierarchical} because we used multithreading, which handled large matrices efficiently.

\cref{t:rs_weak_3d} shows results of the RS solver for large problem sizes, where the factorization time was at least two orders of magnitude longer than the PCG time. Observe that the numerical results basically match what \cref{e:s,th:rs} predict: $S = \bigO(N^{2/3})$, $t_f = \bigO(N^{2})$, and $m_f=t_s=\bigO(N^{4/3})$. In addition, the number of PCG iterations was almost constant.
It is also clear that the factorization time is prohibitive for a large problem size.

To compare with the RS solver, \cref{t:cbd_weak_3d} shows results of the CBD preconditioner, where $S$, the number of DOFs at the last level (for one subproblem), was at least an order of magnitude smaller. As a result, the CBD preconditioner required much less factorization time and storage. For example, compare the RS method to the CBD preconditioner for $N=128^3$: (1) the factorization time of the CBD preconditioner was approximately $50 \times$ smaller; (2) the storage of the CBD preconditioner was approximately $7 \times$ smaller; and (3) both methods required approximately the \emph{same} PCG time (CBD took more iterations but every iteration was faster). Empirically, the factorization time and the storage of the CBD preconditioner both scaled closely to $\bigO(N)$. This indicates that the estimates in \cref{th:sk,th:cbd} may be too pessimistic.

{For environments where many right-hand sides need to be solved, the break-even point of the two preconditioners is $(t_f^{\text{RS}} - t_f^{\text{CBD}}) / (t_{pcg}^{\text{CBD}} - t_{pcg}^{\text{RS}})$, which evaluates to be about 13 for $N=64^3$ and 2500 for $N=128^3$ in \cref{t:weak_3d}.}


\begin{table}
    \caption{\em 3D results, where the problem size $N$ is fixed and the number of non-overlapping partitions $M$ varies. The matrix is defined in \cref{e:a_3d}.}
    \begin{subtable}[h]{\textwidth}
    \centering 
    \begin{tabular}{cccccccc} \toprule
        $N$ & $M$  & $S$ & $t_f$ & $m_f$ & $t_s$ & $n_{it}$ & $t_{pcg}$  \\ \midrule 
%
\rowcolor{Gray}
$32^3$ & $4^3$ & 5987 & \num{1.33e+01} & \num{3.97e-01} & \num{1.54e-01} & 5 & \num{8.55e-01} \\ 
$32^3$ & $8^3$ & 5982 & \num{1.60e+01} & \num{3.77e-01} & \num{1.92e-01} & 5 & \num{1.04e+00} \\ 
$32^3$ & $16^3$ & 5990 & \num{1.83e+01} & \num{3.78e-01} & \num{3.65e-01} & 5 & \num{1.91e+00} \\ 
    \bottomrule
    \end{tabular}
    \caption{\em RS preconditioner based on the domain partitioning \cref{e:dd}.}
    \label{t:rs_strong_3d}
    \end{subtable}
    \begin{subtable}[h]{\textwidth}	
    \centering 
    \begin{tabular}{cccccccc} \toprule
    $N$ & $M$  & $S$ & $t_f$ & $m_f$ & $t_s$ & $n_{it}$ & $t_{pcg}$  \\ \midrule 
%
\rowcolor{Gray}
$32^3$ & $4^3$ & $527$ & \num{3.45e+00} & \num{2.24e-01} & \num{1.22e-01} & 33 & \num{4.83e+00} \\ 
$32^3$ & $8^3$ & $1358$ & \num{1.01e+01} & \num{2.91e-01} & \num{1.53e-01} & 32 & \num{5.19e+00} \\
$32^3$ & $16^3$ & $4120$ & \num{6.17e+01} & \num{1.65e+00} & \num{1.02e+00} & 28 & \num{2.88e+01} \\ 
%
%
    \bottomrule
    \end{tabular}
    \caption{\em CBD preconditioner based on the overlapping domain decomposition \cref{e:second} (eight subdomains).}
    \label{t:cbd_strong_3d}
    \end{subtable}
\end{table}

\begin{table}
    \caption{\em 3D results, where the problem size per partition, i.e., $N/M$, is fixed. The matrix is defined in \cref{e:a_3d}.}
    \label{t:weak_3d}
    \begin{subtable}[h]{\textwidth}
    \centering 
    \begin{tabular}{cccccccc} \toprule
        $N$ & $M$  & $S$ & $t_f$ & $m_f$ & $t_s$ & $n_{it}$ & $t_{pcg}$  \\ \midrule 
%
$32^3$ & $4^3$ & 5987 & \num{1.33e+01} & \num{3.97e-01} & \num{1.54e-01} & 5 & \num{8.55e-01} \\ 
$64^3$ & $8^3$ & 24081 & \num{4.32e+02} & \num{7.60e+00} & \num{2.57e+00} & 6 & \num{1.60e+01} \\
\rowcolor{Gray}
$128^3$ & $16^3$ & 97073 & \num{1.28e+04} & \num{1.40e+02} & \num{5.63e+01} & 7 & \num{3.99e+02} \\ 
$256^3$ & - & - & - & - & - & - & -  \\
    \bottomrule
    \end{tabular}
    \caption{\em RS preconditioner based on the domain partitioning \cref{e:dd}.}
    \label{t:rs_weak_3d}    
    \end{subtable}
    \begin{subtable}[h]{\textwidth}
    \centering 
    \begin{tabular}{cccccccc} \toprule
    $N$ & $M$  & $S$ & $t_f$ & $m_f$ & $t_s$ & $n_{it}$ & $t_{pcg}$  \\ \midrule 
$32^3$ & $4^3$ & $527$ & \num{3.31e+00} & \num{2.24e-01} & \num{1.23e-01} & 33 & \num{4.44e+00} \\ 
$64^3$ & $8^3$ & $1205$ & \num{2.68e+01} & \num{2.15e+00} & \num{1.25e+00} & 35 & \num{4.54e+01} \\
\rowcolor{Gray}
$128^3$ & $16^3$ & $3560$ & \num{2.60e+02} & \num{1.92e+01} & \num{1.13e+01} & 35 & \num{4.04e+02} \\
$256^3$ & $32^3$ & $10915$ & \num{2.62e+03} & \num{1.65e+02} & \num{1.90e+02} & 35 & \num{6.77e+03} \\
    \bottomrule
    \end{tabular}
    \caption{\em CBD preconditioner based on the overlapping domain decomposition \cref{e:second} (eight subdomains).}
    \label{t:cbd_weak_3d}
    \end{subtable}
\end{table}

\begin{remark}[Interface size]
{It seem intuitive that the smallest eigenvalue will increase when the interface size becomes larger. Numerical experiments on our regular problems support this; see \cref{t:d}. However, the benefit of using large interface size, which leads to smaller number of iterations, is usually tiny compared to the extra computational cost for constructing the preconditioner. For results of our new preconditioner reported in the paper, the interface size is $d=1$, i.e., every extended partition contains only one extra layer of grid points (see \cref{sf:extension}). In practice, this choice outperforms larger values of $d$, especially for 3D problems. The reason is that the  computation of the ID decomposition in \cref{e:id},  a significant portion of the total running time, increases rapidly with $d$, whereas the reduction of iteration numbers is very small.}

\begin{table}
    \caption{\em Interface size $d$ and the smallest eigenvalue $\lambda_{\min}$ of the preconditioned matrix in 2D and in 3D.}
    \label{t:d}
    \centering
    \begin{tabular}{cc|cc} 
    \toprule
      \multicolumn{2}{c|}{$N=64^2, D = 4^2$}  & \multicolumn{2}{c}{$N = 32^3, D = 4^3$}  \\
      $d$ & $\lambda_{\min}$ & $d$ & $\lambda_{\min}$  \\ \midrule 
	1 & 0.8685 & 1 & 0.9094 \\
	2 & 0.9045 & 2 & 0.9624 \\
	3 & 0.9183 & 3 & 0.9855 \\ 
    \bottomrule
    \end{tabular}
\end{table}

\end{remark}

\section{Conclusions} \label{s:end}


We introduce the CBD for constructing an overlapping domain decomposition, where  the number of subdomains is independent of the problem size, and every subdomain consists of spatially distant regions. The associated CBD (single-level additive Schwarz) preconditioner led to $\bigO(1)$ PCG iterations for solving SPD linear systems, which arises from the discretization of first-kind Fredholm integral equations for Laplace's equation in 2D and 3D. We apply the RS factorization to construct the CBD preconditioner efficiently, and our numerical results show that the new preconditioner is much more efficient than the original RS method, especially for solving problems in 3D.
Two future research directions include
\begin{itemize}
\item
Incorporating other FDSs for subproblems in the CBD preconditioner. Some candidates include the HIF~\cite{ho2016hierarchical}, the RS-S~\cite{minden2017recursive,sushnikova2022fmm}, and the IFMM~\cite{ambikasaran2014inverse,coulier2017inverse,takahashi2020parallelization}, all of which  have been shown to achieve  quasi-linear complexities (under mild assumptions) for solving problems in both 2D and  3D.



\item
Extending our approach to solving SPD linear systems arising from machine learning and data science, where the coefficient matrix (a.k.a., kernel matrix) is generated by positive definite functions~\cite{wendland2004scattered}. When the underlying data set lies in a low-dimensional space~\cite{wang2021pbbfmm3d} or even in a high-dimensional space for some applications~\cite{chen2021fast}, the coefficient matrix also has the hierarchical low-rank structure we exploit.

\end{itemize}



\appendix

\section{Proof of \cref{th:jacobi}} \label{s:app}

\begin{proof}
Write $T^{-1}_{\text{Jacobi}} A = P_1+P_2$, where $P_i$  is defined in \cref{e:pi}. Rewrite \cref{e:eig} as	
\begin{align*}
(P_1+P_2) (x_1+x_2) &= \lambda (x_1+x_2) \\
\stackrel{\cref{l:subdomain}}{\longrightarrow} \quad P_1 x_2 + P_2 x_1 &= (\lambda-1) (x_1+x_2). 
\end{align*}
Applying projections $R_1 R_1^\top$ and $R_2 R_2^\top$ on both sides, we obatin
\begin{equation} \label{e:px}
\left\{
\begin{array}{c}
P_1 x_2 = (\lambda-1) x_1, \\
P_2 x_1 = (\lambda-1) x_2.
\end{array}
\right.
\end{equation}

Therefore, we know that 
\begin{align*}
(P_1+P_2) (x_1-x_2) 
& \stackrel{\cref{l:subdomain}}{=} (x_1-x_2) + P_2 x_1 - P_1 x_2   \\
& \stackrel{\cref{e:px}}{=} (x_1-x_2) - (1-\lambda) x_2  + (1-\lambda) x_1  \\
&= (2-\lambda) (x_1-x_2).
\end{align*}
\end{proof}


\section{{Numerical results for Schwarz preconditioner}} \label{result:schwarz}

\begin{table}
    \centering \small 
    \caption{\em Schwarz preconditioner in 2D: $N/M$ fixed. The matrix is defined in \cref{e:a_2d}. (Compare to \cref{t:weak_2d}.) $^*$PCG stagnated at relative residuals \num{1.31e-12} and \num{1.47e-12}, respectively.}
    \label{t:schwarz_weak_2d}
    \begin{tabular}{ccccccc} \toprule
        $N$   &  $M$ & $t_f$ & $m_f$ & $t_s$ & $n_{it}$ & $t_{pcg}$  \\ \midrule 
        $512^2$ & $16^2$ & \num{9.53e+00} & \num{1.35e+00} & \num{6.18e-01} & 46 & \num{4.14e+01} \\
        $1024^2$ & $32^2$ & \num{3.53e+01} & \num{5.43e+00} & \num{3.86e+00} & 71 & \num{3.33e+02} \\ 
        $2048^2$ & $64^2$ & \num{1.41e+02} & \num{2.18e+01} & \num{1.81e+01} &$119^*$& \num{2.40e+03} \\ 
        $4096^2$ & $128^2$ & \num{5.63e+02} & \num{8.74e+01} & \num{7.72e+01} &$212^*$& \num{1.89e+04} \\ 
    \bottomrule
    \end{tabular}
\end{table}

\begin{table}
    \centering \small 
    \caption{\em Schwarz preconditioner in 3D: $N/M$ fixed. The matrix is defined in \cref{e:a_3d}. (Compare to \cref{t:weak_3d}.)}
    \label{t:schwarz_weak_3d}
    \begin{tabular}{ccccccc} \toprule
        $N$   &  $M$ & $t_f$ & $m_f$ & $t_s$ & $n_{it}$ & $t_{pcg}$  \\ \midrule 
        $32^3$ & $4^3$ & \num{1.03e+00} & \num{1.90e-01} & \num{8.99e-02} & 39 & \num{4.22e+00} \\ 
        $64^3$ & $8^3$ & \num{8.98e+00} & \num{1.77e+00} & \num{8.50e-01} & 52 & \num{5.72e+01} \\ 
        $128^3$ & $16^3$ & \num{7.76e+01} & \num{1.52e+01} & \num{8.79e+00} & 82 & \num{8.84e+02} \\ 
        $256^3$ & $32^3$ & \num{7.48e+02} & \num{1.26e+02} & \num{1.36e+02} & 156 & \num{2.62e+04} \\ 
    \bottomrule
    \end{tabular}
\end{table}


In this section, we show numerical results for the Schwarz preconditioner, where the number of partitions $M=\bigO(N)$. In \cref{t:schwarz_weak_2d,t:schwarz_weak_3d}, we chose $N/M=32^2$ and $N/M=8^3$, respectively, which empirically led to best performance.
In this case, the factorization time $t_f$, the storage $m_f$, and the application time of the Schwarz preconditioner $t_s$ all scaled approximately as $\bigO(N)$. However, the number of PCG iterations $n_{it}$ increased rapidly as the problem size increased. As a result, the overall running time of the Schwarz preconditioner exceeded that taken by the CBD preconditioner for large problems: $N \ge 2048^2$ in 2D or $N \ge 128^3$ in 3D (see running time of the CBD preconditioner in \cref{t:weak_2d,t:weak_3d}).

\bibliographystyle{siamplain}
\bibliography{biblio}

\end{document}